\theoremstyle{plain}
\newtheorem{prop}{Proposition}
\newtheorem{thm}[prop]{Theorem}
\newtheorem{lem}[prop]{Lemma}
\newtheorem{cor}[prop]{Corollary}
\theoremstyle{definition}
\newtheorem{ex}[prop]{Example}
\newtheorem{rem}[prop]{Remark}
\newenvironment{psmallmatrix}
  {\left(\begin{smallmatrix}}
  {\end{smallmatrix}\right)}
\begin{document}

\begin{filecontents*}{\jobname.bib}

@article {AI,
    AUTHOR = {Aoki, Hiroki and Ibukiyama, Tomoyoshi},
     TITLE = {Simple graded rings of {S}iegel modular forms, differential
              operators and {B}orcherds products},
   JOURNAL = {Internat. J. Math.},
  FJOURNAL = {International Journal of Mathematics},
    VOLUME = {16},
      YEAR = {2005},
    NUMBER = {3},
     PAGES = {249--279},
      ISSN = {0129-167X},
   MRCLASS = {11F46 (11F60)},
  MRNUMBER = {2130626},
MRREVIEWER = {Rainer Schulze-Pillot},
       DOI = {10.1142/S0129167X05002837},
       URL = {https://doi.org/10.1142/S0129167X05002837},
}

@article {B,
    AUTHOR = {Borcherds, Richard},
     TITLE = {Automorphic forms with singularities on {G}rassmannians},
   JOURNAL = {Invent. Math.},
  FJOURNAL = {Inventiones Mathematicae},
    VOLUME = {132},
      YEAR = {1998},
    NUMBER = {3},
     PAGES = {491--562},
      ISSN = {0020-9910},
   MRCLASS = {11F37 (11F22 14J28 17B67 57R57)},
  MRNUMBER = {1625724},
MRREVIEWER = {I. Dolgachev},
       DOI = {10.1007/s002220050232},
       URL = {http://dx.doi.org/10.1007/s002220050232},
}

@article {BPY,
    AUTHOR = {Breeding II, Jeffery and Poor, Cris and Yuen, David},
     TITLE = {Computations of spaces of paramodular forms of general level},
   JOURNAL = {J. Korean Math. Soc.},
  FJOURNAL = {Journal of the Korean Mathematical Society},
    VOLUME = {53},
      YEAR = {2016},
    NUMBER = {3},
     PAGES = {645--689},
      ISSN = {0304-9914},
   MRCLASS = {11F46 (11F50)},
  MRNUMBER = {3498287},
MRREVIEWER = {Lei Yang},
       DOI = {10.4134/JKMS.j150219},
       URL = {https://doi.org/10.4134/JKMS.j150219},
}

@article {B2,
    AUTHOR = {Bruinier, Jan},
     TITLE = {On the rank of {P}icard groups of modular varieties attached
              to orthogonal groups},
   JOURNAL = {Compositio Math.},
  FJOURNAL = {Compositio Mathematica},
    VOLUME = {133},
      YEAR = {2002},
    NUMBER = {1},
     PAGES = {49--63},
      ISSN = {0010-437X},
   MRCLASS = {11F55 (11G18 14C22 14G35)},
  MRNUMBER = {1918289},
MRREVIEWER = {Rainer Schulze-Pillot},
       DOI = {10.1023/A:1016357029843},
       URL = {http://dx.doi.org/10.1023/A:1016357029843},
}

@article {BB,
    AUTHOR = {Bruinier, Jan and Bundschuh, Michael},
     TITLE = {On {B}orcherds products associated with lattices of prime
              discriminant},
      NOTE = {Rankin memorial issues},
   JOURNAL = {Ramanujan J.},
  FJOURNAL = {Ramanujan Journal. An International Journal Devoted to the
              Areas of Mathematics Influenced by Ramanujan},
    VOLUME = {7},
      YEAR = {2003},
    NUMBER = {1-3},
     PAGES = {49--61},
      ISSN = {1382-4090},
   MRCLASS = {11F41 (11F55)},
  MRNUMBER = {2035791},
MRREVIEWER = {Matthew G. Boylan},
       DOI = {10.1023/A:1026222507219},
       URL = {http://dx.doi.org/10.1023/A:1026222507219},
}

@book {BG,
    AUTHOR = {Bruns, Winfried and Gubeladze, Joseph},
     TITLE = {Polytopes, rings, and {$K$}-theory},
    SERIES = {Springer Monographs in Mathematics},
 PUBLISHER = {Springer, Dordrecht},
      YEAR = {2009},
     PAGES = {xiv+461},
      ISBN = {978-0-387-76355-2},
   MRCLASS = {19-02 (11H06 13F45 14M25 52-01 52B20)},
  MRNUMBER = {2508056},
MRREVIEWER = {T. Oda},
       DOI = {10.1007/b105283},
       URL = {https://doi.org/10.1007/b105283},
}

@article {CG,
    AUTHOR = {Cl\'{e}ry, Fabien and Gritsenko, Valeri},
     TITLE = {Siegel modular forms of genus 2 with the simplest divisor},
   JOURNAL = {Proc. Lond. Math. Soc. (3)},
  FJOURNAL = {Proceedings of the London Mathematical Society. Third Series},
    VOLUME = {102},
      YEAR = {2011},
    NUMBER = {6},
     PAGES = {1024--1052},
      ISSN = {0024-6115},
   MRCLASS = {11F50 (11F22 11F23 11F46 17B67)},
  MRNUMBER = {2806099},
MRREVIEWER = {Rainer Schulze-Pillot},
       DOI = {10.1112/plms/pdq036},
       URL = {https://doi.org/10.1112/plms/pdq036},
}

@article {Dern2,
    AUTHOR = {Dern, Tobias},
     TITLE = {Paramodular forms of degree 2 and level 3},
   JOURNAL = {Comment. Math. Univ. St. Paul.},
  FJOURNAL = {Commentarii Mathematici Universitatis Sancti Pauli},
    VOLUME = {51},
      YEAR = {2002},
    NUMBER = {2},
     PAGES = {157--194},
      ISSN = {0010-258X},
   MRCLASS = {11F55},
  MRNUMBER = {1955170},
MRREVIEWER = {Jan Hendrik Bruinier},
}

@article {DK2,
    AUTHOR = {Dern, Tobias and Krieg, Aloys},
     TITLE = {The graded ring of {H}ermitian modular forms of degree 2 over
              {${\Bbb Q}(\sqrt{-2})$}},
   JOURNAL = {J. Number Theory},
  FJOURNAL = {Journal of Number Theory},
    VOLUME = {107},
      YEAR = {2004},
    NUMBER = {2},
     PAGES = {241--265},
      ISSN = {0022-314X},
   MRCLASS = {11F55 (11F27)},
  MRNUMBER = {2072387},
MRREVIEWER = {B. Ramakrishnan},
       DOI = {10.1016/j.jnt.2003.10.007},
       URL = {https://doi.org/10.1016/j.jnt.2003.10.007},
}

@unpublished {Dolg,
	AUTHOR = {Dolgachev, Igor},
	TITLE = {Endomorphisms of complex abelian varieties},
	NOTE = {Lecture notes, Milan, February},
	URL = {http://www.math.lsa.umich.edu/~idolga/MilanLect.pdf},
	YEAR = {2014},
}

@book {EZ,
    AUTHOR = {Eichler, Martin and Zagier, Don},
     TITLE = {The theory of {J}acobi forms},
    SERIES = {Progress in Mathematics},
    VOLUME = {55},
 PUBLISHER = {Birkh\"auser Boston, Inc., Boston, MA},
      YEAR = {1985},
     PAGES = {v+148},
      ISBN = {0-8176-3180-1},
   MRCLASS = {11F11 (11F27 11F99)},
  MRNUMBER = {781735},
MRREVIEWER = {Martin L. Karel},
       DOI = {10.1007/978-1-4684-9162-3},
       URL = {http://dx.doi.org/10.1007/978-1-4684-9162-3},
}

@article {FS,
    AUTHOR = {Freitag, Eberhard and Schneider, Volker},
     TITLE = {Bemerkung zu einem {S}atz von {J}. {I}gusa und {W}. {H}ammond},
   JOURNAL = {Math. Z.},
  FJOURNAL = {Mathematische Zeitschrift},
    VOLUME = {102},
      YEAR = {1967},
     PAGES = {9--16},
      ISSN = {0025-5874},
   MRCLASS = {10.21},
  MRNUMBER = {0220675},
MRREVIEWER = {H. Klingen},
       DOI = {10.1007/BF01110281},
       URL = {https://doi.org/10.1007/BF01110281},
}

@unpublished {G,
	AUTHOR = {Gehre, Dominic},
	TITLE = {Quaternionic modular forms of degree two over $\mathbb{Q}(-3,-1)$},
	NOTE = {Dissertation (advisers A. Krieg, S. Krau\ss har), RWTH Aachen},
	URL = {http://publications.rwth-aachen.de/record/211801/files/4403.pdf},
	YEAR = {2012},
}

@article {GH2,
    AUTHOR = {Gritsenko, Valeri and Hulek, Klaus},
     TITLE = {The modular form of the {B}arth-{N}ieto quintic},
   JOURNAL = {Internat. Math. Res. Notices},
  FJOURNAL = {International Mathematics Research Notices},
      YEAR = {1999},
    NUMBER = {17},
     PAGES = {915--937},
      ISSN = {1073-7928},
   MRCLASS = {14J15 (11F23 11F46 14J32)},
  MRNUMBER = {1717645},
MRREVIEWER = {Kazuyuki Hatada},
       DOI = {10.1155/S1073792899000483},
       URL = {https://doi.org/10.1155/S1073792899000483},
}

@incollection {GHS,
    AUTHOR = {Gritsenko, Valeri and Hulek, Klaus and Sankaran, Gregory},
     TITLE = {Moduli of {K}3 surfaces and irreducible symplectic manifolds},
 BOOKTITLE = {Handbook of moduli. {V}ol. {I}},
    SERIES = {Adv. Lect. Math. (ALM)},
    VOLUME = {24},
     PAGES = {459--526},
 PUBLISHER = {Int. Press, Somerville, MA},
      YEAR = {2013},
   MRCLASS = {14J10 (14J28 53C26)},
  MRNUMBER = {3184170},
MRREVIEWER = {Aristides I. Kontogeorgis},
}

@article {GN2,
    AUTHOR = {Gritsenko, Valeri and Nikulin, Viacheslav},
     TITLE = {Automorphic forms and {L}orentzian {K}ac-{M}oody algebras.
              {II}},
   JOURNAL = {Internat. J. Math.},
  FJOURNAL = {International Journal of Mathematics},
    VOLUME = {9},
      YEAR = {1998},
    NUMBER = {2},
     PAGES = {201--275},
      ISSN = {0129-167X},
   MRCLASS = {11F22 (11F46 11F55 14J28 17B67 81R10)},
  MRNUMBER = {1616929},
MRREVIEWER = {I. Dolgachev},
       DOI = {10.1142/S0129167X98000117},
       URL = {https://doi.org/10.1142/S0129167X98000117},
}

@article {Gu,
    AUTHOR = {Gundlach, Karl-Bernhard},
     TITLE = {Die {B}estimmung der {F}unktionen zur {H}ilbertschen
              {M}odulgruppe des {Z}ahlk\"orpers {$\mathbb{Q}(\sqrt{5})$}},
   JOURNAL = {Math. Ann.},
  FJOURNAL = {Mathematische Annalen},
    VOLUME = {152},
      YEAR = {1963},
     PAGES = {226--256},
      ISSN = {0025-5831},
   MRCLASS = {10.23},
  MRNUMBER = {0163887},
MRREVIEWER = {M. Rosati},
       DOI = {10.1007/BF01470882},
       URL = {https://doi.org/10.1007/BF01470882},
}

@article {H,
    AUTHOR = {Hammond, William},
     TITLE = {The modular groups of {H}ilbert and {S}iegel},
   JOURNAL = {Amer. J. Math.},
  FJOURNAL = {American Journal of Mathematics},
    VOLUME = {88},
      YEAR = {1966},
     PAGES = {497--516},
      ISSN = {0002-9327},
   MRCLASS = {10.23 (32.65)},
  MRNUMBER = {0201397},
MRREVIEWER = {K.-B. Gundlach},
       DOI = {10.2307/2373204},
       URL = {https://doi.org/10.2307/2373204},
}

@incollection {I,
    AUTHOR = {Ibukiyama, Tomoyoshi},
     TITLE = {On relations of dimensions of automorphic forms of {${\rm
              Sp}(2,{\bf R})$} and its compact twist {${\rm Sp}(2)$}. {I}},
 BOOKTITLE = {Automorphic forms and number theory ({S}endai, 1983)},
    SERIES = {Adv. Stud. Pure Math.},
    VOLUME = {7},
     PAGES = {7--30},
 PUBLISHER = {North-Holland, Amsterdam},
      YEAR = {1985},
   MRCLASS = {11F46 (11F72)},
  MRNUMBER = {876100},
MRREVIEWER = {Minking Eie},
}

@article {Ig2,
    AUTHOR = {Igusa, Jun-ichi},
     TITLE = {On {S}iegel modular forms of genus two},
   JOURNAL = {Amer. J. Math.},
  FJOURNAL = {American Journal of Mathematics},
    VOLUME = {84},
      YEAR = {1962},
     PAGES = {175--200},
      ISSN = {0002-9327},
   MRCLASS = {10.23 (32.65)},
  MRNUMBER = {0141643},
MRREVIEWER = {E. Grosswald},
       DOI = {10.2307/2372812},
       URL = {https://doi.org/10.2307/2372812},
}

@article {IO,
    AUTHOR = {Ibukiyama, Tomoyoshi and Onodera, Fumie},
     TITLE = {On the graded ring of modular forms of the {S}iegel
              paramodular group of level {$2$}},
   JOURNAL = {Abh. Math. Sem. Univ. Hamburg},
  FJOURNAL = {Abhandlungen aus dem Mathematischen Seminar der Universit\"at
              Hamburg},
    VOLUME = {67},
      YEAR = {1997},
     PAGES = {297--305},
      ISSN = {0025-5858},
   MRCLASS = {11F46 (14G35)},
  MRNUMBER = {1481544},
MRREVIEWER = {Riccardo Salvati-Manni},
       DOI = {10.1007/BF02940837},
       URL = {https://doi.org/10.1007/BF02940837},
}

@article {K,
    AUTHOR = {Kumar, Abhinav},
     TITLE = {Hilbert modular surfaces for square discriminants and elliptic
              subfields of genus 2 function fields},
   JOURNAL = {Res. Math. Sci.},
  FJOURNAL = {Research in the Mathematical Sciences},
    VOLUME = {2},
      YEAR = {2015},
     PAGES = {Art. 24, 46},
      ISSN = {2522-0144},
   MRCLASS = {11F41 (14H40 14J28)},
  MRNUMBER = {3427148},
MRREVIEWER = {Michael M. Schein},
       DOI = {10.1186/s40687-015-0042-9},
       URL = {https://doi.org/10.1186/s40687-015-0042-9},
}

@unpublished {M,
	AUTHOR = {Marschner, Axel},
	TITLE = {Paramodular forms of degree $2$ with particular emphasis on level $t=5$},
	NOTE = {Dissertation (advisers A. Krieg, J. M\"uller), RWTH Aachen},
	URL = {http://publications.rwth-aachen.de/record/59634/files/Marschner_Axel.pdf},
	YEAR = {2004},

}

@article {MM,
    AUTHOR = {Marschner, Axel and M\"{u}ller, J\"{u}rgen},
     TITLE = {On a certain algebra of higher modular forms},
   JOURNAL = {Algebra Colloq.},
  FJOURNAL = {Algebra Colloquium},
    VOLUME = {16},
      YEAR = {2009},
    NUMBER = {3},
     PAGES = {371--380},
      ISSN = {1005-3867},
   MRCLASS = {11F55 (13A50 13H10 13P10)},
  MRNUMBER = {2536763},
MRREVIEWER = {Howard Skogman},
       DOI = {10.1142/S1005386709000364},
       URL = {https://doi.org/10.1142/S1005386709000364},
}

@unpublished {PSY,
	AUTHOR = {Poor, Cris and Shurman, Jerry and Yuen, David},
	TITLE = {Finding all {B}orcherds product paramodular cusp forms of a given weight and level},
	NOTE = {Preprint},
	URL = {https://arxiv.org/abs/1803.11092},
	YEAR = {2018},
}

@article {PY,
    AUTHOR = {Poor, Cris and Yuen, David},
     TITLE = {Linear dependence among {S}iegel modular forms},
   JOURNAL = {Math. Ann.},
  FJOURNAL = {Mathematische Annalen},
    VOLUME = {318},
      YEAR = {2000},
    NUMBER = {2},
     PAGES = {205--234},
      ISSN = {0025-5831},
   MRCLASS = {11F46 (11F30 11H55)},
  MRNUMBER = {1795560},
MRREVIEWER = {B. Ramakrishnan},
       DOI = {10.1007/s002080000083},
       URL = {https://doi.org/10.1007/s002080000083},
}

@article {S,
    AUTHOR = {Stanley, Richard},
     TITLE = {Hilbert functions of graded algebras},
   JOURNAL = {Advances in Math.},
  FJOURNAL = {Advances in Mathematics},
    VOLUME = {28},
      YEAR = {1978},
    NUMBER = {1},
     PAGES = {57--83},
      ISSN = {0001-8708},
   MRCLASS = {13D10 (13H10)},
  MRNUMBER = {0485835},
MRREVIEWER = {Idun Reiten},
       DOI = {10.1016/0001-8708(78)90045-2},
       URL = {https://doi.org/10.1016/0001-8708(78)90045-2},
}

@article {SW,
    AUTHOR = {Sturmfels, Bernd and White, Neil},
     TITLE = {Computing combinatorial decompositions of rings},
   JOURNAL = {Combinatorica},
  FJOURNAL = {Combinatorica. An International Journal on Combinatorics and
              the Theory of Computing},
    VOLUME = {11},
      YEAR = {1991},
    NUMBER = {3},
     PAGES = {275--293},
      ISSN = {0209-9683},
   MRCLASS = {13P10 (13C14 68Q40)},
  MRNUMBER = {1122013},
MRREVIEWER = {Joachim Apel},
       DOI = {10.1007/BF01205079},
       URL = {https://doi.org/10.1007/BF01205079},
}

@unpublished {W,
	AUTHOR = {Williams, Brandon},
	TITLE = {Computing modular forms for the {W}eil representation},
	NOTE = {Dissertation (adviser R. Borcherds), UC Berkeley},
	YEAR = {2018},
}

\end{filecontents*}

\nocite{*}

\title{Graded rings of paramodular forms of levels $5$ and $7$}
\author{Brandon Williams }

\subjclass[2010]{11F27, 11F46}
\address{Fachbereich Mathematik \\ Technische Universit\"at Darmstadt \\ 64289 Darmstadt, Germany}

\email{bwilliams@mathematik.tu-darmstadt.de}

\maketitle

\begin{abstract} We compute generators and relations for the graded rings of paramodular forms of degree two and levels 5 and 7. The generators are expressed as quotients of Gritsenko lifts and Borcherds products. The computation is made possible by a characterization of modular forms on the Humbert surfaces of discriminant 4 that arise from paramodular forms by restriction.\end{abstract}

\section{Introduction}

Paramodular forms (of degree $2$, level $N \in \mathbb{N}$, and weight $k$) are holomorphic functions $f$ on the Siegel upper half-space $\mathbb{H}_2$ which transform  under the action of the paramodular group $$K(N) = \{M \in \mathrm{Sp}_4(\mathbb{Q}): \; \sigma_N^{-1}M\sigma_N \in \mathbb{Z}^{4 \times 4}\}, \; \; \sigma_N = \mathrm{diag}(1,1,1,N)$$ by $f(M \cdot \tau) = \mathrm{det}(c \tau + d)^k f(\tau)$ for $M = \begin{psmallmatrix} a & b \\ c & d \end{psmallmatrix} \in K(N)$ and $\tau \in \mathbb{H}_2$. \\

For a fixed level $N$, paramodular forms of all integral weights form a finitely generated graded ring $M_*(K(N))$ and a natural question is to ask for the structure of this ring. This yields information about the geometry of $X_{K(N)} = \overline{K(N) \backslash \mathbb{H}_2}$ (a moduli space for abelian surfaces with a polarization of type $(1,N)$) since the Baily-Borel isomorphism identifies $X_{K(N)}$ with the projective cone $\mathrm{Proj} \, M_*(K(N))$. Unfortunately these rings are difficult to compute. Besides Igusa's celebrated result for $N=1$ \cite{Ig2}, the ring structure is only understood for levels $N = 2$ (by Ibukiyama and Onodera \cite{IO}), $N=3$ (by Dern \cite{Dern2}) and $N = 4$ (the group $K(4)$ is conjugate to a congruence subgroup of $\mathrm{Sp}_4(\mathbb{Z})$ for which this is implicit in the work of Igusa \cite{Ig2}). Substantial progress in levels $N=5,7$ was made by Marschner \cite{M} and Gehre \cite{G} respectively but the problem has remained open for all levels $N \ge 5$. \\

A general approach to these problems is to use pullback maps to lower-dimensional modular varieties and the existence of modular forms with special divisors. The levels $N = 1,2,3,4$ admit a paramodular form which vanishes only on the $K(N)$-orbit of the diagonal (by \cite{GH2}). Any other paramodular form $f$ can be evaluated along the diagonal through the Witt operator, which we denote $$P_1 : M_*(K(N)) \longrightarrow M_*(\mathrm{SL}_2(\mathbb{Z}) \times \mathrm{SL}_2(\mathbb{Z})), \; \; P_1 f(\tau_1,\tau_2) = f ( \begin{psmallmatrix} \tau_1 & 0 \\ 0 & \tau_2 / N \end{psmallmatrix}).$$ One constructs a family of paramodular forms whose images under $P_1$ generate the ring of modular forms for $\mathrm{SL}_2(\mathbb{Z}) \times \mathrm{SL}_2(\mathbb{Z})$ with appropriate characters. Any paramodular form can then be reduced against this family to yield a form which vanishes on the diagonal and is therefore divisible by the distinguished form with a simple zero by the Koecher principle. In this way the graded ring can be computed by induction on the weight. \\

Unfortunately in higher levels $N \ge 5$ it is never possible to find a paramodular form which vanishes only along the diagonal (by Proposition 1.1 of \cite{GH2}) so this argument fails. (In fact, allowing congruence subgroups hardly improves the situation; see the classification in \cite{CG}. Some related computations of graded rings were given by Aoki and Ibukiyama in \cite{AI}.) One might instead try to reduce against paramodular forms whose divisor consists not only of the diagonal but also Humbert surfaces of larger discriminant $D > 1$ (which correspond to polarized abelian surfaces with special endomorphisms; see e.g. the lecture notes \cite{Dolg} for an introduction), the diagonal being a Humbert surface of discriminant one. Such paramodular forms can be realized as Borcherds products. There are instances in the literature where this approach has succeeded (e.g. \cite{DK2}). However the pullbacks (generalizations of the Witt operator) to Humbert surfaces other than the diagonal are more complicated to work with explicitly and are usually not surjective, with the image being rather difficult to determine in general. \\

In this note we take a closer look at the pullback $P_4$ to the Humbert surfaces of discriminant four for odd prime levels $N$. In particular we list $5$ candidate modular forms which one might expect to generate the image of symmetric paramodular forms under $P_4$. They do generate it in levels $N=5,7$ and this reduces the computation of the graded ring $M_*(K(N))$ to a logical puzzle of constructing paramodular forms which vanish to varying orders along certain Humbert surfaces. (A similar argument is outlined by Marschner and Gehre in \cite{M} and \cite{G} respectively, although we do not follow their suggestion to reduce along the Humbert surface of discriminant $9$. Instead we use Humbert surfaces of discriminants $1,4,5$ and $8$.)\\

We can prove the following theorems. Let $\mathcal{E}_k$ be the paramodular Eisenstein series of weight $k$.

\begin{thm} In level $N = 5$, there are Borcherds products $b_5,b_8,b_{12},b_{14}$, Gritsenko lifts $g_6,g_7,g_8,g_{10}$, and holomorphic quotient expressions $h_9,h_{10},h_{11},h_{12},h_{16}$ in them such that the graded ring $M_*(K(5))$ is minimally presented by the generators $$\mathcal{E}_4,b_5,\mathcal{E}_6,g_6,g_7,g_8,b_8,h_9,g_{10},h_{10},h_{11},b_{12},h_{12},b_{14},h_{16}$$ of weights $4,5,6,6,7,8,8,9,10,10,11,12,12,14,16$ and by $59$ relations in weights $13$ through $32$.
\end{thm}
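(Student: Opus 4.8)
The plan is to prove the theorem by reducing an arbitrary paramodular form to a polynomial in the fifteen listed candidates by induction on the weight, and then to pin down both the minimality of the generators and the relations by a Hilbert series computation. As a preliminary I would check that the listed objects are genuine holomorphic paramodular forms of the stated weights. The Eisenstein series $\mathcal{E}_4,\mathcal{E}_6$ and the Gritsenko lifts $g_6,\dots,g_{10}$ are holomorphic by construction, while the Borcherds products $b_5,b_8,b_{12},b_{14}$ are holomorphic with divisors supported on the Humbert surfaces of discriminants $1,4,5,8$ that can be read off from their input data. The one point needing care is the holomorphy of the quotients $h_9,\dots,h_{16}$: each is a ratio of a monomial in the other generators by a Borcherds product, so I must verify that the numerator vanishes along the divisor of the denominator to at least the order prescribed by that product. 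Since the relevant divisors are unions of Humbert surfaces, this is a finite check of vanishing orders carried out through the pullback $P_1$ to the diagonal, the pullback $P_4$ to the discriminant-$4$ surface, and the analogous restrictions for discriminants $5$ and $8$, certified if necessary by the Poor--Yuen restriction method.

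Next I would separate $M_*(K(5))$ into its plus and minus eigenspaces under the Fricke involution, which normalizes $K(5)$. Products of two antisymmetric forms are symmetric, and a suitable antisymmetric Borcherds product of minimal divisor exhibits the minus part as $b$ times the plus part, so that $M_*(K(5))$ becomes a free module of rank two over its symmetric subring with basis $\{1,b\}$ and $b^2$ symmetric. This reduces the whole problem to generating the symmetric forms together with identifying this single antisymmetric generator, so the core of the argument concerns symmetric paramodular forms.

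For the symmetric forms I would run the induction on weight. Given a symmetric form $f$ of weight $k$, I apply $P_4$ and invoke the characterization established earlier that the image of the symmetric paramodular forms on the Humbert surface of discriminant $4$ is spanned by the five candidate forms; in level $5$ these are in the span of the restrictions of the proposed generators. Subtracting an appropriate polynomial in the generators therefore makes $P_4 f = 0$, and by subtracting further terms I arrange simultaneously that $f$ vanishes along the Humbert surfaces of discriminants $5$ and $8$ to the orders forcing divisibility by the corresponding Borcherds products. The quotient then has strictly smaller weight and the induction closes. This is the step I expect to be the main obstacle: it is precisely the ``logical puzzle'' of the introduction, namely organizing the available generators so as to prescribe the vanishing orders of $f$ along all four Humbert surfaces of discriminants $1,4,5,8$, and its heart is the surjectivity of the candidate forms onto the entire $P_4$-image, which must be proved for level $5$ rather than merely expected.

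Finally I would settle minimality and the relations by a Hilbert series argument. From the dimension formula for paramodular forms of prime level one obtains $\sum_k \dim M_k(K(5))\, t^k$ as an explicit rational function, necessarily of the form $N(t)$ divided by $\prod_i (1-t^{w_i})$ for the generator weights $w_i = 4,5,6,6,7,8,8,9,10,10,11,12,12,14,16$. Comparing this series weight by weight against the series of the free graded algebra on these generators shows that each generator contributes a dimension unaccounted for by the lower-weight subalgebra, so none may be dropped and the presentation is minimal; the comparison also predicts the graded dimension of the ideal of relations. A Gröbner basis computation in the polynomial ring on the fifteen generators then confirms that this ideal is minimally generated by exactly $59$ elements occurring in weights $13$ through $32$, in agreement with the Hilbert series, which completes the proof.
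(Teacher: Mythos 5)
There is a genuine gap at the center of your argument: the claim that the Fricke involution splits $M_*(K(5))$ as a free module of rank two over its symmetric subring, with the antisymmetric part equal to a single Borcherds product $b$ times the symmetric part. No such $b$ can exist. By the lemma on forced vanishing orders, a $V_5$-antisymmetric form of \emph{even} weight must vanish (to odd order) on $\mathcal{H}_5$ and $\mathcal{H}_{20}$ but may be nonzero on $\mathcal{H}_1$ and $\mathcal{H}_4$, while a $V_5$-antisymmetric form of \emph{odd} weight must vanish on $\mathcal{H}_1$ and $\mathcal{H}_4$ but may be nonzero on $\mathcal{H}_5$ and $\mathcal{H}_{20}$; so the antisymmetric forms have no common divisor, and a hypothetical generator $b$ with empty divisor would be a nonvanishing form of weight zero, hence a (symmetric) constant. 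Concretely, among the fifteen generators the antisymmetric ones are $b_5,g_7,h_9,h_{11},b_{12},b_{14},h_{16}$, and the antisymmetric part genuinely needs several module generators over the symmetric subring. The paper instead runs three separate reductions keyed to weight parity and Fricke parity: symmetric even-weight forms against $b_8$ (matching pullbacks to $\mathcal{H}_4$ via $P_4$ and $A_*^{sym}$, to $\mathcal{H}_5$ via $P_5$ and Gundlach's structure theorem for $\mathbb{Q}(\sqrt5)$, and to the diagonal via $h_{10},h_{12}$), antisymmetric even-weight forms against $b_{12}$ (matching quasi-pullbacks to $\mathcal{H}_5$ with $b_{14},h_{16}$), and odd-weight forms against $b_5$ (matching quasi-pullbacks to the diagonal with $g_7,h_9,h_{11}$). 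Your single induction cannot be repaired without reintroducing essentially this case analysis.

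Two smaller points. First, for level $5$ the fourth relevant Humbert surface is $\mathcal{H}_{20}$, not $\mathcal{H}_8$: since $8$ is not a square modulo $20$, the surface $\mathcal{H}_8$ is empty for $K(5)$ (the discriminants $1,4,5,8$ mentioned in the introduction are pooled over both levels $5$ and $7$). Second, your minimality argument needs more than the Hilbert series of $M_*(K(5))$: to know how many new generators are forced in each weight you must compute the Hilbert function of the subalgebra generated by the lower-weight forms, which requires knowing the relations among them; in practice this means computing Fourier expansions to sufficient precision and pivoting, exactly as in the paper's final step, so the Gr\"obner computation you defer to is doing more work than your phrasing suggests.
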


\begin{thm} In level $N=7$, there are Borcherds products $b_4,b_6,b_7,b_9,b_{10},b_{12}^{sym},b_{12}^{anti},b_{13}$, Gritsenko lifts $g_5,g_6,g_7,g_8,g_{10}$, and holomorphic quotient expressions $h_8,h_9,h_{11},h_{14},h_{15},h_{16}$ in them such that the graded ring $M_*(K(7))$ is minimally presented by the generators $$\mathcal{E}_4,b_4,g_5,\mathcal{E}_6,b_6,g_6,b_7,g_7,g_8,h_8,b_9,h_9,b_{10},g_{10},h_{11},b_{12}^{sym},b_{12}^{anti},b_{13},h_{14},h_{15},h_{16}$$ of weights $4,4,5,6,6,6,7,7,8,8,9,9,10,10,11,12,12,13,14,15,16$ and by $144$ relations in weights $10$ through $32$.
\end{thm}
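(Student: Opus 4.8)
The plan is to follow the same template as the preceding level-$5$ theorem, adapting the bookkeeping of Humbert divisors to the arithmetic of $N=7$. Throughout I would organize the computation around the splitting of $M_*(K(7))$ into the $\pm$-eigenspaces of the Fricke (Atkin--Lehner) involution; this is forced by the appearance of both $b_{12}^{sym}$ and $b_{12}^{anti}$ among the generators. Antisymmetric forms automatically vanish on the locus fixed by the involution, and the antisymmetric part is a module over the symmetric subring, so it suffices to understand the symmetric subring together with this module structure.

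First I would construct all $21$ generators explicitly and record their divisors. The Gritsenko lifts $g_5,\dots,g_{10}$ arise from Jacobi forms of index $7$ via the additive lift \cite{EZ}, \cite{GN2}, so their Fourier--Jacobi expansions are immediate. The Borcherds products $b_4,\dots,b_{13}$ come from Borcherds' theorem \cite{B} applied to weakly holomorphic vector-valued modular forms for the Weil representation of the even lattice of signature $(2,3)$ attached to $K(7)$, computed as in \cite{W}; the principal parts of the inputs determine the divisors as explicit nonnegative combinations of the low-discriminant Humbert surfaces ($H_1$, $H_4$, and the further surfaces $H_5$, $H_8$ singled out in the introduction). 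Each quotient $h_k$ is then defined as a ratio of products of the $g$'s, $b$'s and Eisenstein series, and its holomorphy is verified by checking that the divisor of the numerator dominates that of the denominator along every Humbert surface and at the cusps, which reduces to the additivity of the already-computed Humbert multiplicities.

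The core of the argument is to show these forms generate $M_*(K(7))$, by induction on the weight. Given a symmetric form $f$ of weight $k$, its restriction $P_4 f$ lies in the image of $P_4$, which by the characterization of that image (the main tool of this paper) is spanned by the restrictions of the five candidate forms identified in the introduction; hence there is an element $g$ of the candidate subring with $P_4(f-g)=0$, so that $f-g$ vanishes on $H_4$. Using the Witt operator $P_1$ and the analogous pullbacks to $H_5$ and $H_8$, I would subtract further elements of the subring to arrange that $f-g$ vanishes to the orders matching the divisor of a chosen Borcherds product $b$; by the Koecher principle $b \mid (f-g)$, and $(f-g)/b$ is a paramodular form of strictly smaller weight to which the induction hypothesis applies. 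The antisymmetric case is reduced to the symmetric one by first dividing by a distinguished antisymmetric product. All divisibility and ``vanishing to prescribed order'' claims are made effective by the linear-dependence bounds of Poor--Yuen \cite{PY}, \cite{BPY}, which guarantee that finitely many Fourier coefficients determine a form of given weight and so reduce every verification to a finite, exact linear-algebra computation.

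With generation established, the presentation is obtained by comparing Hilbert series. The dimensions $\dim M_k(K(7))$ are known for all $k$ from Ibukiyama's formulas \cite{I} (equivalently from counting Jacobi forms), giving the target Hilbert series; I would map the weighted polynomial ring on the $21$ generators onto $M_*(K(7))$, compute a Gröbner basis of the kernel against enough Fourier coefficients, and confirm that the presented ring has exactly the target Hilbert series, which shows that the computed relations generate the full ideal. Minimality of the generators and of the $144$ relations in weights $10$ through $32$ is then read off from the graded Betti numbers, using the combinatorial decomposition and Hilbert-function techniques of \cite{SW}, \cite{S}, \cite{BG}. The main obstacle I anticipate is the inductive generation step: the divisor of each available Borcherds product is a fixed combination of several Humbert surfaces, so forcing $f-g$ to be divisible by one of them requires simultaneously controlling the vanishing orders along $H_1$, $H_4$, $H_5$ and $H_8$ --- the ``logical puzzle'' of the introduction --- and it is precisely here that the full list of $21$ generators, rather than a smaller set, turns out to be necessary.
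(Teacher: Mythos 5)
Your outline follows the same template as the paper: construct the Borcherds products and Gritsenko lifts, characterize the image of $P_4$, reduce by induction on the weight against a distinguished product in each symmetry/parity sector, and pin down the relations by matching Ibukiyama's Hilbert series. However, you explicitly defer the central difficulty --- ``forcing $f-g$ to be divisible by one of them requires simultaneously controlling the vanishing orders along $H_1$, $H_4$, $H_5$ and $H_8$'' --- as ``the main obstacle I anticipate,'' and this is precisely the content of the proof rather than a detail. The paper resolves it sector by sector: symmetric even-weight forms are reduced against $b_4$ (divisor $8\mathcal{H}_1+2\mathcal{H}_4$) using $P_4$ and $P_1$ only; antisymmetric even-weight forms against $b_{12}^{anti}$ (divisor $\mathcal{H}_8+\mathcal{H}_{28}$) using the pullback $P_8$ to Hilbert modular forms for $\mathbb{Q}(\sqrt 2)$ together with the forced vanishing on $\mathcal{H}_{28}$; and odd-weight forms against $b_7$ by matching pullbacks successively on $\mathcal{H}_8$, then $\mathcal{H}_4$, then $\mathcal{H}_1$. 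Each of these steps rests on facts your sketch does not supply: the parity constraints on vanishing orders along Humbert surfaces (odd weight forces odd order on $\mathcal{H}_1$ and $\mathcal{H}_4$, etc.), the \emph{exact} orders of vanishing of $g_5,g_6,g_7,g_8$ along the diagonal and along $\mathcal{H}_4$ (proved by contradiction arguments producing holomorphic forms of negative weight), and the observation that the Hecke Eisenstein series $\mathbf{E}_2$ is not a pullback, which dictates the decomposition of the target Hilbert modular rings by the parity of the exponent of $\mathbf{E}_2$. Without these, the induction does not close.

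Two further concrete issues. First, $\mathcal{H}_5$ is empty for $N=7$ (since $5$ is not a square mod $28$); the surfaces actually in play are $\mathcal{H}_1,\mathcal{H}_4,\mathcal{H}_8,\mathcal{H}_9,\mathcal{H}_{21},\mathcal{H}_{28}$, and the real-quadratic pullback you need is $P_8$ to $\mathbb{Q}(\sqrt 2)$, not anything involving discriminant $5$. Second, the inductive reduction in the paper naturally produces $26$ generators, not $21$: five of them ($h_9^{(2)},h_{11}^{(2)},h_{11}^{(3)},h_{13},h_{15}^{(2)}$) must afterwards be shown redundant by explicit linear identities found from Fourier expansions. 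Your remark that ``the full list of $21$ generators turns out to be necessary'' at the reduction step is therefore not how the count comes out; minimality is only recovered after this elimination, and the paper notes it seems unavoidable to verify it numerically.
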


The definitions of the forms $b_i,g_i,h_i$ are given in sections $5$ and $6$ below. The relations are listed in the ancillary files on arXiv. Fourier coefficients of the generators are available on the author's university webpage. \\

\textbf{Acknowledgments:} The computations in this note were done in Sage and Macaulay2. I also thank Jan Hendrik Bruinier and Aloys Krieg for helpful discussions. This work was supported by the LOEWE-Schwerpunkt Uniformized Structures in Arithmetic and Geometry.

\section{Notation}

$K(N)$ is the integral paramodular group of degree two and level $N$. $K(N)^+$ is the group generated by $K(N)$ and the Fricke involution $V_N$. $M_*(K(N)) = \oplus_{k=0}^{\infty} M_k(K(N))$ is the graded ring of paramodular forms. \\

$\mathbb{H} = \{\tau = x+iy: \, y > 0\}$ is the upper half-plane. $\mathbb{H}_2$ is the Siegel upper half-space of degree two; its elements are either also labeled $\tau$ or in matrix form $\begin{psmallmatrix} \tau & z \\ z & w \end{psmallmatrix}$. We write $q = e^{2\pi i \tau}$, $r = e^{2\pi i z}$, $s = e^{2\pi i w}$. When $z$ is the elliptic variable of a Jacobi form we write $\zeta = e^{2\pi i z}$. For $D \in \mathbb{N}$, $\mathcal{H}_D$ is the discriminant $D$ Humbert surface (the solutions of \emph{primitive} singular equations of discriminant $D$). \\

We denote by $G$ the group $$G = \{(M_1,M_2) \in \mathrm{SL}_2(\mathbb{Z}) \times \mathrm{SL}_2(\mathbb{Z}): \; M_1 \equiv M_2 \, (\text{mod} \, 2)\}$$ and by $A_*$ and $A_*^{sym}$ particular graded subrings of modular forms for $G$ which are defined in section 4. \\

If $r,s > 0$ then $II_{r,s}$ is the (unique up to isomorphism) even unimodular lattice of signature $(r,s)$. For an even lattice $(\Lambda,Q)$ and $N \in \mathbb{Z} \backslash \{0\}$, we denote by $\Lambda(N)$ the rescaled lattice $(\Lambda, N \cdot Q)$. \\

We use Eisenstein series for various groups. To reduce the risk of confusion we always let $E_k$ be the classical (elliptic) Eisenstein series for $\mathrm{SL}_2(\mathbb{Z})$; we let $\mathbf{E}_k$ be the Hecke Eisenstein series (a Hilbert modular form) for a real-quadratic field; and we let $\mathcal{E}_k$ be the paramodular Eisenstein series.

\section{Paramodular forms of degree two}

We continue the introduction of paramodular forms. For $N \in \mathbb{N}$, the \textbf{paramodular group} of level $N$ is the group $K(N)$ of symplectic matrices of the form $\begin{psmallmatrix} * & * & * & */N \\ * & * & * & */N \\ * & * & * & */N \\ N* & N* & N* & * \end{psmallmatrix}$ where $*$ represent integers. This acts on the upper half-space $\mathbb{H}_2$ in the usual way, i.e. for a block matrix $M = \begin{psmallmatrix} a & b \\ c & d \end{psmallmatrix} \in K(N)$ and $\tau \in \mathbb{H}_2$ we set $M \cdot \tau = (a\tau + b)(c \tau + d)^{-1}.$ \\

A \textbf{paramodular form} of weight $k \in \frac{1}{2}\mathbb{N}$ is a holomorphic function $F : \mathbb{H}_2 \rightarrow \mathbb{C}$ satisfying $F(M \cdot \tau) = \mathrm{det}(c \tau + d)^k  F(\tau)$ for all $M \in K(N)$. (The Koecher principle states that $F$ extends holomorphically to the boundary of $K(N) \backslash \mathbb{H}_2$ so we omit this from the definition.) The invariance of $F$ under the translations $T_b = \begin{psmallmatrix} 1 & 0 & b_1 & b_2 \\ 0 & 1 & b_2 & b_3/N \\ 0 & 0 & 1 & 0 \\ 0 & 0 & 0 & 1 \end{psmallmatrix},$ $b_1,b_2,b_3 \in \mathbb{Z}$ implies that $F$ is given by a Fourier series, which we write in the form $$F( \begin{psmallmatrix} \tau & z \\ z & w \end{psmallmatrix}) = \sum_{a,b,c \in \mathbb{Z}} \alpha(a,b,c) q^a r^b s^{Nc}, \; q = e^{2\pi i \tau}, \; r = e^{2\pi i z}, \; s = e^{2\pi i w}, \; \alpha(a,b,c) \in \mathbb{C},$$ and the Koecher principle can also be interpreted as the condition $\alpha(a,b,c) = 0$ unless $a,c \ge 0$ and $b^2 \le 4Nac$. \\

The paramodular group is normalized by an additional map called the \textbf{Fricke involution}: $$V_N : \begin{psmallmatrix} \tau & z \\ z & w \end{psmallmatrix} \mapsto \begin{psmallmatrix} Nw & -z \\ -z & \tau /N \end{psmallmatrix}.$$ (If $N = 1$ then this is already contained in $\mathrm{Sp}_4(\mathbb{Z})$.) An \textbf{extended paramodular form} is a paramodular form $F$ which is invariant under $V_N$, i.e. $F|V_N = F$ where $|$ is the usual slash operator. The extended paramodular group $\langle K(N), V_N \rangle$ will be denoted $K(N)^+$. (Note that in addition to $V_N$, $K(N)$ is also normalized by operators $V_d$ for Hall divisors $d \| N$ if $N$ is not prime, which are analogues of the Atkin-Lehner involutions.) \\

Some results for paramodular forms of degree two rely on their relationship to orthogonal modular forms so we recall this briefly. (See also the discussion in \cite{GN2}.) The space of real antisymmetric $(4 \times 4)$-matrices admits a nondegenerate quadratic form (the Pfaffian) of signature $(3,3)$ which is invariant under conjugation by $\mathrm{SL}_4(\mathbb{R})$, explicitly $$\mathrm{pf} \begin{psmallmatrix} 0 & a & b & c \\ -a & 0 & d & e \\ -b & -d & 0 & f \\ -c & -e & -f & 0 \end{psmallmatrix} = af - be + cd,$$ and this action by conjugation determines the Klein correspondence $\mathrm{SL}_4(\mathbb{R}) =\mathrm{Spin}(\mathrm{pf}).$ If we fix the matrix $J = \begin{psmallmatrix} 0 & 0 & 1 & 0 \\ 0 & 0 & 0 & 1 \\ -1 & 0 & 0 & 0 \\ 0 & -1 & 0 & 0 \end{psmallmatrix}$ with $\mathrm{pf}(J) < 0$ then the symplectic group $\mathrm{Sp}_4(\mathbb{R})$ consists exactly of those matrices which preserve the orthogonal complement $J^{\perp}$ under conjugation, and the Klein correspondence identifies $\mathrm{Sp}_4(\mathbb{R})$ with $\mathrm{Spin}(\mathrm{pf}|_{J^{\perp}}) = \mathrm{Spin}_{3,2}(\mathbb{R})$. Under this identification $K(N)^+$ embeds into the spin group of the even lattice $(\Lambda,N \cdot \mathrm{pf})$ where $\Lambda = \{M \in J^{\perp}: \, \sigma_N M \sigma_N \in \mathbb{Z}^{4 \times 4}\},$ $\sigma_N = \mathrm{diag}(1,1,1,N),$ which is isomorphic to $A_1(N) \oplus II_{2,2}$.  This allows orthogonal modular forms for $(\Lambda, N \cdot \mathrm{pf})$ to be interpreted as paramodular forms. \\

The orthogonal modular variety associated to any even lattice $(\Lambda,Q)$ of signature $(\ell,2)$ admits a natural construction of Heegner divisors. Through the Klein corresponence one obtains from these the \textbf{Humbert surfaces} on $K(N)^+ \backslash \mathbb{H}_2$. (See section 1.3 of \cite{GN2} for background, or the lecture notes \cite{Dolg}.) We use the convention that $\mathcal{H}_D$ is the union of rational quadratic divisors associated to \emph{primitive} lattice vectors of discriminant $D$. (Thus the Heegner divisors of e.g. \cite{B} correspond to $\cup_{r^2 | D} \mathcal{H}_{D/r^2}$.) The surface $\mathcal{H}_D$ has an irreducible component $\mathcal{H}_{D,b}$ for each pair $(\pm b)$ mod $2N$ for which $b^2 \equiv D \, (4N)$. If $a = \frac{b^2 - D}{4N}$ then $\mathcal{H}_{D,b}$ is represented by the surface $$\{\begin{psmallmatrix} \tau & z \\ z & w \end{psmallmatrix} \in \mathbb{H}_2: \; a\tau + bz + Nw = 0\}.$$ In particular if $N$ is a prime then $\mathcal{H}_D$ is either empty or irreducible. When $D=1$ it is useful to know that $\mathcal{H}_1$ is represented by the diagonal. By abuse of notation we also use $\mathcal{H}_D$ to mean the preimages in $\mathbb{H}_2$ and in $K(N) \backslash \mathbb{H}_2$. \\

Two important constructions of paramodular forms arise through the relationship to the orthogonal group and both are described in detail in \cite{GN2}. The first is the \textbf{Gritsenko lift}. Let $J_{k,N}$ denote the space of Jacobi forms of weight $k$ and index $N$. Recall that these are holomorphic functions $\phi : \mathbb{H} \times \mathbb{C} \rightarrow \mathbb{C}$ satisfying the transformations $$\phi\Big( \frac{a\tau + b}{c \tau + d}, \frac{z}{c\tau + d} \Big) = (c \tau + d)^k \mathbf{e}\Big(\frac{Ncz^2}{c \tau + d}\Big) \phi(\tau,z), \; M = \begin{psmallmatrix} a & b \\ c & d \end{psmallmatrix} \in \mathrm{SL}_2(\mathbb{Z})$$ and $$\phi(\tau,z+\lambda \tau + \mu) = \mathbf{e}(-N\lambda^2 \tau - N\lambda z) \phi(\tau,z), \; \; \lambda,\mu \in \mathbb{Z},$$ where we abbreviate $\mathbf{e}(x) = e^{2\pi i x}$, and in which the Fourier series $\phi(\tau,z) = \sum_{n,r \in \mathbb{Z}} \alpha(n,r) q^n \zeta^r$, $q = \mathbf{e}(\tau)$, $\zeta = \mathbf{e}(z)$ may have nonzero coefficients $\alpha(n,r)$ only when $r^2 / N \le 4n$. Additionally, $\phi$ is a cusp form if $\alpha(n,r) = 0$ when $r^2 / N = 4n$. \\

The Gritsenko lift is an additive map $\mathcal{G} : J_{*,N} \rightarrow M_*(K(N))$ which can be defined naturally either by means of Jacobi-Hecke operators or by the theta correspondence. Here we only recall how to compute it. Let $\phi(\tau,z) = \sum_{n,r} \alpha(n,r)q^n \zeta^r \in J_{k,N}$ be a Jacobi form of weight $k \ge 4$. Its Gritsenko lift is $$\mathcal{G}_{\phi}(\begin{psmallmatrix} \tau & z \\ z & w \end{psmallmatrix}) = -\frac{B_k}{2k} \alpha(0,0) (E_k(\tau) + E_k(Nw) - 1) + \sum_{\substack{a,c \ge 1 \\ b^2 \le 4Nac}} \sum_{d | \mathrm{gcd}(a,b,c)} d^{k-1} \alpha\left(\frac{ac}{d^2},\frac{b}{d}\right) q^a r^b s^{Nc},$$ where $E_k(\tau) =  1 - \frac{2k}{B_k} \sum_{n=1}^{\infty} \sigma_{k-1}(n) q^n$ is the scalar Eisenstein series of level $1$ if $k$ is even. (If $k$ is odd then $\alpha(0,0) = 0$ automatically and there is no need to define $E_k$.)

\begin{ex} The paramodular Eisenstein series $\mathcal{E}_k$ is a special case of the Gritsenko lift. Let $k$ be even and let $\phi = E_{k,N}$ be the Jacobi Eisenstein series of index $N$ (as in \cite{EZ}); then $\mathcal{E}_k = -\frac{2k}{B_k}\mathcal{G}_{\phi}$. In particular $\mathcal{E}_k$ is normalized such that its Fourier series has constant term $1$.
\end{ex}

The second construction of paramodular forms we will need is the \textbf{Borcherds lift} of \cite{B}. This is a multiplcative map which sends \emph{nearly-holomorphic} Jacobi forms (where a finite principal part is allowed) to extended paramodular forms with a character. The details appear in chapter 2 of \cite{GN2}. We mention here only that the divisor of a Borcherds lift is a linear combination of Humbert surfaces, and that the divisor, weight and character can be easily read off the principal part of the input Jacobi form. \\

The \textbf{Witt operator} is a restriction to the diagonal: $$P_1 : M_*(K(N)) \longrightarrow M_*(\mathrm{SL}_2(\mathbb{Z}) \times \mathrm{SL}_2(\mathbb{Z})), \;\; P_1 F(\tau_1,\tau_2) = F( \begin{psmallmatrix} \tau_1 & 0 \\ 0 & \tau_2 / N \end{psmallmatrix}).$$ That $P_1$ is well-defined is due to the embedding of groups $$\Phi_1 : \mathrm{SL}_2(\mathbb{Z}) \times \mathrm{SL}_2(\mathbb{Z}) \longrightarrow K(N), \; \; \left[ \begin{psmallmatrix} a_1 & b_1 \\ c_1 & d_1 \end{psmallmatrix}, \begin{psmallmatrix} a_2 & b_2 \\ c_2 & d_2 \end{psmallmatrix} \right] \mapsto \begin{psmallmatrix} a_1 & 0 & b_1 & 0 \\ 0 & a_2 & 0 & b_2 / N \\ c_1 & 0 & d_1 & 0 \\ 0 & Nc_2 & 0 & d_2 \end{psmallmatrix}.$$ One can check directly that $P_1 F(M \cdot (\tau_1,\tau_2)) = F(\Phi_1(M) \cdot \begin{psmallmatrix} \tau_1 & 0 \\ 0 & \tau_2 / N \end{psmallmatrix}) = (c_1 \tau_1 + d_1)^k (c_2 \tau_2 + d_2)^k P_1 F(\tau_1,\tau_2)$ for all $F \in M_k(K(N))$ and $M = \left[ \begin{psmallmatrix} a_1 & b_1 \\ c_1 & d_1 \end{psmallmatrix}, \begin{psmallmatrix} a_2 & b_2 \\ c_2 & d_2 \end{psmallmatrix} \right]$. \\

Satz 4 of \cite{FS} gives for any discriminant $D = a^2 + 4Nb^2$, $a,b \in \mathbb{Z}$ a similar embedding which allows one to restrict paramodular forms of level $N$ to (possibly degenerate, if $D$ is a square) Hilbert modular forms associated to the discriminant $D$. (By ``degenerate" Hilbert modular forms we mean modular forms for subgroups of $\mathrm{SL}_2(\mathbb{Z}) \times \mathrm{SL}_2(\mathbb{Z})$.) For now we focus on the following special case. Suppose $N$ is odd and let $G$ denote the subgroup $$G = \{(M_1,M_2) \in \mathrm{SL}_2(\mathbb{Z}) \times \mathrm{SL}_2(\mathbb{Z}): \; M_1 \equiv M_2 \, (\text{mod} \, 2)\}.$$ This embeds into $K(N)$ by the group homomorphism $$\Phi_4 : G \longrightarrow K(N), \; \; \left[ \begin{psmallmatrix} a_1 & b_1 \\ c_1 & d_1 \end{psmallmatrix}, \begin{psmallmatrix} a_2 & b_2 \\ c_2 & d_2 \end{psmallmatrix} \right] \mapsto \begin{psmallmatrix} a_1 & 0 & 2b_1 & b_1 \\ (a_1 - a_2)/2 & a_2 & b_1 & (b_2 + Nb_1) / 2N \\ (c_1 + Nc_2)/2 & -Nc_2 & d_1 & (d_1 - d_2)/2 \\ -Nc_2 & 2Nc_2 & 0 & d_2 \end{psmallmatrix}.$$ The embedding satisfies $$\Phi_4(M_1,M_2) \cdot \begin{psmallmatrix} 2\tau_1 & \tau_1 \\ \tau_1 & \tau_1/2 + \tau_2 / 2N \end{psmallmatrix} = \begin{psmallmatrix} 2 (M_1 \cdot \tau_1) & M_1 \cdot \tau_1 \\ M_1 \cdot \tau_1 & \frac{1}{2}(M_1 \cdot \tau_1) + \frac{1}{2N} (M_2 \cdot \tau_2) \end{psmallmatrix}$$ so we get an associated pullback map: $$P_4 : M_*(K(N)) \longrightarrow M_*(G), \; \; P_4 F(\tau_1,\tau_2) = F( \begin{psmallmatrix} 2\tau_1 & \tau_1 \\ \tau_1 & \tau_1/2 + \tau_2 / 2N \end{psmallmatrix}).$$ 

These definitions are natural in the interpretation of orthogonal groups (here, $G$ is essentially the orthogonal group of the lattice $A_1 \oplus A_1(-1) \oplus II_{1,1}$). Note that if $N$ is prime then $\mathcal{H}_4$ is the orbit of $$\{\begin{psmallmatrix} 2\tau_1 & \tau_1 \\ \tau_1 & \tau_1/2 + \tau_2/2N \end{psmallmatrix}: \, \tau_1,\tau_2 \in \mathbb{H}\} = \{\begin{psmallmatrix} \tau & z \\ z & w \end{psmallmatrix} \in \mathbb{H}_2: \; \tau = 2z\}$$ under $K(N)^+$. Thus a symmetric or antisymmetric paramodular form (e.g. a Borcherds product or a Gritsenko lift) $F$ for which $P_4 F = 0$ vanishes everywhere on $\mathcal{H}_4$. \\ 

The behavior of the map $P_4$ under the involution $V_N$ is easy to describe:

\begin{lem} Let $F$ be a paramodular form of odd level $N$. Then $P_4 (F | V_N)(\tau_1,\tau_2) = P_4F(\tau_2,\tau_1).$
\end{lem}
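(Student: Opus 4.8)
The plan is to reduce the identity to an explicit computation on the slice of $\mathbb{H}_2$ that parametrizes $\mathcal{H}_4$, exploiting that $V_N$ preserves $\mathcal{H}_4$ (it lies in $K(N)^+$ and $\mathcal{H}_4$ is irreducible for prime $N$) but moves the chosen slice $\{\tau = 2z\}$ to a $K(N)$-equivalent one. Abbreviate $\phi(\tau_1,\tau_2) = \begin{psmallmatrix} 2\tau_1 & \tau_1 \\ \tau_1 & \tau_1/2 + \tau_2/2N \end{psmallmatrix}$, so that $P_4 F = F \circ \phi$. I want to produce a single $\gamma \in K(N)$ with $\gamma \cdot \big(V_N \cdot \phi(\tau_1,\tau_2)\big) = \phi(\tau_2,\tau_1)$ whose automorphy factor at the relevant point is trivial; then the two sides of the lemma will agree and the proof is finished.

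First I would pin down the symplectic representative of $V_N$ and its automorphy factor. Solving $(AZ+B)(CZ+D)^{-1} = \begin{psmallmatrix} Nw & -z \\ -z & \tau/N \end{psmallmatrix}$ one finds that $V_N$ can be taken block-diagonal with $C = 0$, $A = \begin{psmallmatrix} 0 & \sqrt{N} \\ -1/\sqrt{N} & 0 \end{psmallmatrix}$ and $D = (A^{-1})^T$, so that $\det(CZ+D) = \det D = 1$ identically. Hence $F | V_N = F \circ V_N$ carries no automorphy factor, and evaluating at $\phi(\tau_1,\tau_2)$ gives $P_4(F|V_N)(\tau_1,\tau_2) = F(W)$, where $W = V_N \cdot \phi(\tau_1,\tau_2) = \begin{psmallmatrix} (N\tau_1+\tau_2)/2 & -\tau_1 \\ -\tau_1 & 2\tau_1/N \end{psmallmatrix}$.

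Next I would look for $\gamma$ among the ``$GL_2$-type'' elements $Z \mapsto MZM^T$, i.e. $\gamma = \begin{psmallmatrix} M & 0 \\ 0 & (M^{-1})^T \end{psmallmatrix}$, since these have the transparent automorphy factor $\det\big((M^{-1})^T\big) = (\det M)^{-1}$. Writing $W$ and the target $\phi(\tau_2,\tau_1)$ as linear pencils in $\tau_1,\tau_2$ and matching the two coefficient matrices reduces the search to two rank-one congruence conditions; the $\tau_2$-term forces the first column of $M$ to be $(2,1)^T$, and the (rank-one) $\tau_1$-term then determines the second column. I expect the solution $M = \begin{psmallmatrix} 2 & N \\ 1 & (N+1)/2 \end{psmallmatrix}$, for which a direct multiplication confirms $M W M^T = \phi(\tau_2,\tau_1)$. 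I would then verify the two integrality facts on which everything hinges: that $M$ has integer entries with $\det M = 1$, and that the associated $4 \times 4$ matrix $\gamma$ genuinely lies in $K(N)$, i.e. $\sigma_N^{-1}\gamma\sigma_N \in \mathbb{Z}^{4 \times 4}$. Both use crucially that $N$ is odd: this is what makes $(N+1)/2$ an integer and what makes the lower-right block scale correctly by $N$. Granting this, $P_4 F(\tau_2,\tau_1) = F\big(\phi(\tau_2,\tau_1)\big) = F(\gamma \cdot W) = (\det M)^{-k} F(W) = F(W) = P_4(F|V_N)(\tau_1,\tau_2)$.

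The main obstacle I anticipate is bookkeeping rather than conceptual: one must select the branch of the solution (here $(N+1)/2$ rather than $(N-1)/2$) that yields $\det M = +1$ instead of $-1$, so that the automorphy factor is exactly $1$ and no stray $(-1)^k$ survives, and one must confirm separately that $V_N$ contributes no factor either. Conceptually, the statement is the shadow of the fact that, under the orthogonal interpretation in which $G$ is essentially the orthogonal group of $A_1 \oplus A_1(-1) \oplus II_{1,1}$, the Fricke involution restricts on $\mathcal{H}_4$ to the isometry interchanging the two summands carrying $\tau_1$ and $\tau_2$, i.e. to the coordinate swap; the explicit $\gamma$ above is just a convenient witness for this.
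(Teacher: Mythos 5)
Your proof is correct and is essentially the paper's own argument: the paper uses exactly the matrix $u = \begin{psmallmatrix} 2 & N \\ 1 & (N+1)/2 \end{psmallmatrix}$ and the conjugation $U = \begin{psmallmatrix} u & 0 \\ 0 & (u^{-1})^T \end{psmallmatrix} \in K(N)$ (membership because the upper-right entry of $u$ is a multiple of $N$), and verifies $U \cdot V_N \cdot \phi(\tau_1,\tau_2) = \phi(\tau_2,\tau_1)$. Your extra bookkeeping on the trivial automorphy factors of $V_N$ and of $U$ (via $\det u = 1$) is left implicit in the paper but is the right thing to check.
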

\begin{proof} Fix the matrix $u = \begin{psmallmatrix} 2 & N \\ 1 & \frac{N+1}{2} \end{psmallmatrix} \in \mathrm{SL}_2(\mathbb{Z})$. Since the upper-right entry in $u$ is a multiple of $N$, it follows that the conjugation map $U = \begin{psmallmatrix} u & 0 \\ 0 & (u^{-1})^T \end{psmallmatrix}$ lies in $K(N)$. We find $$U \cdot V_N \cdot \begin{psmallmatrix} 2\tau_1 & \tau_1 \\ \tau_1 & \tau_1/2 + \tau_2 / 2N \end{psmallmatrix} = \begin{psmallmatrix} 2\tau_2 & \tau_2 \\ \tau_2 & \tau_2/2 + \tau_1 / 2N \end{psmallmatrix}$$ and the claim follows because $F$ is invariant under $U$.
\end{proof}

\section{A ring of degenerate Hilbert modular forms}

In this section we give a more careful study of modular forms for the group $G$ considered earlier. The structure of $M_*(G)$ is surely well-known (and for example the underlying surface was considered in detail in \cite{K}, section 3) but because of the frequent need to refer to it we give a complete account. Note that a related problem was solved in \cite{MM} for the group of pairs $(M_1,M_2)$ with $(M_1^{-1})^T \equiv M_2$ mod $3$ by means of invariant theory (Molien series). Their approach would also apply here but the structure of $M_*(G)$ is much simpler. \\

Bear in mind that (unlike the case of true Hilbert modular forms) Koecher's principle does not apply to the action of $G$ on $\mathbb{H} \times \mathbb{H}$ because the Satake boundary has components of codimension one. To account for this we define Phi operators $\Phi_1,\Phi_2$ by $$\Phi_1 f(\tau) = \lim_{z \rightarrow i \infty} f(z,\tau), \; \; \Phi_2 f(\tau) = \lim_{z \rightarrow i \infty} f(\tau,z).$$ A holomorphic function $f$ on $\mathbb{H} \times \mathbb{H}$ satisfying $$f(M_1 \cdot \tau_1, M_2 \cdot \tau_2) = (c_1 \tau_1 + d_1)^k (c_2 \tau_2 + d_2)^k f(\tau_1,\tau_2) \; \text{for all} \; \Big(\begin{psmallmatrix} a_1 & b_1 \\ c_1 & d_1 \end{psmallmatrix}, \begin{psmallmatrix} a_2 & b_2 \\ c_2 & d_2 \end{psmallmatrix} \Big) \in G$$ which is also holomorphic at the cusps, i.e. for which $\Phi_1 f$ and $\Phi_2 f$ are both holomorphic modular forms (of level $\Gamma_0(2)$), is a \textbf{modular form} for $G$. $f$ is a \textbf{cusp form} if $\Phi_1 f$, $\Phi_2 f$ are both zero. \\

Let $E_2(\tau) = 1 - 24 \sum_{n=1}^{\infty} \sigma_1(n) q^n$, $q = e^{2\pi i \tau}$ denote the (non-modular) Eisenstein series of weight two, and define $$e_1(\tau) = 2 E_2(2\tau) - E_2(\tau), \; e_2(\tau) = E_2(\tau) - \frac{1}{2}E_2(\tau/2).$$ Then $e_1,e_2$ are algebraically independent modular forms of level $\Gamma(2)$ and they generate the graded ring $M_*(\Gamma(2))$. Their behavior under the full modular group is $$e_1 | T = e_1, \; e_2 | T = e_1 - e_2, \; e_1 | S = -e_2, \; e_2|S = -e_1,$$ where $S = \begin{psmallmatrix} 0 & -1 \\ 1 & 0 \end{psmallmatrix}$, $T = \begin{psmallmatrix} 1 & 1 \\ 0 & 1 \end{psmallmatrix}$ and where $|$ is the Petersson slash operator. Define the products $$f_{ij}(\tau_1,\tau_2) = e_i(\tau_1)e_j(\tau_2), \; \; i,j \in \{1,2\}.$$

A modular form $f(\tau_1,\tau_2)$ is \textbf{symmetric} if $f(\tau_1,\tau_2) = f(\tau_2,\tau_1)$ and \textbf{antisymmetric} if $f(\tau_1,\tau_2) = -f(\tau_2,\tau_1)$.

\begin{lem}\label{H4} The ring of symmetric modular forms for $G$ is a polynomial ring in three variables: $$M_*^{sym}(G) = \mathbb{C}[X_2,X_4,\Delta_6],$$ where $X_2 = \frac{4}{3}(f_{11} + f_{22}) - \frac{2}{3}(f_{12} + f_{21})$ has weight two, $X_4 = \frac{1}{144} (f_{12} - f_{21})^2$ has weight four, and where \begin{align*} \Delta_6(\tau_1,\tau_2) &= \eta(\tau_1)^{12} \eta(\tau_2)^{12} \\ &= \frac{1}{2916} (f_{11} + f_{12} + f_{21} + f_{22}) (f_{11} - 2f_{12} - 2f_{21} + 4f_{22}) (4 f_{11} - 2f_{12} - 2f_{21} + f_{22}) \end{align*} is a cusp form of weight six. Here $\eta(\tau) = q^{1/24} \prod_{n=1}^{\infty} (1 - q^n)$ is the Dedekind eta function.
\end{lem}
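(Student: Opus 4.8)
The plan is to recognize $M_*^{sym}(G)$ as a ring of invariants and then invoke the classical invariant theory of the symmetric group $S_3$ acting on its reflection representation. I would begin with the group theory: since $\Gamma(2)\times\Gamma(2)\subseteq G$ and any $(M_1,M_2)\in G$ factors as $(M_1M_2^{-1},\mathbf 1)\cdot(M_2,M_2)$ with $M_1M_2^{-1}\in\Gamma(2)$, the group $G$ is generated by $\Gamma(2)\times\Gamma(2)$ together with the diagonal $\mathrm{SL}_2(\mathbb{Z})$. Because the pair $(-\mathbf 1,\mathbf 1)$ of identity matrices lies in $G$, forms of odd weight vanish, and a form of weight $k$ is homogeneous of bidegree $(k/2,k/2)$ in $\mathbb{C}[e_1,e_2]\otimes\mathbb{C}[e_1',e_2']$, writing $e_i'=e_i(\tau_2)$. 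Hence $M_*(G)$ is the invariant ring of the diagonal action of $\mathrm{SL}_2(\mathbb{Z})/\Gamma(2)\cong S_3$ on the ``balanced'' (equal-bidegree) part of $\mathbb{C}[e_1,e_2,e_1',e_2']$, and $M_*^{sym}(G)$ is cut out by the further, commuting involution $\iota\colon\tau_1\leftrightarrow\tau_2$, which fixes $f_{11},f_{22}$ and swaps $f_{12},f_{21}$.

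I would then identify the balanced ring concretely. The products $f_{ij}=e_ie_j'$ satisfy $f_{11}f_{22}=f_{12}f_{21}$, and since the bidegree-$(d,d)$ part of $\mathbb{C}[e_1,e_2,e_1',e_2']$ has dimension $(d+1)^2$, matching the Hilbert function of the Segre relation, the $f_{ij}$ generate the balanced ring with exactly this one relation; call it $R=\mathbb{C}[f_{11},f_{12},f_{21},f_{22}]/(f_{11}f_{22}-f_{12}f_{21})$. As $S_3$ and $\iota$ commute I take $\iota$-invariants first. The subring $R^{\iota}$ is generated by $f_{11},f_{22},f_{12}+f_{21}$ and $f_{12}f_{21}$, and the relation gives $f_{12}f_{21}=f_{11}f_{22}$, so $R^{\iota}=\mathbb{C}[f_{11},f_{22},f_{12}+f_{21}]$; equivalently the rank-one relation reads $(f_{12}-f_{21})^2=(f_{12}+f_{21})^2-4f_{11}f_{22}$, so the antisymmetric coordinate already squares into the symmetric ones. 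These three generators are algebraically independent because $R$, the coordinate ring of the rank-$\le 1$ locus, has Krull dimension three, and so does its finite quotient $R^{\iota}$; thus $R^{\iota}$ is a genuine polynomial ring.

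It remains to compute $(R^{\iota})^{S_3}$. The generators $f_{11},f_{22},f_{12}+f_{21}$ span the space of symmetric $2\times2$ matrices under the congruence action $F\mapsto AFA^{\mathrm{T}}$, where $A$ ranges over the image of $S_3$ in $\mathrm{GL}_2$ read off from $e_1|T=e_1,\ e_2|T=e_1-e_2,\ e_1|S=-e_2,\ e_2|S=-e_1$. One checks this image is irreducible and contains reflections (the transpositions act with eigenvalues $1,-1$), hence is the standard representation $V$ of $S_3$, and the symmetric matrices form $\mathrm{Sym}^2V\cong\mathbf 1\oplus V$. Since one summand is trivial, $(R^{\iota})^{S_3}=\mathbb{C}[w]\otimes\mathbb{C}[V]^{S_3}$, and by the Chevalley--Shephard--Todd theorem $\mathbb{C}[V]^{S_3}$ is polynomial on two fundamental invariants of degrees $2$ and $3$. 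Recalling that each coordinate of $\mathrm{Sym}^2V$ has weight $2$, this exhibits $M_*^{sym}(G)$ as a polynomial ring on three generators of weights $2,4,6$.

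Finally I would match $X_2,X_4,\Delta_6$ to these fundamental invariants. Each is an $S_3$-invariant of the correct weight lying in $R^{\iota}$ (for $\Delta_6$ one uses its factorization into the three linear forms $f_{11}+f_{12}+f_{21}+f_{22}$, $f_{11}-2f_{12}-2f_{21}+4f_{22}$, $4f_{11}-2f_{12}-2f_{21}+f_{22}$, all symmetric in $f_{12},f_{21}$). To see that they generate, it suffices to check algebraic independence, which I would read off the cusps: $X_2$ is a nonzero invariant linear form, hence a nonzero multiple of the trivial coordinate $w$; $144\,X_4=(f_{12}+f_{21})^2-4f_{11}f_{22}$ is not proportional to $X_2^2$, so it supplies the degree-two invariant of $V$; and $\Delta_6=\eta(\tau_1)^{12}\eta(\tau_2)^{12}$ is a cusp form, while computing $\Phi_2X_2=e_1$ and $\Phi_2X_4=\tfrac{1}{144}(\tfrac12 e_1-e_2)^2$ shows no nonzero weight-six element of $\mathbb{C}[X_2,X_4]$ is cuspidal, so $\Delta_6$ must supply the degree-three invariant. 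Three algebraically independent elements of weights $2,4,6$ in a polynomial ring with exactly those fundamental weights generate it, by comparing Hilbert series. The main obstacle is the bookkeeping of steps two and three — verifying that the $f_{ij}$ impose only the Segre relation, that $R^{\iota}$ is polynomial, and that the congruence action is the standard representation — after which Chevalley--Shephard--Todd makes polynomiality automatic and the identification of $X_2,X_4,\Delta_6$ is a brief computation with the $\Phi$-operators.
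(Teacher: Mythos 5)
Your argument is correct, but it takes a genuinely different route from the paper. The paper proves the lemma by induction on the weight: it checks that the two $\Phi$-operators send $X_2,X_4$ onto generators of $M_*(\Gamma_0(2))$, subtracts a polynomial in $X_2,X_4$ to make a given form vanish at both one-dimensional boundary components, and then divides by $\Delta_6$ (which is nonvanishing on $\mathbb{H}\times\mathbb{H}$ and has minimal-order zeros at the boundary) to drop the weight by six. You instead identify $M_*(G)$ outright with the $S_3$-invariants of the balanced part of $\mathbb{C}[e_1,e_2]\otimes\mathbb{C}[e_1',e_2']$ (the Segre cone $\mathbb{C}[f_{ij}]/(f_{11}f_{22}-f_{12}f_{21})$), take the swap-invariants to land in the polynomial ring on $\mathrm{Sym}^2V\cong\mathbf{1}\oplus V$, and invoke Chevalley--Shephard--Todd (really just the fundamental theorem of symmetric functions for $S_3$) to get fundamental invariants in weights $2,4,6$; the identification of $X_2,X_4,\Delta_6$ with a triangular change of generators is then a short computation, and your verifications (transpositions act as reflections, $\mathrm{Sym}^2V=\mathbf{1}\oplus V$, the Segre Hilbert-function count $(d+1)^2$, the cuspidality argument forcing $\Delta_6$ to carry the degree-three invariant) all check out. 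The one step you should make explicit is the very first identification $M_k(G)=\bigl(M_k(\Gamma(2))\otimes M_k(\Gamma(2))\bigr)^{S_3}$: since Koecher's principle fails here, you must argue that the paper's cusp conditions (holomorphy of $\Phi_1f,\Phi_2f$ at $i\infty$ only) together with diagonal $\mathrm{SL}_2(\mathbb{Z})$-invariance control all three cusps of $\Gamma(2)$ in each variable, so that $f(\cdot,\tau_2)$ really lies in the finite-dimensional space $M_k(\Gamma(2))$ and the K\"unneth-type decomposition applies; this is routine but not free. The trade-off: the paper's division-by-$\Delta_6$ argument is the same reduction technique it uses everywhere else and needs no invariant theory, while your approach explains \emph{why} the ring is polynomial (a reflection group acting on a normal toric cone), gives the antisymmetric module of Lemma~\ref{H4anti} essentially for free via $\Lambda^2V=\mathrm{sgn}$, and produces the Hilbert series without any induction.
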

\begin{proof} It is clear that $X_2,X_4,\Delta_6$ transform correctly under the diagonal action of $\mathrm{SL}_2(\mathbb{Z})$ and therefore define modular forms for $G$. The claimed expression for $\Delta_6$ in terms of the $f_{ij}$ follows from factoring $$\eta^{12} = \frac{1}{54} (e_1 + e_2)(2e_1 - e_2)(2e_2 - e_1).$$ The multiples are chosen to make the Fourier coefficients coprime integers. \\

We will prove that every symmetric modular form $f$ is a polynomial in $X_2,X_4,\Delta_6$ by induction on its weight $k$. If $k \le 0$ then $f$ is constant. Otherwise, applying either Phi operator $\Phi \in \{\Phi_1,\Phi_2\}$ yields $$\Phi X_2 = e_1, \; \Phi X_4 = \frac{1}{144}(e_1/2 - e_2)^2, \; \Phi \Delta_6 = 0.$$ The forms $e_1, (e_1/2 - e_2)^2$ generate the ring $M_*(\Gamma_0(2))$ (as one can show by computing dimensions) so there is some polynomial $P$ for which $\Phi[f - P(X_2,X_4)] = 0$ for both Phi operators. Since $\Delta_6$ has no zeros on $\mathbb{H} \times \mathbb{H}$ and zeros of minimal order along both boundary components, it follows that $\frac{f - P(X_2,X_4)}{\Delta_6}$ is a holomorphic modular form of weight $k-6$. By induction, $\frac{f - P(X_2,X_4)}{\Delta_6}$ and therefore also $f$ is a polynomial in $X_2,X_4,\Delta_6$. Since $M_*^{sym}(G)$ has Krull dimension $3$, the forms $X_2,X_4,\Delta_6$ must be algebraically independent. (One can also prove this by considering the values along the diagonal $(\tau,\tau)$, since $X_2(\tau,\tau) = E_4(\tau)$ and $\Delta_6(\tau,\tau) = \Delta(\tau)$ are algebraically independent and $X_4(\tau,\tau) \equiv 0$.)
\end{proof}

\begin{lem}\label{H4anti} The $M_*^{sym}(G)$-module of antisymmetric modular forms is free with a single generator $$X_8 = \frac{1}{81} (f_{11} - f_{22}) (f_{12} - f_{21}) (f_{12} + f_{21} - f_{11}) (f_{12} + f_{21} - f_{22})$$ in weight $8$.
\end{lem}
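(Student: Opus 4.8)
The plan is to realize all modular forms for $G$ inside a polynomial ring and then prove divisibility by $X_8$ factor by factor. Writing $a_i = e_i(\tau_1)$ and $b_j = e_j(\tau_2)$, every modular form for $G$ is a polynomial in $a_1,a_2,b_1,b_2$: since $e_1,e_2$ are algebraically independent and generate $M_*(\Gamma(2))$, we have $M_*(\Gamma(2)\times\Gamma(2)) = R := \mathbb{C}[a_1,a_2,b_1,b_2]$, a UFD, and holomorphy at the cusps of $G$ forces $M_k(G)\subseteq R$. In this model, $G$-modularity is invariance under the diagonal action of $\mathrm{SL}_2(\mathbb{Z})/\Gamma(2)\cong S_3$ together with being balanced of bidegree $(m,m)$ in the $a$'s and the $b$'s ($m = k/2$), while symmetry and antisymmetry are invariance and anti-invariance under the swap $\sigma\colon a_i\leftrightarrow b_i$. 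First I would record that $81\,X_8 = L_1L_2L_3L_4$ with $L_1 = f_{11}-f_{22}$, $L_2 = f_{12}-f_{21}$, $L_3 = f_{12}+f_{21}-f_{11}$, $L_4 = f_{12}+f_{21}-f_{22}$, check that each $L_i$ has a nondegenerate coefficient matrix (hence is prime in $R$) and that they are pairwise non-proportional, and verify from the $S,T$-transformations of $e_1,e_2$ that $X_8$ is an antisymmetric modular form for $G$.

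The heart of the argument is to show $L_i\mid f$ in $R$ for every antisymmetric $f$ and every $i$. Each zero set $V(L_i)$ is the twisted diagonal attached to an element $g_i\in S_3$ with $g_i^2 = \mathrm{id}$ (the identity for $L_2$, the three involutions for $L_1,L_3,L_4$), and writing $g\cdot v$ for the (diagonal) action of $g\in S_3$ on a vector $v\in\mathbb{C}^2$, one has $V(L_i) = \{(a, s\,g_i\!\cdot a): a\in\mathbb{C}^2,\ s\in\mathbb{C}\}$ up to closure. Since $f$ is balanced of bidegree $(m,m)$, its restriction to this set equals $s^m f(a, g_i\!\cdot a)$, so it suffices to prove $f(a, g_i\!\cdot a) = 0$. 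Here I use both symmetries at once: diagonal $S_3$-invariance gives $f(g_i\!\cdot a,\ g_i^2\!\cdot a) = f(a, g_i\!\cdot a)$, and $g_i^2 = \mathrm{id}$ turns the left-hand side into $f(g_i\!\cdot a, a)$; antisymmetry under $\sigma$ gives $f(g_i\!\cdot a, a) = -f(a, g_i\!\cdot a)$; combining the two forces $f(a, g_i\!\cdot a) = 0$. The closure point makes $f$ vanish on the irreducible hypersurface $V(L_i)$, so $f\in I(V(L_i)) = (L_i)$ by the Nullstellensatz.

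Granting this, the four $L_i$ are distinct primes, so $X_8\mid f$ in $R$; put $h = f/X_8\in R$. Cancelling $X_8$ in the domain $R$ shows that $h$ inherits $S_3$-invariance, and since $X_8$ is antisymmetric, $\sigma$-invariance; as $h$ is balanced of bidegree $(m-4,m-4)$ and lies in $R$, it is a symmetric modular form for $G$ of weight $\mathrm{wt}(f)-8$, hence $h\in\mathbb{C}[X_2,X_4,\Delta_6] = M_*^{sym}(G)$ by Lemma \ref{H4}. This gives $M_*^{anti}(G) = X_8\cdot M_*^{sym}(G)$ (the reverse inclusion is immediate from antisymmetric $=$ antisymmetric $\times$ symmetric), and in particular $M_k^{anti}(G) = 0$ for $k<8$. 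Finally, multiplication by the nonzero element $X_8$ is injective on the domain $M_*(G)$, so $M_*^{anti}(G)$ is free of rank one over $M_*^{sym}(G)$ with generator $X_8$.

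I expect the main obstacle to be the divisibility step. The tempting shortcut of saying ``an antisymmetric form vanishes on the diagonal $\{\tau_1=\tau_2\}$, hence on $V(L_i)$'' is insufficient, because the image of the modular curve $\{\tau_1 = \gamma\tau_2\}$ under $(\tau_1,\tau_2)\mapsto(a_1,a_2,b_1,b_2)$ is only two-dimensional inside the three-dimensional $V(L_i)$, so it never sees the full hypersurface. Replacing this transcendental parametrization by the purely algebraic one $(a,s\,g_i\!\cdot a)$ and using bidegree-homogeneity to strip off the scalar $s$ is exactly what propagates the vanishing to all of $V(L_i)$. Isolating the condition $g_i^2 = \mathrm{id}$ is the conceptual crux: the symmetry computation only closes up for involutions (and the identity), which is precisely why $X_8$ carries these four factors and not the two further factors associated with the order-three elements of $S_3$.
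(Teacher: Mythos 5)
Your proof is correct, but it follows a genuinely different route from the paper's. The paper argues by boundary restriction and induction on the weight: it observes that for antisymmetric $f$ the images $\Phi_1 f=-\Phi_2 f$ are \emph{cusp} forms for $\Gamma_0(2)$, that the cusp ideal of $M_*(\Gamma_0(2))$ is principal and generated by $\eta(\tau)^8\eta(2\tau)^8=\Phi_1 X_8$, subtracts $X_8\,P(X_2,X_4)$ to kill both boundary values, divides by $\Delta_6$, and inducts. You instead work entirely inside the polynomial ring $R=\mathbb{C}[e_1(\tau_1),e_2(\tau_1),e_1(\tau_2),e_2(\tau_2)]$, factor $81X_8$ into four nondegenerate (hence prime) bilinear forms $L_1,\dots,L_4$, identify each $V(L_i)$ with the homogenized twisted diagonal of the identity or one of the three involutions of $S_3\cong\mathrm{SL}_2(\mathbb{Z})/\Gamma(2)$, and kill $f$ on each by the two-line computation $f(a,g a)=f(g a,a)=-f(a,g a)$; I checked the matrix identifications (identity $\leftrightarrow L_2$, $S\leftrightarrow L_1$, $T\leftrightarrow L_3$, $TST\leftrightarrow L_4$) and they are right. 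Your approach buys a conceptual explanation of the divisor of $X_8$ --- exactly the fixed loci of the involutions, and not of the order-three elements --- and replaces the induction by pure divisibility in a UFD; its costs are the standing facts you assert in one line, namely $M_k(G)\subseteq M_k(\Gamma(2))\otimes M_k(\Gamma(2))$ (which needs holomorphy at all six one-dimensional boundary components, obtained from $\Phi_1,\Phi_2$ via the diagonal $\mathrm{SL}_2(\mathbb{Z})$-action) and the explicit matching of each $L_i$ with its group element, both of which should be written out. One practical caveat: the paper later cites ``surjectivity of $\Phi_2:M_*^{anti}(G)\to S_*(\Gamma_0(2))$ as shown in the proof of Lemma \ref{H4anti}'' when computing $\mathrm{Hilb}\,A_*$; if your proof replaces the paper's, that surjectivity must be rederived, which is immediate from your structure result since $\Phi_2(X_8\cdot M_*^{sym}(G))=\eta^8\eta(2\cdot)^8\cdot\Phi_2(M_*^{sym}(G))=S_*(\Gamma_0(2))$.
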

In other words we have the Hironaka decomposition $$M_*(G) = \mathbb{C}[X_2,X_4,\Delta_6] \oplus X_8 \mathbb{C}[X_2,X_4,\Delta_6].$$ By computing Fourier expansions explicitly one can show that the quadratic equation $X_8$ satisfies over $M_*^{sym}(G)$ is $$R : X_8^2 = X_2^4 X_4^2 - 128 X_2^2 X_4^3 + 4096 X_4^4 + 4 X_2^3 X_4 \Delta_6 - 2304 X_2 X_4^2 \Delta_6 - 6912 X_4 \Delta_6^2,$$ such that the full ring of modular forms is $$M_*(G) = \mathbb{C}[X_2,X_4,\Delta_6,X_8] / R.$$
\begin{proof} Suppose $f$ is an antisymmetric form. Letting $\tau$ tend to $i \infty$ in the equation $$\Phi_1 f(\tau) = \lim_{z \rightarrow i \infty} f(z,\tau) = -\lim_{z \rightarrow i \infty} f(\tau,z) = -\Phi_2 f(\tau)$$ and its images under the diagonal action of $\mathrm{SL}_2(\mathbb{Z})$ shows that $\Phi_1 f, \Phi_2 f$ are cusp forms of level $\Gamma_0(2)$. The ideal of cusp forms in $M_*(\Gamma_0(2))$ is principal, generated in weight $8$ by $\eta(\tau)^8 \eta(2\tau)^8$. \\

It is straightforward to check that $X_8$ is a modular form for $G$ (i.e. it transforms correctly under the diagonal action of $\mathrm{SL}_2(\mathbb{Z})$), that it is antisymmetric due to the factor $f_{12} - f_{21}$ in its product expression, and that it has image $\Phi_1 X_8(\tau) = \eta(\tau)^8 \eta(2 \tau)^8$ under the Phi operator. By the previous paragraph, there exists a polynomial $P$ such that $\Phi_1 [f - X_8 P(X_2,X_4)] = 0$, and by antisymmetry $\Phi_2 [f - X_8 P(X_2,X_4)] = 0$. The quotient $\frac{f - X_8 P(X_2,X_4)}{\Delta_6}$ is then a holomorphic, antisymmetric modular form of smaller weight than $f$, so by an induction argument (similar to the previous lemma) we find $\frac{f - X_8 P(X_2,X_4)}{\Delta_6} \in X_8 \cdot \mathbb{C}[X_2,X_4,\Delta_6]$ and therefore $f \in X_8 \mathbb{C}[X_2,X_4,\Delta_6]$.
\end{proof}

The pullback $P_4$ is never surjective. The fact that the surface $\overline{\mathcal{H}_4}$ inherits one-dimensional boundary components from the Siegel threefold restricts the modular forms that can arise as pullbacks of paramodular forms. What this means explicitly is the following:

\begin{prop} Suppose $f = P_4 F$ for a paramodular form $F$ of any (odd) level $N$. Then $\Phi_1 f(\tau/2)$ and $\Phi_2 f(\tau/2)$ are modular forms of level $1$.
\end{prop}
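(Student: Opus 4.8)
The plan is to identify both boundary restrictions $\Phi_1 f$ and $\Phi_2 f$ with rescalings of the ordinary Siegel operator applied to a paramodular form, and then to invoke the standard fact that this operator carries $M_k(K(N))$ into the level-one space $M_k(\mathrm{SL}_2(\mathbb{Z}))$. Write the Fourier expansion of $F$ as $F\begin{psmallmatrix}\tau & z \\ z & w\end{psmallmatrix} = \sum_{a,b,c}\alpha(a,b,c)q^a r^b s^{Nc}$ and let $\Phi_0 F(\tau) = \sum_{a\ge 0}\alpha(a,0,0)q^a$ be its Siegel $\Phi$-operator. Since $K(N)$ contains the copy of $\mathrm{SL}_2(\mathbb{Z})$ embedded in the upper-left corner, acting on the $\tau$-variable by M\"obius transformations, the function $\Phi_0 F$ transforms as a modular form of weight $k$ for the \emph{full} modular group; this is the only input about level-one forms that the argument requires.

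First I would treat $\Phi_2$, which is a direct Fourier computation. By definition $\Phi_2 f(\tau) = \lim_{\tau_2\to i\infty}F\begin{psmallmatrix}2\tau & \tau \\ \tau & \tau/2 + \tau_2/2N\end{psmallmatrix}$; substituting the Fourier series, the general term picks up a factor $e^{\pi i c\tau_2}$, so as $\tau_2\to i\infty$ only the terms with $c=0$ survive. The Koecher condition $b^2\le 4Nac$ then forces $b=0$ as well, and the surviving terms sum to $\sum_{a\ge 0}\alpha(a,0,0)e^{2\pi i(2a)\tau}=\Phi_0 F(2\tau)$. Hence $\Phi_2 f(\tau/2)=\Phi_0 F(\tau)$, which is a level-one form by the first paragraph.

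For $\Phi_1$ the naive limit $\tau_1\to i\infty$ is awkward, because all three entries of the period matrix tend to $i\infty$ at once; rather than evaluate it directly I would route it through the Fricke involution. The preceding lemma gives $P_4(F|V_N)(\tau_1,\tau_2)=P_4F(\tau_2,\tau_1)$, that is $\widetilde f(\tau_1,\tau_2)=f(\tau_2,\tau_1)$ with $\widetilde f=P_4(F|V_N)$, so that $\Phi_1 f(\tau)=\lim_{z\to i\infty}f(z,\tau)=\lim_{z\to i\infty}\widetilde f(\tau,z)=\Phi_2\widetilde f(\tau)$. Because $V_N$ normalizes $K(N)$, the form $F|V_N$ is again a paramodular form of level $N$, so the computation of the second paragraph applies to it verbatim and gives $\Phi_2\widetilde f(\tau)=\Phi_0(F|V_N)(2\tau)$. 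Therefore $\Phi_1 f(\tau/2)=\Phi_0(F|V_N)(\tau)$, once more a level-one form.

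The main obstacle is conceptual rather than computational: seeing that the two one-dimensional boundary strata of $\overline{\mathcal{H}_4}$ are interchanged by $V_N$ and that each is governed by the Siegel cusp of the ambient paramodular variety. Once this is recognized, the $\Phi_2$ limit is a one-line Fourier calculation and the $\Phi_1$ case costs nothing beyond the Fricke lemma; the only bookkeeping is the factor-of-two rescaling of the $q$-variable, which is precisely what the argument $\tau/2$ in the statement absorbs.
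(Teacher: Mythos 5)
Your proof is correct and follows essentially the same route as the paper: a direct Fourier computation showing that the $c=0$ terms survive the limit, Koecher's principle to kill $b\neq 0$, identification with the Siegel $\Phi$-operator (a level-one form via the upper-left embedding of $\mathrm{SL}_2(\mathbb{Z})$ in $K(N)$), and the Fricke involution to handle $\Phi_1$ — the last step being exactly the alternative the paper mentions parenthetically in place of its ``similar'' direct argument.
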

The analogous statement for meromorphic paramodular forms is not true.
\begin{proof} Write out the Fourier expansion of $F$: $$F(\begin{psmallmatrix} \tau & z \\ z & w \end{psmallmatrix}) = \sum_{a,b,c} \alpha(a,b,c) q^a r^b s^{Nc}, \; q = e^{2\pi i \tau}, \; r = e^{2\pi i z}, \; s = e^{2\pi i w}.$$ Then \begin{align*} \Phi_2 P_4 F(\tau/2) = \lim_{z \rightarrow i \infty} P_4 F(\tau/2,z) &= \lim_{z \rightarrow i \infty} F(\begin{psmallmatrix} \tau & \tau/2 \\ \tau/2 & \tau/4 + z/2N \end{psmallmatrix}) \\ &= \sum_{a,b=0}^{\infty} \alpha(a,b,0) q^{a + b/2}. \end{align*} By Koecher's principle, $\alpha(a,b,0) = 0$ unless $b = 0$ so $$\Phi_2 P_4 F(\tau/2) = \sum_{a=0}^{\infty} \alpha(a,0,0)q^a = \lim_{z \rightarrow i \infty} F( \begin{psmallmatrix} \tau & 0 \\ 0 & z \end{psmallmatrix}) = \Phi F(\tau)$$ is the image of $F$ under the usual Siegel Phi operator. This can be shown to be a modular form for the full group $\mathrm{SL}_2(\mathbb{Z})$ using the diagonal embedding of $\mathrm{SL}_2(\mathbb{Z})^2$ in $K(N)$. The proof for $\Phi_1 f(\tau/2)$ is similar. (One can also decompose $F$ into symmetric and antisymmetric parts under the Fricke involution $V_N$ to reduce to the cases $\Phi_1 f = \pm \Phi_2 f$.)
\end{proof}

It is clear that the forms $f \in M_*(G)$ for which $\Phi_1 f(\tau/2)$ and $\Phi_2 f(\tau/2)$ have level one form a graded subring of $M_*(G)$. We denote it by $A_*$ and we let $A_*^{sym}$ denote the subring of symmetric forms in $A_*$.

\begin{prop}\label{genA} (i) $A_*^{sym}$ is generated by five forms $$X_2^2 - 48X_4, \, X_2^3 - 72X_2 X_4, \, \Delta_6, \, X_2 \Delta_6, \, X_4 \Delta_6$$ in weights $4,6,6,8,10$. \\ (ii) $A_*$ is generated by the five generators of $A_*^{sym}$ together with the antisymmetric forms $X_4 X_8$, $\Delta_6 X_8$ and $X_2 \Delta_6 X_8$.
\end{prop}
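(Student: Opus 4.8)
The plan is to mimic the inductive structure of Lemmas \ref{H4} and \ref{H4anti}: split off an ``Eisenstein part'' with the $\Phi$-operators and reduce everything else to multiples of $\Delta_6$ (resp. $\Delta_6 X_8$). Throughout abbreviate $\alpha = X_2^2 - 48X_4$ and $\beta = X_2^3 - 72 X_2 X_4$, and recall that on symmetric forms $\Phi_1,\Phi_2$ agree (write $\Phi$ for the common value), with $\Phi X_2 = e_1$, $\Phi X_4 = \tfrac{1}{144}(e_1/2-e_2)^2$ and $\Phi\Delta_6 = 0$. For (i), observe first that $\Phi$ is a ring homomorphism, injective on $\mathbb{C}[X_2,X_4]$ because $e_1$ and $(e_1/2-e_2)^2$ are algebraically independent, and killing $\Delta_6$. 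Writing $f \in M_*^{sym}(G)$ as $f = P_0(X_2,X_4) + \Delta_6 h$ we get $\Phi f = \Phi P_0$, so membership of $f$ in $A_*^{sym}$ depends only on $P_0$; in particular every multiple of $\Delta_6$ lies in $A_*^{sym}$, and $A_*^{sym} = S \oplus \Delta_6\mathbb{C}[X_2,X_4,\Delta_6]$ as graded vector spaces, where $S = \{P_0 \in \mathbb{C}[X_2,X_4] : \Phi P_0(\tau/2) \text{ has level one}\}$.

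Next I would identify $S$. A direct $q$-expansion check gives $\Phi\alpha = E_4(2\tau)$ and $\Phi\beta = E_6(2\tau)$, so that $\Phi\alpha(\tau/2) = E_4$ and $\Phi\beta(\tau/2) = E_6$ are level one and $\mathbb{C}[\alpha,\beta]\subseteq S$. The composite $P_0 \mapsto \Phi P_0(\tau/2)$ is injective (the scaling $\tau \mapsto \tau/2$ is injective) and, by definition of $S$, lands in $M_*(\mathrm{SL}_2(\mathbb{Z})) = \mathbb{C}[E_4,E_6]$; since $\mathbb{C}[\alpha,\beta]$ already surjects onto $\mathbb{C}[E_4,E_6]$, this forces $S = \mathbb{C}[\alpha,\beta]$. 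To finish (i) I would use that $X_2$ satisfies the integral relation $X_2^3 - 3\alpha X_2 + 2\beta = 0$ over $\mathbb{C}[\alpha,\beta]$; as $\alpha,\beta$ cut out only the origin they form a homogeneous system of parameters, so the Cohen--Macaulay ring $\mathbb{C}[X_2,X_4]$ is free over $\mathbb{C}[\alpha,\beta]$ with basis $1, X_2, X_2^2$. Decomposing $\Delta_6^m\mathbb{C}[X_2,X_4] = \bigoplus_{i=0}^2 \Delta_6^m X_2^i\mathbb{C}[\alpha,\beta]$ and using $\Delta_6^m X_2 = \Delta_6^{m-1}(X_2\Delta_6)$ together with $\Delta_6^m X_2^2 = \alpha\Delta_6^m + 48\,\Delta_6^{m-1}(X_4\Delta_6)$ shows $\Delta_6\mathbb{C}[X_2,X_4,\Delta_6]$ lies in the ring generated by $\Delta_6, X_2\Delta_6, X_4\Delta_6, \alpha, \beta$; with $S = \mathbb{C}[\alpha,\beta]$ this proves (i).

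For (ii) I would first decompose $A_*$ into symmetric and antisymmetric parts; this respects $A_*$ because the sum and difference of two level-one forms are again level one. On the antisymmetric part $X_8\mathbb{C}[X_2,X_4,\Delta_6]$ the same reduction applies: for $f = X_8 g$ one has $\Phi_1 f = \Phi_1 X_8\cdot\Phi g$ with $\Phi_1 X_8(\tau/2) = \eta(\tau/2)^8\eta(\tau)^8 \neq 0$, so $A_*^{antisym} = X_8 T \oplus X_8\Delta_6\mathbb{C}[X_2,X_4,\Delta_6]$, where $T = \{P_0 : \eta(\tau/2)^8\eta(\tau)^8\,\Phi P_0(\tau/2) \text{ has level one}\}$. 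A $q$-expansion check gives $\Phi_1(X_4 X_8)(\tau/2) = \Delta$, so $X_4 \in T$; since $T$ is a module over $\mathbb{C}[\alpha,\beta]$ whose image under $P_0 \mapsto \eta(\tau/2)^8\eta(\tau)^8\,\Phi P_0(\tau/2)$ consists of cusp forms in $\Delta\cdot\mathbb{C}[E_4,E_6]$, and $X_4\mathbb{C}[\alpha,\beta]$ already surjects onto all such cusp forms, we get $T = X_4\mathbb{C}[\alpha,\beta]$. Applying the free-module decomposition once more expresses $X_8 T$ and $X_8\Delta_6\mathbb{C}[X_2,X_4,\Delta_6]$ in terms of $X_4 X_8$, $\Delta_6 X_8$ and $X_2\Delta_6 X_8$ over $A_*^{sym}$, which with (i) yields (ii).

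The main obstacle is the explicit identification of $\Phi\alpha$, $\Phi\beta$ and $\Phi_1(X_4 X_8)$ after the rescaling $\tau \mapsto \tau/2$. The scaling conjugates $\Gamma_0(2)$ to $\Gamma^0(2)$, and the content of the level-one condition is precisely that the half-integral powers of $q$ contributed by the individual factors cancel, leaving $E_4$, $E_6$ and $\Delta$ respectively; the asserted generators are exactly the combinations engineered to make this cancellation occur. Once these three identities are in hand, together with the freeness of $\mathbb{C}[X_2,X_4]$ over $\mathbb{C}[\alpha,\beta]$, the remainder of the argument is purely formal.
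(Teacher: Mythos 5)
Your argument is correct, and it is a genuinely different proof from the one in the paper. The paper proceeds by Hilbert series comparison: it uses the surjectivity of $\Phi_2 : M_*^{sym}(G) \to M_*(\Gamma_0(2))$ (resp.\ onto $S_*(\Gamma_0(2))$ in the antisymmetric case) to compute $\dim A_k^{sym}$ and $\dim A_k^{anti}$ weight by weight, verifies that the eight candidate forms lie in $A_*$, and then checks (in Macaulay2 or by hand) that the subalgebra they generate has the same Hilbert series as $A_*$. You instead give a structural argument: splitting off $\ker\Phi$ via $f = P_0(X_2,X_4) + \Delta_6 h$ reduces everything to identifying $S$ and $T$, which you do by exploiting injectivity of $P_0 \mapsto \Phi P_0(\tau/2)$ on $\mathbb{C}[X_2,X_4]$ together with the fact that $\mathbb{C}[\alpha,\beta]$ (resp.\ $X_4\mathbb{C}[\alpha,\beta]$) already maps onto $\mathbb{C}[E_4,E_6]$ (resp.\ $\Delta\,\mathbb{C}[E_4,E_6]$); the freeness of $\mathbb{C}[X_2,X_4]$ over $\mathbb{C}[\alpha,\beta]$ with basis $1,X_2,X_2^2$ (via the relation $X_2^3 - 3\alpha X_2 + 2\beta = 0$, which is correct) then converts these descriptions into generators. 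Both proofs rest on the same explicit identities $\Phi(X_2^2-48X_4)=E_4(2\tau)$, $\Phi(X_2^3-72X_2X_4)=E_6(2\tau)$, $\Phi_1(X_4X_8)=\Delta(2\tau)$, which you, like the paper, verify by $q$-expansion. What your route buys is a computer-free, fully self-contained argument that moreover exhibits an explicit module decomposition of $A_*$ (e.g.\ $A_*^{sym}=\mathbb{C}[\alpha,\beta]\oplus\Delta_6\mathbb{C}[X_2,X_4,\Delta_6]$), at the cost of a slightly longer bookkeeping step; the paper's route is shorter on the page but outsources the final generation check to a Hilbert series computation. One small point worth making explicit in your antisymmetric case: the image of $T$ consists of \emph{cusp} forms because $\eta(\tau/2)^8\eta(\tau)^8 = q^{1/2}(1+O(q^{1/2}))$ while $\Phi P_0(\tau/2)$ is bounded at $i\infty$, so a level-one image necessarily lies in $\Delta\,\mathbb{C}[E_4,E_6]$ --- you gesture at this in your closing paragraph and it is the only place where the reader has to supply a line of detail.
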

\begin{proof} (i) It was shown in the proof of Lemma~\ref{H4} that the Phi operator $$\Phi_2 : M_*^{sym}(G) \rightarrow M_*(\Gamma_0(2))$$ is surjective. Therefore we compute $$\mathrm{dim}\, M_k^{sym}(G) - \mathrm{dim}\, A_k^{sym} = \mathrm{dim} \, M_k(\Gamma_0(2)) - \mathrm{dim}\, M_k(\mathrm{SL}_2(\mathbb{Z})) = \lceil k/6 \rceil$$ for all even $k$. Using $\sum_{k=0}^{\infty} \lceil 2k/6 \rceil t^{2k} = \frac{t^2}{(1 - t^2)(1 - t^6)}$ and the structure $M_*^{sym}(G) = \mathbb{C}[X_2,X_4,\Delta_6]$ we find the Hilbert series \begin{align*} \mathrm{Hilb} \, A_*^{sym} &= \sum_{k=0}^{\infty} \mathrm{dim} \, A_k^{sym} t^k \\ &= \frac{1}{(1 - t^2)(1 - t^4)(1 - t^6)} - \frac{t^2}{(1 - t^2)(1 - t^6)} \\ &= \frac{1 + t^8 + t^{10}}{(1 - t^4)(1 - t^6)^2}. \end{align*} Applying either Phi operator to $X_2^2 - 48X_4$ and $X_2^3 - 72X_2X_4$ yields $E_4(2\tau)$ and $E_6(2\tau)$, respectively, so these are contained in $A_*^{sym}$; and this condition is trivial for $\Delta_6$, $X_2 \Delta_6$ and $X_4 \Delta_6$ which are cusp forms. Therefore to prove (i) one only needs to see that the subalgebra generated by these five forms has the Hilbert series $\frac{1 + t^8 + t^{10}}{(1 - t^4)(1 - t^6)^2}$. This can be checked in Macaulay2 or by hand. \\

(ii) Similarly the Phi operator $\Phi_2 : M_*^{anti}(G) = X_8 \cdot M_*^{sym}(G) \rightarrow S_*(\Gamma_0(2))$ is surjective onto cusp forms (as shown in the proof of Lemma~\ref{H4anti}) so we find $$\mathrm{dim}\, M_k^{anti}(G) - \mathrm{dim}\, A_k^{anti} = \mathrm{dim}\, S_k(\Gamma_0(2)) - \mathrm{dim}\, S_k(\mathrm{SL}_2(\mathbb{Z})) = \lceil k/6 \rceil - 1.$$ Thus the graded module $A_*^{anti}$ has Hilbert series $$\sum_{k=0}^{\infty} \mathrm{dim}\, A_k^{anti} t^k = \frac{t^8}{(1 - t^2)(1 - t^4)(1 - t^6)} - \frac{t^8}{(1 - t^2)(1 - t^6)} = \frac{t^{12}}{(1 -t ^2)(1 - t^4)(1 - t^6)}$$ and therefore $$\mathrm{Hilb} \, A_* = \frac{1 + t^8 + t^{10}}{(1 - t^4)(1-t^6)^2} + \frac{t^{12}}{(1 - t^2)(1-t^4)(1-t^6)} = \frac{1 - t^2 + t^6 + t^{12}}{(1 - t^2)(1 - t^4)(1 - t^6)}.$$ Note that $\Phi_1(X_4 X_8) = \Delta(2\tau)$ i.e. $X_4 X_8 \in A_{12}$ and that $\Delta_6 X_8$ and $X_2 \Delta_6 X_8$ are cusp forms, so all three are indeed contained in $A_*$. We checked again that the subring of $M_*(G)$ generated by the eight forms in the claim has the same Hilbert series as $A_*$ so these forms generate $A_*$.
\end{proof}

\begin{rem} One can ask for what levels $N$ the ring $A_*$ is exactly the image of $P_4$. It seems reasonably likely that for any odd $N \ge 3$ one has $A_*^{sym} = \mathrm{im}(P_4)$. The five generators of $A_*^{sym}$ given above can be shown to be theta lifts (essentially the Doi-Naganuma lift for degenerate Hilbert modular forms) so one might try to realize them as pullbacks of families of Gritsenko lifts. We only consider levels $N=5,7$ below so we leave this question aside. As for antisymmetric forms, for $N \ge 3$ one can always realize the non-cusp form $X_4 X_8$ as a pullback of Gritsenko and Nikulin's pullbacks (\cite{GN2}, Remark 4.4) of the Borcherds form $\Phi_{12}$ which is automorphic under the orthogonal group $O(26,2)$. To see this it is enough to observe that Gritsenko and Nikulin's pullbacks are not cusp forms so they are not allowed to vanish along $\mathcal{H}_4$ (or any other Humbert surface of square discriminant for that matter). Whether the cusp forms $\Delta_6 X_8$ and $X_2 \Delta_6 X_8$ arise as pullbacks seems to be less clear.
\end{rem}

\section{Paramodular forms of level $5$}

In this section we compute the graded ring of paramodular forms of level $5$ in terms of the Eisenstein series $\mathcal{E}_4$ and $\mathcal{E}_6$ and the Borcherds products and Gritsenko lifts listed in tables 1 and 2 below.

\begin{longtable}{|l|l|l|l|l|}
\caption{Some Borcherds products of level 5} \\
\hline
   Name  &    Weight     &  Symmetric &  Divisor                                                       \\ \hline
       $b_5$ & $5$  & no & $7\mathcal{H}_1 + \mathcal{H}_4$ \\ \hline
       $b_8$ & $8$  & yes & $4\mathcal{H}_1 + 2\mathcal{H}_4 + 2\mathcal{H}_5$ \\ \hline
       $b_{12}$ & $12$  & no & $3\mathcal{H}_5 + \mathcal{H}_{20}$ \\ \hline
       $b_{14}$ & $14$ & no & $10\mathcal{H}_1 + \mathcal{H}_5 + \mathcal{H}_{20}$ \\ \hline
\end{longtable}

Tables of Borcherds products (including those with character) are available in chapter 7 of \cite{M} or appendix A of \cite{W}. In the notation of \cite{W} these are the forms $b_5 = \psi_5$, $b_8 = \psi_4^2$, $b_{12} = \psi_{12}$ and $b_{14} = \frac{b_5^2 b_{12}}{b_8} = \psi_4^{-2} \psi_5^2 \psi_{12}$. For a more modern approach to computing paramodular Borcherds products see also \cite{PSY}. (Bear in mind that \cite{PSY} focuses on cusp forms, and that $b_{12}$ is not a cusp form since its divisor does not contain any Humbert surface of square discriminant.)

\begin{longtable}{|l|l|l|}
\caption{Some Gritsenko lifts of level 5} \\
\hline
   Name  &    Weight &    Input Jacobi form                                                      \\ \hline
       $g_6$ & $6$ & $\begin{aligned} &\quad \Big( (\zeta^{-4} + \zeta^4) - 2(\zeta^{-3} + \zeta^3) - 8(\zeta^{-2} + \zeta^2) + 34(\zeta^{-1} + \zeta) - 50 \Big) q \\ &\quad\quad + \Big( (\zeta^{-6} + \zeta^6) - 20(\zeta^{-5} + \zeta^5) + 62(\zeta^{-4} + \zeta^4) - 52(\zeta^{-3} + \zeta^3) \\ &\quad\quad - 33(\zeta^{-2} + \zeta^2)+ 72(\zeta^{-1} + \zeta) - 60 \Big) q^2 + O(q^3) \end{aligned}$ \\ \hline
       $g_7$ & $7$ & $\begin{aligned} &\quad \Big( (\zeta^{-4} - \zeta^4) + 6(\zeta^{-3} - \zeta^3) - 34(\zeta^{-2} - \zeta^2) + 46(\zeta^{-1} - \zeta) \Big) q \\ &\quad\quad + \Big( -1(\zeta^{-6} - \zeta^6)  - 108(\zeta^{-4} - \zeta^4) + 416(\zeta^{-3} - \zeta^3) \\ &\quad\quad - 549(\zeta^{-2} - \zeta^2) + 288(\zeta^{-1} - \zeta) \Big)q^2 + O(q^3) \end{aligned}$ \\ \hline
       $g_8$ & $8$ & $\begin{aligned} &\quad \Big( (\zeta^{-4} + \zeta^4) + 12(\zeta^{-2} + \zeta^2) - 64(\zeta^{-1} + \zeta) + 102 \Big)q \\ &\quad\quad + \Big( (\zeta^{-6} + \zeta^6) + 24(\zeta^{-5} + \zeta^5) - 222(\zeta^{-4} + \zeta^4) + 312(\zeta^{-3} + \zeta^3) \\ &\quad\quad + 495(\zeta^{-2} + \zeta^2) - 1872(\zeta^{-1} + \zeta) + 2524 \Big) q^2 + O(q^3) \end{aligned}$ \\ \hline
       $g_{10}$ & $10$ & $\begin{aligned} &\quad \Big(  (\zeta^{-3} + \zeta^3) - 6(\zeta^{-2} + \zeta^2) + 15(\zeta^{-1} + \zeta) - 20 \Big)q \\ &\quad\quad + \Big( 14(\zeta^{-5} + \zeta^5) - 26(\zeta^{-4} + \zeta^4) - 66(\zeta^{-3} + \zeta^3) \\ &\quad\quad + 216(\zeta^{-2} + \zeta^2) - 204(\zeta^{-1} + \zeta) + 132 \Big)q^2 + O(q^3) \end{aligned}$ \\ \hline
\end{longtable}

There are several ways to compute Jacobi forms. We used the algorithm of \cite{W} to compute the equivalent vector-valued modular forms. The motivation for these choices is that $g_6,g_8,g_{10}$ pullback to the forms $\Delta_6$, $X_2 \Delta_6$ and $X_4 \Delta_6$ under $P_4$ and that $g_7 / b_5$ is (as we will show) meromorphic, has no poles except a double pole on $\mathcal{H}_1$, and is nonvanishing along the surfaces $\mathcal{H}_4,\mathcal{H}_5,\mathcal{H}_{20}$. In particular, given a paramodular form $F$ which vanishes to a particular order along the diagonal, we can produce a chain of holomorphic forms $F, F (g_7/b_5), F(g_7/b_5)^2,...$ while keeping control of the orders along these Humbert surfaces. \\

We should remark that Borcherds' additive singular theta lift (Theorem 14.3 of \cite{B}) implies that, for even $k \ge 4$, one can construct a meromorphic paramodular form of weight $k$ with only poles of order $k$ along an arbitrary collection of Humbert surfaces $\mathcal{H}_D$. (This will be the theta lift of a linear combination of Poincar\'e series of negative index and weight $k-1/2$.) The argument fails in weight $k=2$ and the fact that we can produce such a paramodular form for the diagonal (i.e. $g_7/b_5$) by other means turns out to be rather useful. \\

Any paramodular form $F( \begin{psmallmatrix} \tau & z \\ z & w \end{psmallmatrix})$ which is not identically zero can be expanded as $$F( \begin{psmallmatrix} \tau & z \\ z & w/N \end{psmallmatrix}) = g(\tau,w) z^n + O(z^{n+1})$$ where $g \ne 0$. It is clear that $g$ is invariant under $\tau \mapsto \tau+1$ and $w \mapsto w+1$. Considering the behavior of $F$ under $\begin{psmallmatrix} 0 & 0 & -1 & 0 \\ 0 & 1 & 0 & 0 \\ 1 & 0 & 0 & 0 \\ 0 & 0 & 0 & 1 \end{psmallmatrix}, \begin{psmallmatrix} 1 & 0 & 0 & 0 \\ 0 &0 & 0 & -1/N \\ 0 & 0 & 1 & 0 \\ 0 & 1 & 0 & 0 \end{psmallmatrix}$ shows that $g$ satisfies $$g(-1/\tau,w) = \tau^{k+n} g(\tau,w), \; \; g(\tau,-1/w) = w^{k+n} g(\tau,w),$$ and boundedness of $g$ as either $\tau$ or $w$ tends to $i \infty$ follows from that of $F$. (In fact if $n > 0$ then $g$ tends to zero at the cusps.) Therefore $g$ is a modular form for $\mathrm{SL}_2(\mathbb{Z}) \times \mathrm{SL}_2(\mathbb{Z})$ of weight $k+n$ and a cusp form if $n > 0$. \\

The construction of $g$ above is generalized to arbitrary Humbert surfaces by the \textbf{quasi-pullback}. If $F$ is a paramodular form of weight $k$ with a zero of order $m$ on the Humbert surface $\mathcal{H}_D$ then one obtains a Hilbert modular form for $\mathbb{Q}(\sqrt{D})$ of weight $k+m$, which is a cusp form if $m > 0$. We will never actually need to compute quasi-pullbacks (although we will often use the fact that they exist) so we omit the definition which is more natural in the interpretation as orthogonal modular forms. See section 8 of \cite{GHS}, especially Theorem 8.11. \\

Besides the pullbacks $P_1$ and $P_4$ to the diagonal and to $\mathcal{H}_4$ we will also need to evaluate paramodular forms along the Humbert surface $\mathcal{H}_5$. To be completely explicit we fix the pullback operator $P_5$ below.

\begin{lem}\label{P5} Let $\lambda = \frac{5 - \sqrt{5}}{10}$ and $\lambda' = \frac{5 + \sqrt{5}}{10}$. The Humbert surface $\mathcal{H}_5$ is the orbit of $$\left\{ \begin{psmallmatrix} \tau_1 + \tau_2 & \lambda \tau_1 + \lambda' \tau_2 \\ \lambda \tau_1 + \lambda' \tau_2 & \lambda^2 \tau_1 + (\lambda')^2 \tau_2 \end{psmallmatrix}: \; \tau_1, \tau_2 \in \mathbb{H}\right\}$$ under the extended paramodular group $K(5)^+$. For any paramodular form $F$, $$P_5 F(\tau_1,\tau_2) = F\left( \begin{psmallmatrix} \tau_1 + \tau_2 & \lambda \tau_1 + \lambda' \tau_2 \\ \lambda \tau_1 + \lambda' \tau_2 & \lambda^2 \tau_1 + (\lambda')^2 \tau_2 \end{psmallmatrix}\right)$$ is a Hilbert modular form for the full group $\mathrm{SL}_2(\mathcal{O}_K)$ (where $\mathcal{O}_K$ is the ring of integers of $K = \mathbb{Q}(\sqrt{5})$). Moreover the pullback $P_5$ preserves cusp forms and it sends $V_5$-symmetric forms to symmetric Hilbert modular forms.
\end{lem}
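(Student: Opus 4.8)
The plan is to mirror the constructions of $P_1$ and $P_4$: build an explicit injection $\Phi_5 \colon \mathrm{SL}_2(\mathcal{O}_K) \to K(5)$ that intertwines the Hilbert modular action on $\mathbb{H} \times \mathbb{H}$ with the paramodular action on the parametrized surface. Writing $S = \begin{psmallmatrix} 1 & 1 \\ \lambda & \lambda' \end{psmallmatrix}$, one checks immediately that the surface is
\[ Z(\tau_1,\tau_2) = S \, \mathrm{diag}(\tau_1,\tau_2) \, S^T . \]
For $M = \begin{psmallmatrix} \alpha & \beta \\ \gamma & \delta \end{psmallmatrix} \in \mathrm{SL}_2(\mathcal{O}_K)$ with Galois conjugate $M'$, I would set $\Phi_5(M) = \Sigma \, \iota(M,M') \, \Sigma^{-1}$, where $\iota(M,M')$ is the block-diagonal image of $(M,M')$ in $\mathrm{Sp}_4(\mathbb{R})$ coming from the two real embeddings of $K$, and $\Sigma = \begin{psmallmatrix} S & 0 \\ 0 & (S^{-1})^T \end{psmallmatrix}$ is the symplectic change of variables realizing $W \mapsto SWS^T$ (this is the standard $\mathrm{GL}_2 \hookrightarrow \mathrm{Sp}_4$ embedding, so $\Phi_5(M) \in \mathrm{Sp}_4(\mathbb{R})$). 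By construction $\Phi_5(M) \cdot Z(\tau_1,\tau_2) = Z(M \cdot \tau_1, M' \cdot \tau_2)$, and the symplectic cocycle gives the automorphy factor $(\gamma\tau_1 + \delta)(\gamma'\tau_2 + \delta')$, the constant contributions $\det S^{\pm 1}$ from $\Sigma^{\pm 1}$ cancelling.

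First I would confirm that the surface represents $\mathcal{H}_5$. Using $\lambda + \lambda' = 1$ and $5\lambda^2 = 5\lambda - 1$, every point $Z(\tau_1,\tau_2)$ satisfies the primitive singular equation $\tau - 5z + 5w = 0$ of discriminant $(-5)^2 - 4 \cdot 5 \cdot 1 = 5$; since $N=5$ is prime $\mathcal{H}_5$ is irreducible, so the $K(5)^+$-orbit of this two-dimensional surface is all of $\mathcal{H}_5$. The crux is then to verify $\Phi_5(M) \in K(5)$, i.e. that $\sigma_5^{-1} \Phi_5(M) \sigma_5$ is integral. Writing each entry of $M$ in the form $u + v\omega$ with $\omega = \tfrac{1+\sqrt 5}{2}$, and using that $x + x' \in \mathbb{Z}$ and $x - x' \in \sqrt 5\,\mathbb{Z}$ for $x \in \mathcal{O}_K$, the factor $\det S = 1/\sqrt 5$ combines with these traces and differences so that the four blocks $S\,\mathrm{diag}(\alpha,\alpha')\,S^{-1}$, $S\,\mathrm{diag}(\beta,\beta')\,S^T$, $(S^{-1})^T\mathrm{diag}(\gamma,\gamma')\,S^{-1}$, $(S^{-1})^T\mathrm{diag}(\delta,\delta')\,S^T$ land exactly in the paramodular lattice: the lower-left block acquires an overall factor $5$ while the upper-right block has entries in $\tfrac{1}{5}\mathbb{Z}$, which is precisely the level-$5$ condition. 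I expect this block-by-block integrality computation — in particular the assertion that the \emph{full} group $\mathrm{SL}_2(\mathcal{O}_K)$, and not merely a congruence subgroup, maps into $K(5)$ — to be the main obstacle.

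Granting the embedding, for $F \in M_k(K(5))$ the transformation law $P_5 F(M\tau_1, M'\tau_2) = (\gamma\tau_1+\delta)^k(\gamma'\tau_2+\delta')^k P_5 F(\tau_1,\tau_2)$ is immediate, and holomorphy together with boundedness at the cusps follows from Koecher's principle for $F$. Because the fundamental unit $\tfrac{1+\sqrt5}{2}$ of $\mathbb{Q}(\sqrt 5)$ has norm $-1$, the narrow class number equals the class number $1$: the full Hilbert modular group is $\mathrm{SL}_2(\mathcal{O}_K)$ and there is a single cusp, so $P_5 F$ is a Hilbert modular form for $\mathrm{SL}_2(\mathcal{O}_K)$.

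For the cusp form statement I would expand $P_5 F(\tau_1,\tau_2) = \sum_{a,b,c} \alpha(a,b,c)\, e^{2\pi i(\mu \tau_1 + \mu'\tau_2)}$ with $\mu = a + b\lambda + 5c\lambda^2$. The constant term corresponds to $\mu = 0$, which by the $\mathbb{Q}$-independence of $1,\lambda$ forces $(a,b,c) = (c,-5c,c)$; but then $b^2 - 4Nac = 5c^2 > 0$ for $c \ne 0$, contradicting Koecher, so only $\alpha(0,0,0)$ survives. As $\alpha(0,0,0)$ is the constant term of the Siegel $\Phi$-operator $\Phi F$, it vanishes when $F$ is a cusp form, and since there is only one cusp this shows $P_5 F$ is cuspidal. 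Finally, for the symmetry statement I would argue exactly as in the earlier Fricke lemma for $P_4$: the involution $V_5$ realizes the Galois swap $\sqrt 5 \mapsto -\sqrt 5$, so I would exhibit an explicit $U \in K(5)$ with $U \cdot V_5 \cdot Z(\tau_1,\tau_2) = Z(\tau_2,\tau_1)$, from which $P_5(F|V_5)(\tau_1,\tau_2) = P_5 F(\tau_2,\tau_1)$; hence a $V_5$-symmetric $F$ pulls back to a symmetric Hilbert modular form.
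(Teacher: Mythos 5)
Your proposal is correct, but it takes a genuinely different route from the paper on the main point. The paper never writes down the embedding of all of $\mathrm{SL}_2(\mathcal{O}_K)$: it observes that $\mathcal{O}_K$ is Euclidean (or invokes Vaserstein), so that $\mathrm{SL}_2(\mathcal{O}_K)$ is generated by the translations $T_b$ and the single inversion $S$, and then checks invariance only for those generators --- the translations correspond to translation by $\begin{psmallmatrix} \mathrm{Tr}(b) & \mathrm{Tr}(\lambda b) \\ \mathrm{Tr}(\lambda b) & \mathrm{Tr}(\lambda^2 b)\end{psmallmatrix}$ and the inversion to one explicit matrix of $K(5)$. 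You instead conjugate the block-diagonal embedding by $\Sigma = \begin{psmallmatrix} S & 0 \\ 0 & (S^{-1})^T\end{psmallmatrix}$ and assert integrality of all four blocks for an arbitrary $M \in \mathrm{SL}_2(\mathcal{O}_K)$; you correctly flag this as the crux and only sketch it, but the computation does go through (e.g.\ the upper-right block is $\begin{psmallmatrix}\mathrm{Tr}(\beta) & \mathrm{Tr}(\lambda\beta) \\ \mathrm{Tr}(\lambda\beta) & \mathrm{Tr}(\lambda^2\beta)\end{psmallmatrix}$ with $\mathrm{Tr}(\lambda^2\beta) = \mathrm{Tr}(\lambda\beta) - \tfrac15\mathrm{Tr}(\beta)$, and the lower-left block picks up the factor $(\det S)^{-2} = 5$), so your argument is sound; it is more work entry-by-entry but avoids any appeal to generation by elementary matrices and would transfer to settings where that is less convenient. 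The identification of the surface with $\mathcal{H}_5$ via the singular equation $\tau - 5z + 5w = 0$ and irreducibility for prime level is the same in both. A point in your favor: you actually prove the cusp-form and symmetry assertions (the Fourier-exponent computation $\mu = (a-c) + (b+5c)\lambda$ killing all boundary terms except $\alpha(0,0,0)$ is exactly right, and a suitable $U$ with $U V_5 \cdot Z(\tau_1,\tau_2) = Z(\tau_2,\tau_1)$ does exist, e.g.\ the product of the conjugation by $\begin{psmallmatrix}1 & 0 \\ 1 & -1\end{psmallmatrix}$ with the matrix $R_u$ the paper uses later in Lemma~\ref{level5orders}), whereas the paper's proof leaves these two claims to the reader.
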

\begin{proof} The matrices $\begin{psmallmatrix} \tau & z \\ z & w \end{psmallmatrix} \in \mathbb{H}_2$ which satisfy $\tau - 5z + 5w = 0$ represent $\mathcal{H}_5$ under the action of $K(5)^+$. (See e.g. \cite{M}, Example 3.6.2.) Since our choice of $\lambda$ satisfies $5\lambda^2 - 5 \lambda + 1 = 0$ we see that $\begin{psmallmatrix} \tau_1 + \tau_2 & \lambda \tau_1 + \lambda' \tau_2 \\ \lambda \tau_1 + \lambda' \tau_2 & \lambda^2 \tau_1 + (\lambda')^2 \tau_2 \end{psmallmatrix}$ parameterizes exactly those matrices as $\tau_1,\tau_2$ run through $\mathbb{H}$. \\

Let $F$ be a paramodular form. To show that $P_5 F$ is a Hilbert modular form it is enough to consider the translations $T_b : (\tau_1,\tau_2) \mapsto (\tau_1+b,\tau_2 + b')$, $b \in \mathcal{O}_K$ and the inversion $S : (\tau_1,\tau_2) \mapsto (-1/\tau_1,-1/\tau_2)$, since these generate $\mathrm{SL}_2(\mathcal{O}_K)$. (This is true for all real-quadratic fields $K$ by a theorem of Vaserstein. For $K = \mathbb{Q}(\sqrt{5})$ this is easier to prove since $\mathcal{O}_K$ is euclidean.) The invariance of $P_5 F$ under $T_b$ follows from the invariance of $F$ under the translation by $\begin{psmallmatrix} \mathrm{Tr}(b) & \mathrm{Tr}(\lambda b) \\ \mathrm{Tr}(\lambda b) & \mathrm{Tr}(\lambda^2 b) \end{psmallmatrix} \in \begin{psmallmatrix} \mathbb{Z} & \mathbb{Z} \\ \mathbb{Z} & \frac{1}{5}\mathbb{Z} \end{psmallmatrix}$ , and the transformation of $P_5 F$ under $S$ follows from that of $F$ under $$\begin{psmallmatrix} 0 & 0 & 2 & 1 \\ 0 & 0 & 1 & 3/5 \\ -3 & 5 & 0 & 0 \\ 5 & -10 & 0 & 0 \end{psmallmatrix} \in K(5),$$ which maps $\begin{psmallmatrix} \tau_1 + \tau_2 & \lambda \tau_1 + \lambda' \tau_2 \\ \lambda \tau_1 + \lambda' \tau_2 & \lambda^2 \tau_1 + (\lambda')^2 \tau_2 \end{psmallmatrix}$ to $\begin{psmallmatrix} -\tau_1^{-1} -\tau_2^{-1} & -\lambda \tau_1^{-1} - \lambda' \tau_2^{-1} \\ -\lambda \tau_1^{-1} - \lambda' \tau_2^{-1} & -\lambda^2 \tau_1^{-1} - (\lambda')^2 \tau_2^{-1} \end{psmallmatrix}$.
\end{proof}

One would like to find paramodular forms which are mapped under $P_5$ to generators of the ring $M_*(\mathrm{SL}_2(\mathcal{O}_K))$ of Hilbert modular forms for $K = \mathbb{Q}(\sqrt{5})$. This is impossible (for example the Eisenstein series, a Hilbert modular form of weight two, does not lie in the image); still, to determine what forms do lie in the image of $P_5$ it is helpful to have the structure theorem (due to Gundlach \cite{Gu}): $$M_*(\mathrm{SL}_2(\mathcal{O}_K)) = \mathbb{C}[\mathbf{E}_2,\mathbf{E}_6,s_5^2] \oplus s_5 \mathbb{C}[\mathbf{E}_2,\mathbf{E}_6,s_5^2] \oplus s_{15} \mathbb{C}[\mathbf{E}_2,\mathbf{E}_6,s_5^2] \oplus s_5 s_{15} \mathbb{C}[\mathbf{E}_2,\mathbf{E}_6,s_5^2].$$ Here, $\mathbf{E}_2$ and $\mathbf{E}_6$ are the Eisenstein series and $s_5,s_{15}$ are antisymmetric and symmetric cusp forms of weights $5$ and $15$, respectively, which can be constructed as Borcherds products (as in the example at the end of \cite{BB}; in that notation $s_5 = \Psi_1$ and $s_{15} = \Psi_5$). Thus the direct sum above is the decomposition into symmetric and antisymmetric forms of even and odd weights. \\

Since the reduction argument we will follow uses Borcherds products $b_5,b_8,b_{12}$ whose divisors are supported on the surfaces $\mathcal{H}_1,\mathcal{H}_4,\mathcal{H}_5,\mathcal{H}_{20}$, it is helpful to know that paramodular forms of certain weights have forced zeros there.

\begin{lem}\label{level5orders} (i) Every even-weight paramodular form has even order on $\mathcal{H}_1$ and $\mathcal{H}_4$. \\ (ii) Every odd-weight paramodular form has odd order on $\mathcal{H}_1$ and $\mathcal{H}_4$. \\ (iii) Every $V_5$-symmetric even-weight paramodular form and every $V_5$-antisymmetric odd-weight paramodular form has even order on $\mathcal{H}_5$ and $\mathcal{H}_{20}$. \\ (iv) Every $V_5$-antisymmetric even-weight paramodular form and every $V_5$-symmetric odd-weight paramodular form has odd order on $\mathcal{H}_5$ and $\mathcal{H}_{20}$.
\end{lem}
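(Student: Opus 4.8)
The plan is to prove all four parts by a single mechanism: for each surface $\mathcal{H}_1,\mathcal{H}_4,\mathcal{H}_5,\mathcal{H}_{20}$ I would exhibit an involution in $K(5)$ or $K(5)^+$ that fixes the surface \emph{pointwise} and reflects the transverse direction, and then read off the parity of the vanishing order from a Taylor expansion in the normal coordinate. The involutions I have in mind are all of ``congruence type'': for $h \in \mathrm{GL}_2(\mathbb{R})$ the map $\begin{psmallmatrix} \tau & z \\ z & w \end{psmallmatrix} \mapsto h \begin{psmallmatrix} \tau & z \\ z & w \end{psmallmatrix} h^T$ is realized by $\begin{psmallmatrix} h & 0 \\ 0 & (h^T)^{-1} \end{psmallmatrix} \in \mathrm{Sp}_4(\mathbb{R})$, is linear in $\begin{psmallmatrix}\tau & z\\ z & w\end{psmallmatrix}$, and has automorphy factor the constant $\det\big((h^T)^{-1}\big) = \det(h)^{-1}$. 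If $h^2 = I$ and $\det h = -1$, then the induced action on the space of symmetric matrices has eigenvalues $1,1,-1$, so it is a reflection: its fixed locus is a codimension-one linear subspace (which it fixes pointwise) and it negates the normal direction, while its multiplier along the surface is the constant $\det(h)^{-1} = -1$. The Fricke involution itself is of this form, $V_5$ being the congruence by $g_0 = \begin{psmallmatrix} 0 & \sqrt5 \\ -1/\sqrt5 & 0 \end{psmallmatrix}$ (with $g_0^2 = -I$, $\det g_0 = 1$, hence trivial automorphy factor).

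Given such a reflection $\sigma$ and a paramodular form $F$ of weight $k$, I would expand $F$ near the fixed surface in a local normal coordinate $t$, writing $F = \psi\, t^m + O(t^{m+1})$ with $\psi \not\equiv 0$, where $m$ is the order of vanishing. Since $\sigma$ fixes the surface pointwise and sends $t \mapsto -t$ to leading order, $F(\sigma \cdot \tau) = (-1)^m \psi\, t^m + O(t^{m+1})$. Comparing with the transformation law and using that the multiplier equals $-1$ along the surface gives $(-1)^m = (-1)^k$, i.e. $m \equiv k \pmod 2$, whenever $\sigma \in K(5)$. When $\sigma \in K(5)^+ \setminus K(5)$ I would factor $\sigma = \rho V_5$ with $\rho \in K(5)$; since $V_5$ has trivial automorphy factor, $F(\sigma \cdot \tau) = \eta\,(-1)^k F(\tau)$, where $\eta = \pm 1$ is the $V_5$-eigenvalue, whence $(-1)^m = \eta\,(-1)^k$. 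Sorting the four sign combinations of $(\eta, k \bmod 2)$ reproduces (iii) and (iv) exactly.

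It then remains to produce the reflections. For $\mathcal{H}_1 = \{z = 0\}$ take $h = \mathrm{diag}(1,-1)$, and for $\mathcal{H}_4 = \{\tau = 2z\}$ take $h = \begin{psmallmatrix} 1 & 0 \\ 1 & -1 \end{psmallmatrix}$; both give elements of $K(5)$ with multiplier $-1$, so $m \equiv k$, which is (i) and (ii). For $\mathcal{H}_{20} = \{\tau = 5w\}$ the congruence by $\mathrm{diag}(1,-1)\,g_0 = \begin{psmallmatrix} 0 & \sqrt5 \\ 1/\sqrt5 & 0 \end{psmallmatrix}$, i.e. the composite $\iota_1 V_5$ with $\iota_1 = \mathrm{diag}(1,-1,1,-1)$, is an involution in $K(5)^+$ fixing that surface pointwise. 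For $\mathcal{H}_5 = \{\tau - 5z + 5w = 0\}$ I would solve for $h$ with $h^2 = I$, $\det h = -1$, whose eigenvectors are isotropic for the quadratic form $\begin{psmallmatrix} 1 & -5/2 \\ -5/2 & 5 \end{psmallmatrix}$ cutting out the surface; these eigenvectors are proportional to $(5 \pm \sqrt5, 2)$, and the resulting $h$ factors as $h = v\, g_0$ with $v = \begin{psmallmatrix} -2 & -5 \\ -1 & -2 \end{psmallmatrix} \in K(5)$, so $\sigma = \rho_v V_5 \in K(5)^+$ with multiplier $\det(v)^{-1}\det(g_0)^{-1} = -1$. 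The appearance of $\sqrt5$ in the eigenvectors for $\mathcal{H}_5$ and $\mathcal{H}_{20}$ is precisely why the Fricke involution, and hence the $V_5$-symmetry type, must enter for these two surfaces but not for $\mathcal{H}_1$ and $\mathcal{H}_4$.

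The main obstacle is the bookkeeping that pins the multiplier to exactly $-1$: I must confirm that $V_5$ carries trivial automorphy factor (so that the twisted reflections $\rho V_5$ inherit the multiplier of $\rho$ alone), and I must verify that each candidate genuinely fixes its surface \emph{pointwise} rather than merely setwise — which reduces to checking that the $+1$-eigenplane of the action on symmetric matrices coincides with the correct Humbert surface, using the isotropy of the eigenvectors. Solving for the Fricke-twisted reflection attached to $\mathcal{H}_5$ is the only genuinely non-routine step; the rest reduces to short matrix computations (verifying $h^2 = I$, membership in $K(5)$ via the divisibility pattern of $\sigma_5^{-1} M \sigma_5$, and the reflection's effect on the defining linear form) together with the comparison of leading Taylor coefficients.
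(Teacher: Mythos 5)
Your proof is correct, and it takes a genuinely different route from the paper's. For (i) and (ii) the paper argues via quasi-pullbacks: a weight-$k$ form vanishing to order $m$ along $\mathcal{H}_1$ (resp.\ $\mathcal{H}_4$) has quasi-pullback a modular form of weight $k+m$ for $\mathrm{SL}_2(\mathbb{Z})\times\mathrm{SL}_2(\mathbb{Z})$ (resp.\ $G$), and these groups admit no nonzero odd-weight forms; your reflection-plus-Taylor-expansion argument is the same parity mechanism made local and explicit, without invoking quasi-pullbacks. The divergence is more substantial for (iii) and (iv): the paper first reduces to $V_5$-antisymmetric forms of even weight by multiplying with $b_5$ and $b_{12}$, uses the pointwise-fixing involution $R_uV_5$ (your $\rho_v V_5$, with $u=-v$) only to conclude \emph{vanishing} along $\mathcal{H}_5$, and then bootstraps the parity by multiplying with the meromorphic quotients $b_5^2/b_8$ and $b_8/b_{12}$, which have poles along $\mathcal{H}_5$ resp.\ $\mathcal{H}_{20}$. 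Your local expansion $F=\psi t^m+O(t^{m+1})$ extracts $(-1)^m=\eta(-1)^k$ in one step, avoiding both the reduction to a single symmetry type and any reliance on the existence of those particular Borcherds products; this is a genuine simplification and treats all four surfaces uniformly. Your verifications check out: $\mathrm{diag}(1,-1,1,-1)$ and the block matrix built from $h=\begin{psmallmatrix}1&0\\1&-1\end{psmallmatrix}$ lie in $K(5)$ and fix $\{z=0\}$, $\{\tau=2z\}$ pointwise with factor $(-1)^k$; the eigenvectors $(5\pm\sqrt5,2)$ of $vg_0$ give $u_\pm u_\pm^T$ spanning the plane $\tau-5z+5w=0$; and $V_5$ carries trivial automorphy factor in the paper's normalization. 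Two points deserve an explicit sentence in a final write-up: since the action on symmetric matrices is linear and involutive, the normal coordinate is negated \emph{exactly} (not merely to leading order), so the comparison of Taylor coefficients is immediate and in fact shows every nonzero coefficient has index congruent to $k$ (resp.\ $k+\tfrac{1-\eta}{2}$) mod $2$; and for prime level $\mathcal{H}_5$ and $\mathcal{H}_{20}$ are irreducible, so the order computed on the chosen representative branch is the order on the Humbert surface.
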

\begin{proof} Claims (i) and (ii) follow from the fact that the quasi-pullback to $\mathcal{H}_1$ or to $\mathcal{H}_4$ must have even weight, since $\mathrm{SL}_2(\mathbb{Z}) \times \mathrm{SL}_2(\mathbb{Z})$ and $G$ do not admit nonzero modular forms of odd weight. \\

Claim (iii) follows from claim (iv). Indeed if $F$ is a paramodular form as in claim (iii) then $b_{12} F$ is a paramodular form of the type considered in claim (iv), so $$\mathrm{ord}_{\mathcal{H}_5}(b_{12} F) = 3 + \mathrm{ord}_{\mathcal{H}_5} F, \; \; \mathrm{ord}_{\mathcal{H}_{20}}(b_{12} F) = 1 + \mathrm{ord}_{\mathcal{H}_{20}} F$$ are odd. Therefore we only need to show claim (iv). In fact it is enough to prove this when $F$ is antisymmetric and has even weight, since a similar argument with the product $b_5 F$ will then cover the case that $F$ is symmetric and has odd weight. \\

Let $F$ be an antisymmetric even-weight paramodular form. First we show that the pullback $P_5 F$ is zero. Define $u = \begin{psmallmatrix} 2 & -5 \\ -1 & 2 \end{psmallmatrix} \in \mathrm{GL}_2(\mathbb{Z})$. Since the upper-right entry of $u$ is a multiple of $5$, the paramodular group contains the conjugation map $R_u = \begin{psmallmatrix} u & 0 \\ 0 & (u^{-1})^T \end{psmallmatrix} : \tau \mapsto u\tau u^T.$ The composition $R_u V_5$ fixes the Humbert surface $\mathcal{H}_5$ pointwise, i.e. for any $\tau_1,\tau_2 \in \mathbb{H}$, letting $\lambda = \frac{5 - \sqrt{5}}{10}$ as above, one can compute $$R_u V_5 \cdot \begin{psmallmatrix} \tau_1 + \tau_2 & \lambda \tau_1 + \lambda' \tau_2 \\ \lambda \tau_1 + \lambda' \tau_2 & \lambda^2 \tau_1 + (\lambda')^2 \tau_2 \end{psmallmatrix} = R_u \cdot \begin{psmallmatrix} 5\lambda^2 \tau_1 + 5(\lambda')^2 \tau_2 & -\lambda \tau_1 - \lambda' \tau_2 \\ -\lambda \tau_1 - \lambda' \tau_2 & (1/5)\tau_1 + (1/5)\tau_2 \end{psmallmatrix} =  \begin{psmallmatrix} \tau_1 + \tau_2 & \lambda \tau_1 + \lambda' \tau_2 \\ \lambda \tau_1 + \lambda' \tau_2 & \lambda^2 \tau_1 + (\lambda')^2 \tau_2 \end{psmallmatrix}.$$ For antisymmetric forms $F$ of weight $k$ and $\tau \in \mathcal{H}_5$ we find $$F(R_u V_5 \cdot \tau) = (-1)^k (F | R_u V_5)(\tau) = (-1)^{k+1} F(\tau).$$ When $k$ is even this forces $F = 0$ along $\mathcal{H}_5$. \\

The claim regarding the order along $\mathcal{H}_5$ follows from the existence of a meromorphic paramodular form $b_5^2 / b_8$ with only a double pole along $\mathcal{H}_5$. If a form $F$ as in the claim vanishes to some even order $k$ along $\mathcal{H}_5$, then the product $F \cdot (b_5^2 / b_8)^{k/2}$ is holomorphic, antisymmetric, and by the previous paragraph must vanish along $\mathcal{H}_5$. But then $F$ must have vanished to order at least $k+1$. \\

The argument for $\mathcal{H}_{20}$ is similar. One shows that $F$ must be zero on $\mathcal{H}_{20}$ (see \cite{M}, section 7.2). Then if $F$ vanishes along $\mathcal{H}_{20}$ to some even order $k$ then $F \cdot (b_8 / b_{12})^k$ is holomorphic and also antisymmetric so it has a forced zero along $\mathcal{H}_{20}$, i.e. $\mathrm{ord}_{\mathcal{H}_{20}}(F) \ge 1 + k \cdot \mathrm{ord}_{\mathcal{H}_{20}}(b_{12}/b_8) = k+1$.
\end{proof}

\textbf{Symmetric even-weight paramodular forms.} In this section we will compute a system of generators for the ring of symmetric paramodular forms of even weight. This is done by induction on the weight. Every weight zero paramodular form is constant. In general we will find a family of generators containing $b_8$ such that, given a symmetric even-weight paramodular form $F$, some polynomial expression $P$ in those generators coincides with $F$ to order at least two along $\mathcal{H}_4$ and $\mathcal{H}_5$ and to order at least four along the diagonal $\mathcal{H}_1$. The quotient $\frac{F - P}{b_8}$ is then holomorphic and has smaller weight than $F$, so by induction $\frac{F - P}{b_8}$ and therefore also $F$ is a polynomial expression in those generators.

\begin{lem} Let $F$ be a symmetric even-weight paramodular form. There is a polynomial $P$ such that $F - P(\mathcal{E}_4,\mathcal{E}_6,g_6,g_8,g_{10})$ has at least a double zero along $\mathcal{H}_4$.
\end{lem}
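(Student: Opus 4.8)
The plan is to push $F$ through the pullback $P_4$, match it against the pullbacks of the five candidate forms inside the ring $A_*^{sym}$, and then convert the resulting vanishing on $\mathcal{H}_4$ into a double zero by a parity argument. Since all five forms $\mathcal{E}_4,\mathcal{E}_6,g_6,g_8,g_{10}$ have even weight, any weighted-homogeneous polynomial in them of the weight of $F$ is again a symmetric even-weight paramodular form, so it suffices to find $P$ with $P_4\bigl(F - P(\mathcal{E}_4,\mathcal{E}_6,g_6,g_8,g_{10})\bigr)=0$ and then invoke Lemma~\ref{level5orders}(i).

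First I would place $P_4 F$ in $A_*^{sym}$. By the lemma computing $P_4(F|V_5)$, the $V_5$-symmetry of $F$ makes $P_4 F$ symmetric, and by the proposition controlling the boundary behavior of pullbacks (the images $\Phi_1 f(\tau/2),\Phi_2 f(\tau/2)$ have level one) the form $P_4 F$ lies in $A_*^{sym}$. The heart of the argument is then to show that the five pullbacks $P_4\mathcal{E}_4, P_4\mathcal{E}_6, P_4 g_6, P_4 g_8, P_4 g_{10}$ generate $A_*^{sym}$. By the choices in this section the last three satisfy $P_4 g_6=\Delta_6$, $P_4 g_8=X_2\Delta_6$, $P_4 g_{10}=X_4\Delta_6$, which are three of the five generators in Proposition~\ref{genA}; what remains is to reach the two non-cuspidal generators $X_2^2-48X_4$ and $X_2^3-72X_2X_4$ of weights $4$ and $6$.

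Here I would use the low dimensions $\dim A_4^{sym}=1$ and $\dim A_6^{sym}=2$ together with the Siegel $\Phi$-operator. From the proof of the boundary proposition, $\Phi_2 P_4\mathcal{E}_k(\tau/2)=\Phi\mathcal{E}_k(\tau)=E_k(\tau)\neq 0$, while from the proof of Proposition~\ref{genA} the generator $X_2^2-48X_4$ has $\Phi$-image $E_4(2\tau)$, the generator $X_2^3-72X_2X_4$ has $\Phi$-image $E_6(2\tau)$, and $\Delta_6$ is cuspidal. Thus $P_4\mathcal{E}_4$ is a nonzero element of the one-dimensional space $A_4^{sym}$, hence spans it; and $P_4\mathcal{E}_6$ has nonzero $\Phi_2$-image, so together with $\Delta_6=P_4 g_6$ it spans the two-dimensional space $A_6^{sym}$. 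Consequently the subring generated by the five pullbacks contains all five generators of $A_*^{sym}$ and therefore equals $A_*^{sym}$. Since $P_4$ is a ring homomorphism, $P_4 F \in A_*^{sym}$ equals $P_4\, P(\mathcal{E}_4,\dots,g_{10})$ for a suitable weighted-homogeneous polynomial $P$, and $G := F - P(\mathcal{E}_4,\dots,g_{10})$ satisfies $P_4 G=0$, so $G$ vanishes on $\mathcal{H}_4$.

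To finish, $G$ is a symmetric even-weight paramodular form, so by Lemma~\ref{level5orders}(i) its order of vanishing along $\mathcal{H}_4$ is even; being at least one, it is at least two. I expect the only real obstacle to be the generation step — specifically, confirming that $P_4\mathcal{E}_4$ and $P_4\mathcal{E}_6$ are not accidentally cuspidal, since a cuspidal $P_4\mathcal{E}_4$ would leave $X_2^2-48X_4$ out of reach. The dimension counts reduce this to the single nonvanishing statement $\Phi\mathcal{E}_k=E_k$, after which the promotion from a simple to a double zero is free from the parity in Lemma~\ref{level5orders}(i).
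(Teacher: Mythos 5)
Your proposal is correct in outline and shares the paper's overall strategy (pull back via $P_4$, match inside $A_*^{sym}$, then upgrade the simple zero to a double zero by the parity statement of Lemma~\ref{level5orders}(i)), but it verifies the key generation step differently. The paper computes the Fourier expansion of $P_4$ applied to a Gritsenko lift explicitly (using the Maass relations) and thereby identifies all five pullbacks exactly, e.g.\ $P_4\mathcal{E}_6 = X_2^3 - 72X_2X_4 - \tfrac{319680}{521}\Delta_6$. You instead avoid the exact identification of $P_4\mathcal{E}_4$ and $P_4\mathcal{E}_6$ by a soft argument: $\dim A_4^{sym}=1$ and $\dim A_6^{sym}=2$, and $\Phi_2 P_4\mathcal{E}_k(\tau/2)=\Phi\mathcal{E}_k=E_k\neq 0$ shows these pullbacks are not cuspidal, so together with $\Delta_6$ they span the relevant graded pieces. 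That is a clean simplification and it buys you independence from the exact rational coefficients. The one place your argument still leans on unproved input is the claim that $P_4 g_6$, $P_4 g_8$, $P_4 g_{10}$ are (nonzero multiples of) $\Delta_6$, $X_2\Delta_6$, $X_4\Delta_6$: you cite ``the choices in this section,'' but in the paper that statement is only a stated motivation whose actual verification is the Fourier computation carried out inside this very proof. A dimension count does reduce what must be checked (each of these is a cusp form landing in a one-dimensional cuspidal piece of $A_*^{sym}$ in its weight), but one still has to confirm nonvanishing of $P_4 g_6$, $P_4 g_8$, $P_4 g_{10}$ from a few Fourier coefficients, so some explicit computation of the kind the paper performs cannot be entirely avoided. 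With that verification supplied, your proof is complete.
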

\begin{proof} The point is that the images of $\mathcal{E}_4,\mathcal{E}_6,g_6,g_8,g_{10}$ under the pullback $P_4$ generate all possible pullbacks. If $F(\begin{psmallmatrix} \tau & z \\ z & w \end{psmallmatrix}) = \sum_{a,b,c} \alpha(a,b,c) q^a r^b s^{5c}$ is a paramodular form then \begin{align*} P_4 F(\tau_1,\tau_2) &= F( \begin{psmallmatrix} 2\tau_1 & \tau_1 \\ \tau_1 & \tau_1/2 + \tau_2/10 \end{psmallmatrix}) \\ &= \Big( \alpha(0,0,0) + \alpha(1,0,0)q_1^2 + O(q_1^4) \Big) \\ &\quad + \Big( (\alpha(1,-4,1) + \alpha(2,-6,1))q_1^{1/2} + (\alpha(1,-3,1) + \alpha(2,-5,1) + \alpha(3,-7,1)) q_1^{3/2} + O(q_1^{5/2}) \Big) q_2^{1/2} \\ &\quad + O(q_2). \end{align*} In particular, if $F$ is the Gritsenko lift of a Jacobi form $\phi(\tau,z) = \sum_{n,r} c(n,r) q^n \zeta^r \in J_{k,5}$, then the Maass relations allow us to simplify this to $$P_4 F(\tau_1,\tau_2) = -\frac{B_k}{2k} c(0,0) \Big( 1 - \frac{2k}{B_k} q_1^2 + O(q_1^4) \Big) + (-1)^k \Big( 2c(1,4) q_1^{1/2} + (2c(1,3) + c(2,5))q_1^{3/2} + O(q_1^{5/2}) \Big) q_2^{1/2} + O(q_2).$$ In this way we compute \begin{align*} P_4 \mathcal{E}_4(\tau_1,\tau_2) &= (1 + 240q_1^2 + ...) + (480q_1^{1/2} + 13440q_1^{3/2} + ...)q_2^{1/2} + O(q_2); \\ P_4 \mathcal{E}_6(\tau_1,\tau_2) &= (1 - 504q_1^2 - ...) + (55440/521 q_1^{1/2} + 16140096/521 q_1^{3/2} + ...)q_2^{1/2} + O(q_2); \\ P_4 g_6(\tau_1,\tau_2) &= (2q_1^{1/2} - 24q_1^{3/2} + ...)q_2^{1/2} + O(q_2); \\ P_4 g_8(\tau_1,\tau_2) &= (2q_1^{1/2} + 24q_1^{3/2} + ...)q_2^{1/2} + O(q_2); \\ P_4 g_{10}(\tau_1,\tau_2) &= (16q_1^{3/2} + ...)q_2^{1/2} + O(q_2), \end{align*} and comparing these coefficients with the generators of $A_*^{sym}$ found in Proposition~\ref{genA} allows us to identify $$P_4 \mathcal{E}_4 = X_2^2 - 48X_4, \; P_4 \mathcal{E}_6 = X_2^3 - 72X_2 X_4 - \frac{319680}{521} \Delta_6, \; P_4 g_6 = 2\Delta_6, \; P_4 g_8 = 2X_2 \Delta_6, \; P_4 g_{10} = 16 X_4 \Delta_6.$$ It is clear that these forms also generate $A_*^{sym}$. \\

Since $P_4 F \in A_*^{sym}$, it follows that there is some polynomial $P$ for which $$P_4 F = P(P_4 \mathcal{E}_4,P_4 \mathcal{E}_6,P_4 g_6,P_4 g_8,P_4 g_{10}) = P_4 P(\mathcal{E}_4,\mathcal{E}_6,g_6,g_8,g_{10}).$$ The difference $F - P(\mathcal{E}_4,\mathcal{E}_6,g_6,g_8,g_{10})$ has a zero along $\mathcal{H}_4$; and as we argued in Lemma~\ref{level5orders}, every symmetric even-weight form has even order on $\mathcal{H}_4$ so this must be at least a double zero.
\end{proof}

\begin{lem} Let $F$ be a symmetric even-weight paramodular form. There is a polynomial $P$ such that $$F - P(\mathcal{E}_4,\mathcal{E}_6,g_6,g_8,b_5^2,g_{10},b_5 g_7)$$ has double zeros on both $\mathcal{H}_4$ and $\mathcal{H}_5$.
\end{lem}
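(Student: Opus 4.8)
The plan is to build on the previous lemma, which already produced a polynomial $Q$ in $\mathcal{E}_4,\mathcal{E}_6,g_6,g_8,g_{10}$ so that $F - Q$ vanishes to order at least two on $\mathcal{H}_4$. The goal now is to correct the behavior along $\mathcal{H}_5$ as well, without spoiling the double zero on $\mathcal{H}_4$. The natural strategy is therefore to understand the pullback $P_5$ on the relevant generators and to arrange a polynomial correction whose $P_5$-image matches that of $F$, while each correction term is chosen to already vanish doubly on $\mathcal{H}_4$.

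First I would analyze the pullback $P_5 F$ using Lemma~\ref{P5}. Since $F$ is symmetric (hence $V_5$-symmetric after suitable normalization, or at least its even-weight behavior is controlled by Lemma~\ref{level5orders}), $P_5 F$ lands in the ring of symmetric Hilbert modular forms for $\mathbb{Q}(\sqrt 5)$, whose structure is given by Gundlach's theorem: the even-weight symmetric part is $\mathbb{C}[\mathbf{E}_2,\mathbf{E}_6,s_5^2]$ together with the $s_{15}$-summand. The key computation is to identify the $P_5$-images of the available forms. I expect $P_5 b_5^2$ to be essentially $s_5^2$ (since $b_5$ is the Borcherds product with a simple zero configuration and $s_5$ is the weight-$5$ antisymmetric cusp form), and $P_5(b_5 g_7)$ to supply an $s_5 \cdot(\text{something})$ contribution of the correct weight; meanwhile $P_5 \mathcal{E}_4, P_5 \mathcal{E}_6$ should recover the Eisenstein generators $\mathbf{E}_2^2$ (or the relevant even-weight Eisenstein combinations) and $\mathbf{E}_6$, and $P_5 g_6, P_5 g_8, P_5 g_{10}$ should yield further cusp forms in the $\mathbf{E}_2,\mathbf{E}_6,s_5^2$-subring. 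The content of the lemma is precisely that these seven pullbacks generate enough of $M_*^{\mathrm{sym}}(\mathrm{SL}_2(\mathcal{O}_K))$ to hit any $P_5 F$ that can actually occur.

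The heart of the argument is then a dimension/surjectivity count: I would show that the $P_5$-images of $\mathcal{E}_4,\mathcal{E}_6,g_6,g_8,b_5^2,g_{10},b_5 g_7$ generate the subring of symmetric Hilbert modular forms that is compatible with the boundary constraints (the analogue for $P_5$ of the condition defining $A_*^{\mathrm{sym}}$ for $P_4$). Granting this, there is a polynomial $R$ in these seven generators with $P_5 R = P_5 F$, so $F - R$ vanishes on $\mathcal{H}_5$; by Lemma~\ref{level5orders}(iii) this zero is automatically of even order, hence at least double. To keep the already-established double zero on $\mathcal{H}_4$, I would note that the four new terms $b_5^2$ and $b_5 g_7$ (beyond the five forms of the previous lemma) each pull back to zero under $P_4$: indeed $b_5$ has a zero on $\mathcal{H}_4$ by its divisor $7\mathcal{H}_1 + \mathcal{H}_4$, so both $b_5^2$ and $b_5 g_7$ vanish doubly on $\mathcal{H}_4$, and any monomial involving them inherits this. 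One then combines the correction $Q$ from the previous lemma (handling $\mathcal{H}_4$) with the correction handling $\mathcal{H}_5$, taking care that the $\mathcal{H}_5$-correction is built only from $\mathcal{H}_4$-vanishing terms.

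The main obstacle will be the surjectivity/generation claim for $P_5$: verifying that the seven listed pullbacks actually span the correct subring of symmetric Hilbert modular forms for $\mathbb{Q}(\sqrt 5)$ in every even weight. Unlike the $P_4$ case, where Proposition~\ref{genA} gave a clean Hilbert-series identity, here one must pin down which elements of $\mathbb{C}[\mathbf{E}_2,\mathbf{E}_6,s_5^2]\oplus s_5 s_{15}\mathbb{C}[\dots]$ arise as pullbacks of \emph{holomorphic} paramodular forms, and confirm the seven generators suffice. I expect this to reduce to comparing Hilbert series or checking a finite list of low weights explicitly (comparing Fourier expansions, as was done for $P_4$), but the bookkeeping of which Gundlach generators are hit — and the verification that no obstruction from the weight-two Eisenstein series $\mathbf{E}_2$ intrudes — is where the real care is required.
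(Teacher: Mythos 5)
Your outline matches the paper's in broad strokes (correct the $\mathcal{H}_4$ behavior first via the previous lemma, then fix $\mathcal{H}_5$ using corrections built from $b_5^2$ and $b_5 g_7$, invoke the even-order statement of Lemma~\ref{level5orders} to upgrade a zero on $\mathcal{H}_5$ to a double zero, and note that $b_5^2$ and $b_5g_7$ vanish doubly on $\mathcal{H}_4$ so the second correction does not spoil the first). But the step you defer as "the main obstacle" --- identifying which symmetric Hilbert modular forms for $\mathbb{Q}(\sqrt5)$ actually need to be matched --- is the entire content of the lemma, and the way you frame it (show that the seven pullbacks generate the subring of symmetric Hilbert modular forms compatible with boundary constraints) is both unproven and not the right target: the seven pullbacks certainly do not generate all of $\mathbb{C}[\mathbf{E}_2,\mathbf{E}_6,s_5^2]$, since $\mathbf{E}_2$ is unreachable (there are no nonzero paramodular forms of weight $2$), and pinning down the exact image of $P_5$ in general is precisely the open-ended problem the paper avoids.

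The missing idea is a geometric reduction that shrinks the target. Once $F$ is replaced by $F - Q$ with $Q=Q(\mathcal{E}_4,\mathcal{E}_6,g_6,g_8,g_{10})$ vanishing on $\mathcal{H}_4$, one observes that $\mathcal{H}_4\cap\mathcal{H}_5$ is the diagonal inside the Hilbert modular surface $\mathcal{H}_5$ and that $\mathcal{H}_4$ contains the one-dimensional cusps; hence $P_5(F-Q)$ is a symmetric Hilbert \emph{cusp} form vanishing on the diagonal, which by Gundlach's structure theorem forces $P_5(F-Q)\in s_5^2\,\mathbb{C}[\mathbf{E}_2,\mathbf{E}_6,s_5^2]$. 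Splitting that module by the parity of the exponent of $\mathbf{E}_2$ gives $s_5^2\,\mathbb{C}[\mathbf{E}_2^2,\mathbf{E}_6,s_5^2]\oplus \mathbf{E}_2 s_5^2\,\mathbb{C}[\mathbf{E}_2^2,\mathbf{E}_6,s_5^2]$, so one only needs the four identifications $P_5\mathcal{E}_4=\mathbf{E}_2^2$, $P_5(\mathcal{E}_6-\tfrac{544320}{34907}g_6)=\mathbf{E}_6$, $P_5(b_5^2)=s_5^2$, $P_5(b_5g_7)=\mathbf{E}_2 s_5^2$ --- never $\mathbf{E}_2$ itself --- and the correction has the shape $b_5^2P_1+b_5g_7P_2$ with $P_1,P_2$ polynomials in $\mathcal{E}_4,\mathcal{E}_6,g_6,b_5^2,b_5g_7$. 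Without the observation that the $\mathcal{H}_4$-reduction already forces $P_5$ into the $s_5^2$-ideal, your argument does not close; with it, the "$\mathbf{E}_2$ obstruction" you flagged disappears automatically. (Also a small slip: there are two new correction terms beyond the previous lemma's five forms, not four.)
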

\begin{proof} By the previous lemma we can assume without loss of generality that $F$ vanishes along $\mathcal{H}_4$. Since $\mathcal{H}_4 \cap \mathcal{H}_5$ is the diagonal within $\mathcal{H}_5$ (interpreted as the Hilbert modular surface for $\mathbb{Q}(\sqrt{5})$) and since $\mathcal{H}_4$ contains the one-dimensional cusps, it follows that $P_5 F$ is a Hilbert cusp form for $\mathrm{SL}_2(\mathcal{O}_K)$ which is zero along the diagonal. From Gundlach's structure theorem we see that $P_5 F \in s_5^2 \mathbb{C}[\mathbf{E}_2,\mathbf{E}_6,s_5^2].$ \\

It turns out that $\mathbf{E}_2$ does not arise as a pullback. (In fact the table of Borcherds products (Table 1) immediately shows that there are no nonzero paramodular forms $F$ of weight two. One can assume that $F$ is symmetric or anti-symmetric by splitting into parts. The quasi-pullback of $F$ to the diagonal would be a cusp form of weight at least 12, so $\mathrm{ord}_{\mathcal{H}_1} F \ge 10$. Therefore the quasi-pullback of $F$ to $\mathcal{H}_4$ would also be a cusp form so it would have weight at least 6, so $\mathrm{ord}_{\mathcal{H}_4} F \ge 4$. But then $F / b_5$ would have negative weight and be holomorphic by Koecher's principle, which implies that it is zero. The same argument also gives $S_4(K(5)) = 0$.) Instead we decompose $$s_5^2 \mathbb{C}[\mathbf{E}_2,\mathbf{E}_6,s_5^2] = s_5^2 \mathbb{C}[\mathbf{E}_2^2,\mathbf{E}_6,s_5^2] \oplus \mathbf{E}_2 s_5^2 \mathbb{C}[\mathbf{E}_2^2,\mathbf{E}_6,s_5^2]$$ by sorting monomials according to whether the exponent of $\mathbf{E}_2$ in them is even or odd, and are left to find paramodular forms of weights $4,6,10,12$ which pullback to $\mathbf{E}_2^2,\mathbf{E}_6,s_5^2,\mathbf{E}_2 s_5^2$. We can compute $$P_5 \mathcal{E}_4 = \mathbf{E}_2^2,\; P_5 b_5 = s_5, \; P_5\Big(\mathcal{E}_6 - \frac{544320}{34907}g_6\Big) = \mathbf{E}_6,\; P_5 g_7 = \mathbf{E}_2 s_5,$$ (all of which except the third require no computation as the target space of Hilbert modular forms or cusp forms is one-dimensional), so there are polynomials $P_1,P_2$ such that $$F - b_5^2 P_1(\mathcal{E}_4,\mathcal{E}_6,g_6,b_5^2,b_5 g_7) - b_5 g_7 P_2(\mathcal{E}_4,\mathcal{E}_6,g_6,b_5^2,b_5g_7)$$ is zero along $\mathcal{H}_5$. Since its order along $\mathcal{H}_5$ is even (by Lemma~\ref{level5orders}), this must be at least a double zero. Moreover, this expression continues to have a double zero along $\mathcal{H}_4$ since $b_5^2$ and $b_5 g_7$ do (indeed, $b_5$ and $g_7$ have odd weight and therefore forced zeros on $\mathcal{H}_4$).
\end{proof}

\begin{lem}\label{symmetriclvl5} Define $h_{10} = \frac{g_7 b_8}{b_5}$ and $h_{12} = \frac{g_7^2 b_8}{b_5^2}$. Every symmetric even-weight paramodular form is a polynomial expression in the generators $$\mathcal{E}_4,\mathcal{E}_6,g_6,b_8,g_8,b_5^2,g_{10},h_{10},b_5 g_7, h_{12}.$$
\end{lem}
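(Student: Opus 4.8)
The plan is to induct on the weight, reducing against the form $b_8$, whose divisor is $4\mathcal{H}_1 + 2\mathcal{H}_4 + 2\mathcal{H}_5$. Given a symmetric even-weight form $F$ of weight $k$, I will construct a polynomial $P$ in the listed generators so that $F - P$ vanishes to order at least $2$ on each of $\mathcal{H}_4$ and $\mathcal{H}_5$ and to order at least $4$ on the diagonal $\mathcal{H}_1$; then $(F - P)/b_8$ is a holomorphic symmetric even-weight form of strictly smaller weight and the inductive hypothesis (base case: constants in weight $0$) applies. The first step is the preceding lemma, which supplies a polynomial $P_0$ in $\mathcal{E}_4, \mathcal{E}_6, g_6, g_8, b_5^2, g_{10}, b_5 g_7$ with $G := F - P_0$ vanishing to order at least $2$ on both $\mathcal{H}_4$ and $\mathcal{H}_5$. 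All that remains is to correct $G$ along the diagonal without disturbing these zeros.

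The key point is that $b_8$, $h_{10} = (g_7/b_5) b_8$ and $h_{12} = (g_7/b_5)^2 b_8$ form the chain obtained by multiplying $b_8$ by the meromorphic weight-two form $g_7/b_5$, which has a double pole on $\mathcal{H}_1$ and is nonvanishing on $\mathcal{H}_4$ and $\mathcal{H}_5$. Consequently all three vanish to order exactly $2$ on $\mathcal{H}_4$ and $\mathcal{H}_5$, while their orders on $\mathcal{H}_1$ are $4, 2$ and $0$. Writing $G(\begin{psmallmatrix} \tau & z \\ z & w/5\end{psmallmatrix}) = g_0(\tau, w) + g_2(\tau, w) z^2 + O(z^4)$, in which only even powers of $z$ appear because $G$ has even weight, the coefficient $g_0$ is the Witt image $P_1 G$, a symmetric modular form for $\mathrm{SL}_2(\mathbb{Z}) \times \mathrm{SL}_2(\mathbb{Z})$. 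I would first subtract $h_{12} \cdot Q_1$ with $Q_1$ a polynomial in the base generators chosen so that $P_1(h_{12} Q_1) = (P_1 h_{12})(P_1 Q_1)$ equals $P_1 G$; since $h_{12}$ is nonzero on $\mathcal{H}_1$ this makes $P_1$ of the difference vanish, so by Lemma~\ref{level5orders} its order on $\mathcal{H}_1$ is at least $2$. I would then subtract $h_{10} \cdot Q_2$ to cancel the $z^2$-coefficient (a quasi-pullback of weight $k + 2$): because $h_{10}$ vanishes to order exactly $2$ on $\mathcal{H}_1$ its leading diagonal coefficient is nonzero, while $P_1 h_{10} = 0$ so the previous cancellation is preserved, and the parity lemma again promotes the order from $3$ to $4$. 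Both subtracted terms keep the double zeros on $\mathcal{H}_4, \mathcal{H}_5$ because $h_{10}$ and $h_{12}$ do, and both are genuine polynomials in the listed generators.

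The main obstacle is justifying that these two cancellations can actually be carried out inside the ring, i.e. that $P_1 G$ lies in the span of the $P_1(h_{12} Q_1)$ and the $z^2$-coefficient of $G$ in the span of the corresponding expressions built from $h_{10}$. This rests on two computations: first, that the diagonal restrictions $P_1 \mathcal{E}_4, P_1 \mathcal{E}_6, P_1 g_6, P_1 g_8, P_1 g_{10}$ generate enough of $M_*^{sym}(\mathrm{SL}_2(\mathbb{Z}) \times \mathrm{SL}_2(\mathbb{Z}))$ to realize $P_1 Q_1, P_1 Q_2$ as the symmetric forms of the needed weights (read off from Fourier expansions, as the $P_4$- and $P_5$-images were in the previous lemmas); and second, that the relevant quotients remain holomorphic, which reduces to checking that $h_{10}$ and $h_{12}$ carry no zeros on the diagonal beyond those forced by the common vanishing of $G$ on $\mathcal{H}_4$ and $\mathcal{H}_5$ --- equivalently, that $g_7$ has no unexpected components in its divisor. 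The low-weight cases, where there is no room for nonconstant $Q_1, Q_2$, are checked directly. Granting this, the induction closes and $F$ is exhibited as a polynomial in the ten generators.
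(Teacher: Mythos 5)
Your proposal is correct and follows essentially the same route as the paper: reduce to a form $G$ with double zeros on $\mathcal{H}_4$ and $\mathcal{H}_5$ via the preceding lemmas, cancel the order-$0$ and order-$2$ parts of $G$ along the diagonal against $h_{12}$ and $h_{10}$ times polynomials in $\mathcal{E}_4,\mathcal{E}_6,h_{12}$ (using the parity lemma to promote the order from $1$ to $2$ and from $3$ to $4$), and divide by $b_8$. The two facts you defer are precisely the ones the paper supplies: the diagonal matching is possible because $P_1 G$ is automatically a \emph{cusp} form for $\mathrm{SL}_2(\mathbb{Z})\times\mathrm{SL}_2(\mathbb{Z})$ (since $G$ vanishes on $\mathcal{H}_4$, which contains the one-dimensional cusps) and hence a multiple of $\Delta\otimes\Delta$, which is proportional to $P_1 h_{12}$ --- note that $P_1 g_6, P_1 g_8, P_1 g_{10}$ all vanish and contribute nothing here --- and the exactness $\mathrm{ord}_{\mathcal{H}_1} g_7 = 5$ is proved in the paper not by a Fourier computation but by a weight count: if the order were at least $7$ then $g_7/b_5$ would be a holomorphic form of weight $2$ whose forced vanishing orders on $\mathcal{H}_1$ and $\mathcal{H}_4$ make it divisible by $b_5$ once more, producing a holomorphic form of negative weight.
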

\begin{proof} Induction on the weight. This is trivial when the weight is negative or zero. \\

Let $F$ be a symmetric even-weight paramodular form. By the previous lemmas we can assume without loss of generality that $F$ has vanishes to order at least two on $\mathcal{H}_4$ and $\mathcal{H}_5$. To reduce by the Borcherds product $b_8$ with $\mathrm{div} \, b_8 = 4\mathcal{H}_1 + 2\mathcal{H}_4 + 2\mathcal{H}_5$ we need to subtract off expressions from $F$ which also vanish along $\mathcal{H}_4$ and $\mathcal{H}_5$ and which eliminate the cases that $F$ is nonzero or has only a double zero along the diagonal. \\

The pullback $P_1 F$ to $\mathcal{H}_1$ is a cusp form for $\mathrm{SL}_2(\mathbb{Z}) \times \mathrm{SL}_2(\mathbb{Z})$. Using the fact that $M_*(\mathrm{SL}_2(\mathbb{Z}))$ is the polynomial ring $\mathbb{C}[E_4,E_6]$ it is not difficult to see that the ring of symmetric modular forms for $\mathrm{SL}_2(\mathbb{Z}) \times \mathrm{SL}_2(\mathbb{Z})$ is $$M^{sym}_*(\mathrm{SL}_2(\mathbb{Z}) \times \mathrm{SL}_2(\mathbb{Z})) = \mathbb{C}[E_4 \otimes E_4, E_6 \otimes E_6, \Delta \otimes \Delta],$$ where we denote $f \otimes f(\tau_1,\tau_2) = f(\tau_1)f(\tau_2)$. Here $E_4,E_6,\Delta = \frac{E_4^3 - E_6^2}{12^3}$ are the level one Eisenstein series and discriminant. The ideal of cusp forms is generated by $\Delta \otimes \Delta$. Comparing constant terms shows that $P_1 \mathcal{E}_4 = E_4 \otimes E_4$ and $P_1 \mathcal{E}_6 = E_6 \otimes E_6$, so our task is to find paramodular forms in weights 10 and 12 with double zeros along $\mathcal{H}_4$ and $\mathcal{H}_5$ whose (quasi-)pullback to $\mathcal{H}_1$ is $\Delta \otimes \Delta$. Since the space of weight 12 cusp forms for $\mathrm{SL}_2(\mathbb{Z}) \times \mathrm{SL}_2(\mathbb{Z})$ is one-dimensional, it is enough to produce any paramodular forms in weights 10 and 12 with orders along $\mathcal{H}_1$ exactly 2 and 0, respectively. \\

The quotients $h_{10}$ and $h_{12}$ have this property. First note that they are holomorphic, since $g_7$ has a zero on $\mathcal{H}_4$ (due to its odd weight) and at least a fifth-order zero on the diagonal (since its quasi-pullback is a cusp form and therefore divisible by $\Delta \otimes \Delta$, which has weight 12). They have (at least) double zeros along $\mathcal{H}_4$ and $\mathcal{H}_5$ due to the $b_8$ in their numerators. To prove that $\mathrm{ord}_{\mathcal{H}_1} h_{10} = 2$ and $\mathrm{ord}_{\mathcal{H}_1} h_{12} = 0$ we need to show that $g_7$ has order \emph{exactly} five on the diagonal. But if $\mathrm{ord}_{\mathcal{H}_1} g_7 \ge 7$, then the quotient $\frac{g_7}{b_5}$ would be a holomorphic paramodular form of weight two. Considering the possible weights of its quasi-pullbacks to $\mathcal{H}_1$ and $\mathcal{H}_4$ shows that $\mathrm{ord}_{\mathcal{H}_1} (g_7/b_5) \ge 10$ and $\mathrm{ord}_{\mathcal{H}_4}(g_7/b_5) \ge 4$, and in particular that $(g_7 / b_5)$ is again divisible by $b_5$. This is a contradiction as $g_7 / b_5^2$ has negative weight. \\

In particular, there are polynomials $P_1$ and $P_2$ such that $F$ and $h_{12} P_1(\mathcal{E}_4,\mathcal{E}_6,h_{12})$ have the same pullback to $\mathcal{H}_1$ and such that $F-h_{12}P_1(\mathcal{E}_4,\mathcal{E}_6,h_{12})$ and $h_{10}P_2(\mathcal{E}_4,\mathcal{E}_6,h_{12})$ have the same quasi-pullback. The quotient $$\frac{F - h_{12} P_1(\mathcal{E}_4,\mathcal{E}_6,h_{12}) - h_{10} P_2(\mathcal{E}_4,\mathcal{E}_6,h_{12})}{b_8}$$ is then holomorphic and of smaller weight, so the claim follows.
\end{proof}

\textbf{Antisymmetric even-weight paramodular forms.} We will compute the ring of even-weight paramodular forms by reducing against the antisymmetric Borcherds product $b_{12}$ which has weight $12$, trivial character, and divisor $\mathrm{div} \, b_{12} = 3 \mathcal{H}_5 + \mathcal{H}_{20}.$ Every antisymmetric paramodular form of even weight vanishes on the Humbert surface $\mathcal{H}_{20}$ by Lemma~\ref{level5orders} so we only need to consider quasi-pullbacks to the Humbert surface $\mathcal{H}_5$ to account for the rest of the divisor of $b_{12}$. \\

The quasi-pullback of any antisymmetric, even-weight paramodular form $F$ to $\mathcal{H}_5$ is a Hilbert modular form for $\mathbb{Q}(\sqrt{5})$ which is antisymmetric if its weight is even (or equivalently, if the order of $F$ on $\mathcal{H}_5$ is even) and symmetric if its weight is odd. From Gundlach's structure theorem one can infer that any such Hilbert modular form is a multiple of the form $s_{15}$ in odd weight, and $s_5 s_{15}$ in even weight. Organizing monomials by the power of $\mathbf{E}_2$ they contain shows that the space of such Hilbert modular forms is exactly $$s_{15} \mathbb{C}[\mathbf{E}_2^2,\mathbf{E}_6,s_5^2] \oplus \mathbf{E}_2 s_{15} \mathbb{C}[\mathbf{E}_2^2,\mathbf{E}_6,s_5^2].$$ The problem is then to produce paramodular forms of weights $14$ and $16$ with simple zeros along $\mathcal{H}_5$ whose quasi-pullbacks to $\mathcal{H}_5$ are $s_{15}$ and $\mathbf{E}_2s_{15}$. We get the following proposition.

\begin{lem}\label{evenlvl5} Let $h_{16} = \frac{b_5 g_7 b_{12}}{b_8}$. Every even-weight paramodular form is a polynomial expression in the generators $$\mathcal{E}_4,\mathcal{E}_6,g_6,g_8,b_4^2,b_5^2,g_{10},h_{10},b_5g_7,h_{12},b_{12},b_{14},h_{16}$$ of weights $4,6,6,8,8,10,10,12,12,12,14,16$.
\end{lem}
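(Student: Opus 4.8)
The plan is to reduce everything to the symmetric case already settled in Lemma~\ref{symmetriclvl5}. Given an even-weight paramodular form $F$, I decompose it into its $V_5$-eigenparts $F^{\pm} = \tfrac{1}{2}(F \pm F|V_5)$; both are paramodular of the same even weight, with $F^+$ being $V_5$-symmetric and $F^-$ being $V_5$-antisymmetric. By Lemma~\ref{symmetriclvl5} the symmetric part $F^+$ is a polynomial in the symmetric generators, so the whole problem is to express an antisymmetric even-weight form $F^-$ through the generators, now allowing $b_{12},b_{14},h_{16}$. The key structural point is that $b_{12}$ is itself antisymmetric, so once $F^-$ is made divisible by $b_{12}$ the quotient $F^-/b_{12}$ is \emph{symmetric} of weight $k-12$ and is handled by Lemma~\ref{symmetriclvl5}; in particular no separate induction on the weight is required beyond the symmetric case.

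The task thus reduces to modifying $F^-$, by subtracting known forms, until it is divisible by $b_{12}$, i.e.\ until it has order $\ge 3$ along $\mathcal{H}_5$ and order $\ge 1$ along $\mathcal{H}_{20}$. The $\mathcal{H}_{20}$ condition is automatic: by Lemma~\ref{level5orders}(iv) an antisymmetric even-weight form has odd, hence positive, order along $\mathcal{H}_{20}$. For $\mathcal{H}_5$ the order is likewise odd, so the only case needing attention is $\mathrm{ord}_{\mathcal{H}_5}F^- = 1$. Here I look at the quasi-pullback of $F^-$ to $\mathcal{H}_5$, which by Lemma~\ref{P5} is a Hilbert modular form for $\mathbb{Q}(\sqrt 5)$ of weight $k+1$; since this weight is odd, Gundlach's structure theorem forces it to be a multiple of $s_{15}$, hence to lie in the module
$$s_{15}\,\mathbb{C}[\mathbf{E}_2^2,\mathbf{E}_6,s_5^2] \;\oplus\; \mathbf{E}_2 s_{15}\,\mathbb{C}[\mathbf{E}_2^2,\mathbf{E}_6,s_5^2].$$
Because $\mathbb{C}[\mathbf{E}_2^2,\mathbf{E}_6,s_5^2]$ is contained in the $P_5$-image of the symmetric paramodular ring (via $P_5\mathcal{E}_4 = \mathbf{E}_2^2$, $P_5 b_5^2 = s_5^2$, and $P_5(\mathcal{E}_6 - \tfrac{544320}{34907}g_6) = \mathbf{E}_6$), and because $b_{14}$ and $h_{16}$ have simple zeros along $\mathcal{H}_5$ with quasi-pullbacks proportional to $s_{15}$ and $\mathbf{E}_2 s_{15}$, I can find symmetric forms $\tilde A,\tilde B$ (polynomials in the generators of Lemma~\ref{symmetriclvl5}, of weights $k-14$ and $k-16$) so that $b_{14}\tilde A + h_{16}\tilde B$ has the same quasi-pullback as $F^-$.

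Subtracting, $F^- - b_{14}\tilde A - h_{16}\tilde B$ vanishes to order $\ge 2$ along $\mathcal{H}_5$; but it is again antisymmetric of even weight, so its order is odd and therefore $\ge 3$. It still vanishes along $\mathcal{H}_{20}$, since $b_{14}$ does by its divisor and $h_{16}$ does through the factor $b_{12}$. Hence the difference is divisible by $b_{12}$, its quotient is a holomorphic symmetric even-weight form of weight $k-12$, and Lemma~\ref{symmetriclvl5} expresses that quotient, hence $F^-$, and finally $F = F^+ + F^-$, in the stated generators.

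The step I expect to carry the real weight is the claim that $b_{14}$ and $h_{16}$ genuinely realize the two module generators $s_{15}$ and $\mathbf{E}_2 s_{15}$ with nonvanishing quasi-pullbacks; equivalently, that each vanishes to order \emph{exactly} one along $\mathcal{H}_5$. For $h_{16}=b_5 g_7 b_{12}/b_8$ this first requires checking holomorphy and the precise orders along $\mathcal{H}_1,\mathcal{H}_4,\mathcal{H}_5,\mathcal{H}_{20}$ from the divisors in Table~1 together with the fact (established in the proof of Lemma~\ref{symmetriclvl5}) that $g_7$ has order exactly five along $\mathcal{H}_1$; the nonvanishing of the quasi-pullbacks then reduces to a one-dimensional computation in the spaces of symmetric Hilbert cusp forms of weights $15$ and $17$, which a short Fourier-coefficient check settles. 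The parity argument promoting ``order $\ge 2$'' to ``order $\ge 3$'' is what lets a single subtraction suffice, and it is the other point I would state with care.
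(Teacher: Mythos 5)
Your proposal is correct and follows essentially the same route as the paper: split off the $V_5$-symmetric part, use Lemma~\ref{level5orders} to reduce the antisymmetric case to matching the weight-$(k+1)$ quasi-pullback to $\mathcal{H}_5$ against $s_{15}$ and $\mathbf{E}_2 s_{15}$ via $b_{14}$ and $h_{16}$, and promote order $\ge 2$ to $\ge 3$ by parity before dividing by $b_{12}$. The only point you state slightly loosely is that ruling out the $s_5$-component of the quasi-pullback uses its \emph{symmetry} (not just its odd weight), which is exactly the fact recorded in the paper just before the lemma.
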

\begin{proof} Any paramodular form $F$ can be split into its symmetric and antisymmetric parts as $$F = \frac{F + F|V_5}{2} + \frac{F - F|V_5}{2}.$$ The symmetric part of $F$ is accounted for by the previous section. Therefore we assume without loss of generality that $F$ is antisymmetric under $V_5$. \\

Lemma~\ref{level5orders} shows that $F$ has odd order on $\mathcal{H}_5$. Suppose that order is one. Then the quasi-pullback of $F$ is a symmetric Hilbert modular form of odd weight and therefore takes the form $$s_{15} P_1(\mathbf{E}_2^2,\mathbf{E}_6,s_5^2,\mathbf{E}_2 s_5^2) + \mathbf{E}_2 s_{15} P_2(\mathbf{E}_2^2,\mathbf{E}_6,s_5^2,\mathbf{E}_2 s_5^2)$$ for some polynomials $P_1,P_2$. It is enough to produce any antisymmetric paramodular forms of weight $14$ and $16$ with only simple zeros along $\mathcal{H}_5$, since their quasi-pullbacks can only be $s_{15}$ and $\mathbf{E}_2 s_{15}$ up to a nonzero scalar multiple. For this we can take the holomorphic quotients $b_{14}$ and $h_{16}$ in the claim. Then $$F- b_{14} P_1(\mathcal{E}_4,\mathcal{E}_6 - Cg_6,b_5^2,b_5 g_7) - h_{16} P_2(\mathcal{E}_4,\mathcal{E}_6 - Cg_6,b_5^2, b_5g_7)$$ vanishes to order at least two (and therefore at least three) along $\mathcal{H}_5$. In particular it is divisible by $b_{12}$, with the quotient being a symmetric, even-weight paramodular form, so the claim follows from the symmetric case (Lemma~\ref{symmetriclvl5}).
\end{proof}

\textbf{Odd-weight paramodular forms.} We retain the notation from the previous subsections. The strategy to handle odd-weight paramodular forms will be reduction against the Borcherds product $b_5$. Recall that $\mathrm{div} \, b_5 = 7\mathcal{H}_1 + \mathcal{H}_4$. \\

\noindent \textbf{Theorem 1.} \emph{ Let $h_9 = \frac{g_6 b_8}{b_5}$, $h_{10} = \frac{g_7 b_8}{b_5}$, $h_{11} = \frac{g_6 g_7 b_8}{b_5^2}$, $h_{12} = \frac{g_7^2 b_8}{b_5^2}$, $h_{16} = \frac{b_5 g_7 b_{12}}{b_8}$. Every paramodular form of level 5 is an isobaric polynomial in the generators $$\mathcal{E}_4,b_5,\mathcal{E}_6,g_6,g_7,g_8,b_8,h_9,g_{10},h_{10},h_{11},b_{12},h_{12},b_{14},h_{16}.$$ The graded ring $M_*(K(5))$ is minimally presented by these generators and by $59$ relations in weights $13$ through $32$.}
\begin{proof} Let $F$ be a paramodular form of odd weight. Then the quasi-pullback of $F$ to $\mathcal{H}_1$ is a cusp form for $\mathrm{SL}_2(\mathbb{Z}) \otimes \mathrm{SL}_2(\mathbb{Z})$ and therefore some expression in $$(\Delta \otimes \Delta) \cdot \mathbb{C}[E_4 \otimes E_4,E_6 \otimes E_6, \Delta \otimes \Delta].$$ Since $\mathcal{E}_4,\mathcal{E}_6,h_{12}$ pullback to $E_4 \otimes E_4$, $E_6 \otimes E_6$ and $\Delta \otimes \Delta$, it is enough to produce paramodular forms in weights $11$, $9$ and $7$ whose quasi-pullbacks to $\mathcal{H}_1$ are $\Delta \otimes \Delta$. \\

We claim that $h_{11},h_9$ and $g_7$ have this property (at least up to a nonzero scalar multiple). Since the weight 12 cusp space for $\mathrm{SL}_2(\mathbb{Z}) \otimes \mathrm{SL}_2(\mathbb{Z})$ is one-dimensional it is enough to verify that $g_7,h_9,h_{11}$ vanish to order exactly $5,3,1$ respectively along $\mathcal{H}_1$. This was checked for $g_7$ in the proof of Lemma~\ref{symmetriclvl5}. To prove this for $h_9$ and $h_{11}$ it is enough to show that $g_6$ vanishes to order exactly six along $\mathcal{H}_1$. \\

Suppose $\mathrm{ord}_{\mathcal{H}_1} g_6 > 6$. Then the quasi-pullback of $g_6$ to $\mathcal{H}_1$ is a cusp form of weight greater than $12$. Since $\mathrm{SL}_2(\mathbb{Z}) \times \mathrm{SL}_2(\mathbb{Z})$ admits no cusp forms of weight $14$ other than zero, it follows that the quasi-pullback has weight at least $16$ and therefore $\mathrm{ord}_{\mathcal{H}_1} g_6 \ge 10$. In particular we find $$\mathrm{ord}_{\mathcal{H}_1} (g_6 b_8) \ge 14, \; \mathrm{ord}_{\mathcal{H}_4} (g_6 b_8) = 2,$$ so the quotient $g_6 b_8 / b_5^2$ is holomorphic of weight four. It vanishes on $\mathcal{H}_5$ and therefore its image under $P_4$ vanishes along the diagonal, so must be zero. This implies that $g_6 b_8 / b_5^2$ is a cusp form. Considering the possible weights of its quasi-pullback to $\mathcal{H}_1$ shows that $\mathrm{ord}(g_6 b_8 / b_5^2) \ge 8$. In particular the quotient $g_6 b_8 / b_5^3$ is holomorphic; but this is a contradiction, as it has negative weight $-1$. \\

It follows from this that there are polynomials $P_1,P_2,P_3$ such that $$F - h_{11} P_1(\mathcal{E}_4,\mathcal{E}_6,h_{12}) - h_9 P_2(\mathcal{E}_4,\mathcal{E}_6,h_{12}) - g_7 P_3(\mathcal{E}_4,\mathcal{E}_6,h_{12})$$ vanishes to order at least $7$ along the diagonal. It also vanishes along $\mathcal{H}_4$ because its weight is odd. Therefore, it is divisible by the Borcherds product $b_5$ with the quotient having even weight, so the claim follows from Lemma~\ref{evenlvl5}. \\

To complete the proof we need to compute the ideal of relations. For this it is helpful to know the dimensions $\mathrm{dim}\, M_k(K(5))$. This was worked out by Marschner (\cite{M}, Corollary 7.3.4) based on Ibukiyama's \cite{I} formula for $\mathrm{dim}\, S_k(p)$, $p$ prime, $k \ge 5$: \begin{align*} \mathrm{Hilb} \, M_*(K(5)) &= \sum_{k=0}^{\infty} \mathrm{dim}\, M_k(K(5)) t^k \\ &= \frac{P(t)}{(1 - t^4)(1 - t^5)(1 - t^6)(1 - t^{12})} \end{align*} with $P(t) = 1 + t^6 + t^7 + 2t^8 + t^9 + 2t^{10} + t^{11} + 2t^{12} + 2t^{14} + 2t^{16} + 2t^{18} + t^{19} + 2t^{20} + t^{21} + 2t^{22} + t^{23} + t^{24} + t^{30}$. \\ Since we are given spanning sets of paramodular forms, we only need to compute their Fourier expansions up to a precision sufficient to find pivot coefficients in all necessary weights, and find enough relations among them to cut the dimension down to the correct value. There are effective upper bounds on the necessary precision (for example \cite{BPY} for Fourier-Jacobi expansions of paramodular forms of degree two, or \cite{PY} in general). But one can guess the correct result: since the form $g_5^k$ of weight $5k$ can only be distinguished from zero by its first $k$ Fourier-Jacobi coefficients, we might expect to need $\lceil 32/5 \rceil = 7$ Fourier-Jacobi coefficients to determine all relations up to weight $32$. This turns out to be enough. Finally we checked that the ideal generated by relations of weight up to 32 yields the Hilbert series predicted by Ibukiyama's formula. \end{proof}

It was conjectured in \cite{M} that $M_*(K(5))$ is Cohen-Macaulay. This follows from the computation above.

\begin{cor} The graded ring $M_*(K(5))$ is a Gorenstein ring which is not a complete intersection.
\end{cor}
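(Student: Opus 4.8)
The plan is to deduce both assertions from the explicit presentation obtained in Theorem 1 together with the Hilbert series $\mathrm{Hilb}\,M_*(K(5)) = P(t)/[(1-t^4)(1-t^5)(1-t^6)(1-t^{12})]$, using Stanley's criteria for Gorenstein graded rings \cite{S}. Note that $M_*(K(5))$ is an integral domain of Krull dimension $4$, since $\mathrm{Proj}\,M_*(K(5)) = X_{K(5)}$ is three-dimensional; this matches the four factors in the denominator.

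The first and main step is to establish that $M_*(K(5))$ is Cohen-Macaulay, settling Marschner's conjecture. I would exhibit a homogeneous system of parameters of weights $4,5,6,12$ --- the natural candidates being $\mathcal{E}_4, b_5, g_6, b_{12}$ --- and check that these have no common zero on $X_{K(5)}$, so that the quotient $Q = M_*(K(5))/(\mathcal{E}_4,b_5,g_6,b_{12})$ is Artinian. By graded Nakayama, the minimal number of module generators of $M_*(K(5))$ over the polynomial subring $\mathbb{C}[\mathcal{E}_4,b_5,g_6,b_{12}]$ equals $\dim_{\mathbb{C}} Q$, which is always at least the rank $P(1) = 26$; freeness over this subring, equivalently the Cohen-Macaulay property, holds precisely when equality is attained (see \cite{S}, \cite{BG}). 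Using the generators and the $59$ relations from Theorem 1, this is a finite computation in Macaulay2: one verifies that the chosen parameters form a regular sequence, i.e. that $Q$ has Hilbert series exactly $P(t)$ and total dimension $26$.

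Granting Cohen-Macaulayness, the Gorenstein property follows from the shape of the numerator. A direct inspection shows that $P(t)$ is palindromic of degree $30$, with the coefficient of $t^k$ equal to that of $t^{30-k}$ for every $k$ (the nonzero coefficients $1,1,1,2,1,2,1,2$ in degrees $0,6,7,8,9,10,11,12$ are mirrored in degrees $30,24,23,22,21,20,19,18$, the coefficients $2,2,2$ in degrees $14,16,18$ are symmetric about $t^{15}$, and all omitted degrees vanish on both sides). By Stanley's theorem \cite{S}, a Cohen-Macaulay graded domain whose Hilbert series has symmetric numerator is Gorenstein, so $M_*(K(5))$ is Gorenstein.

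Finally, to see that $M_*(K(5))$ is not a complete intersection I would compare the minimal number of relations with the codimension. The ring is minimally generated by $15$ elements and has Krull dimension $4$, so its codimension in the corresponding weighted polynomial ring is $15 - 4 = 11$. A graded complete intersection has defining ideal minimally generated by exactly as many elements as its codimension, whereas Theorem 1 gives a minimal presentation with $59$ relations; since $59 > 11$, the ring cannot be a complete intersection. The main obstacle in the whole argument is the verification of Cohen-Macaulayness: the remaining steps are formal once the Hilbert series and the minimal presentation are in hand, but proving that the parameters of weights $4,5,6,12$ form a regular sequence requires either the explicit Macaulay2 dimension count above or a geometric depth argument for $X_{K(5)}$.
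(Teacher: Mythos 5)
Your proof is correct and follows essentially the same route as the paper: exhibit a homogeneous system of parameters in weights $4,5,6,12$, verify in Macaulay2 that $M_*(K(5))$ is Cohen--Macaulay (equivalently, free over the parameter subring, detected by $\dim_{\mathbb{C}} Q = P(1)$), and then read off the Gorenstein property from the palindromic numerator $P(t)$ via Stanley. The only departures are cosmetic: the paper takes $\mathcal{E}_6$ rather than $g_6$ as the weight-$6$ parameter, and it excludes the complete intersection property by noting that $P(t)$ does not factor into cyclotomic polynomials, whereas you compare the $59$ minimal relations with the codimension $15-4=11$; both criteria are standard and equally valid here.
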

\begin{proof} In principle, one can test algorithmically whether any graded ring given by explicit generators and relations is Cohen-Macaulay (for example, using Algorithm 5.2 of \cite{SW}). However, with so many generators and relations it is far easier to guess a sequence of four modular forms (here $\mathcal{E}_4,b_5,\mathcal{E}_6,b_{12}$) and verify that this is a homogeneous system of parameters and a $M_*(K(5))$-sequence (which can be done quickly in Macaulay2). For these notions and their relation to the Cohen-Macaulay property we refer to (for example) chapter 6 of \cite{BG}, especially Proposition 6.7. \\

By Stanley (\cite{S}, Corollary 3.3 and Theorem 4.4) the claim can be read off of the Hilbert polynomial $$P(t) = (1 - t^4)(1-t^5)(1-t^6)(1-t^{12}) \cdot \mathrm{Hilb} \, M_*(K(5)).$$ $M_*(K(5))$ is Gorenstein because $P(t)$ is palindromic and it is not a complete intersection because $P(t)$ does not factor into cyclotomic polynomials.
\end{proof}

(The published version of this note contains some incorrect remarks regarding the field of paramodular functions and the rationality of $X_{K(5)}$. Whether $X_{K(5)}$ is a rational variety seems to be open. I thank G. Sankaran for pointing this out to me.)

\newpage

\section{Paramodular forms of level 7}

In this section we compute generators for $M_*(K(7))$ in terms of the Borcherds products and Gritsenko lifts in tables 3 and 4 below. The procedure is roughly the same as what we used for level $5$.

\begin{longtable}{|l|l|l|l|}
\caption{Some Borcherds products of level 7} \\
\hline
   Name  &    Weight       &  Symmetric &  Divisor                                                       \\ \hline
       $b_4$ & $4$ & yes & $8\mathcal{H}_1 + 2 \mathcal{H}_4$ \\ \hline
       $b_6$ & $6$  & yes & $6\mathcal{H}_1 + \mathcal{H}_9$ \\ \hline
       $b_7$ & $7$  & no & $5\mathcal{H}_1 + 3 \mathcal{H}_4 + \mathcal{H}_8$ \\ \hline
       $b_9$ & $9$  & no & $3 \mathcal{H}_1 + \mathcal{H}_4 + \mathcal{H}_8 + \mathcal{H}_9$ \\ \hline
       $b_{10}$ & $10$ &  yes & $2\mathcal{H}_1 + 4\mathcal{H}_4 + \mathcal{H}_{21}$ \\ \hline
       $b_{12}^{sym}$ & $12$ & yes & $2\mathcal{H}_4 + \mathcal{H}_9 + \mathcal{H}_{21}$ \\ \hline
       $b_{12}^{anti}$ & $12$ & no & $\mathcal{H}_8 + \mathcal{H}_{28}$ \\ \hline
       $b_{13}$ & $13$ & yes & $11\mathcal{H}_1 + \mathcal{H}_4 + \mathcal{H}_{28}$ \\ \hline
\end{longtable}

Tables of Borcherds products (including those with character) appear in section 7.3 of \cite{G} and appendix A of \cite{W}. In the notation of \cite{W} these are the forms $b_4 = \psi_2^2$, $b_6 = \psi_6$, $b_7 = \psi_2 \psi_5$, $b_9 = \frac{b_6b_7}{b_4} = \psi_2^{-1} \psi_5 \psi_6$, $b_{10} = \psi_{10}^{(1)}$, $b_{12}^{sym} = \frac{b_6 b_{10}}{b_4} = \psi_2^{-2} \psi_6 \psi_{10}^{(1)}$, $b_{12}^{anti} = \psi_2^{-2} \psi_5 \psi_{11}$ and $b_{13} = \frac{b_4^2 b_{12}^{anti}}{b_7} = \psi_2 \psi_{11}$.

We will also need the following Gritsenko lifts.

\begin{longtable}{|l|l|l|}
\caption{Some Gritsenko lifts of level 7} \\
\hline
   Name  &    Weight &    Input Jacobi form                                                      \\ \hline
	$g_5$ & $5$ & $\begin{aligned} &\quad \Big( (\zeta^{-5} - \zeta^5) - 2(\zeta^{-4} - \zeta^4) - 9(\zeta^{-3} - \zeta^3) + 36(\zeta^{-2} - \zeta^2) - 42(\zeta^{-1} - \zeta)\Big)q \\ &+ \Big( -18(\zeta^{-6} - \zeta^6) + 72(\zeta^{-5} - \zeta^5) - 72(\zeta^{-4} - \zeta^4) - 72(\zeta^{-3} - \zeta^3) \\ &\quad\quad + 198(\zeta^{-2} - \zeta^2) - 144(\zeta^{-1} - \zeta) \Big)q^2 + O(q^3) \end{aligned}$ \\ \hline
       $g_6$ & $6$ & $\begin{aligned} &\quad \Big( (\zeta^{-5} + \zeta^5) + 6 (\zeta^{-4} + \zeta^4) - 35(\zeta^{-3} + \zeta^3) + 40(\zeta^{-2} + \zeta) + 34(\zeta^{-1} + \zeta) - 92 \Big) q \\ &+ \Big( -2(\zeta^{-7} + \zeta^7) - 6(\zeta^{-6} + \zeta^6) -70(\zeta^{-5} + \zeta^5) + 388(\zeta^{-4} + \zeta^4) - 546(\zeta^{-3} + \zeta^3) \\ &\quad\quad + 38(\zeta^{-2} + \zeta^2) + 618(\zeta^{-1} + \zeta) -840 \Big)q^2 + O(q^3) \end{aligned}$ \\ \hline
	$g_7$ & $7$ & $\begin{aligned} &\quad \Big( (\zeta^{-5} - \zeta^5) + 11(\zeta^{-3} - \zeta^3) - 64(\zeta^{-2} - \zeta^2) + 90(\zeta^{-1} - \zeta)\Big)q \\ &+ \Big( 24(\zeta^{-6} - \zeta^6) - 232(\zeta^{-5} - \zeta^5) + 352(\zeta^{-4} - \zeta^4) + 648(\zeta^{-3} - \zeta^3) \\ &\quad\quad  -2312(\zeta^{-2} - \zeta^2) + 2288(\zeta^{-1} - \zeta) \Big) q^2 + O(q^3) \end{aligned}$ \\ \hline
       $g_8$ & $8$ & $\begin{aligned} &\quad \Big( -5(\zeta^{-3} + \zeta^3) + 18(\zeta^{-2} + \zeta^2) - 27(\zeta^{-1} + \zeta) + 28 \Big) q \\ &+  \Big( (\zeta^{-7} + \zeta^7) + 6(\zeta^{-6} + \zeta^6) - 49(\zeta^{-5} + \zeta^5) + 166(\zeta^{-4} + \zeta^4) - 291(\zeta^{-3} + \zeta^3) \\ &\quad \quad + 338(\zeta^{-2} + \zeta^2) -429(\zeta^{-1} + \zeta) + 512 \Big)q^2 + O(q^3) \end{aligned}$ \\ \hline
       $g_{10}$ & $10$ & $\begin{aligned} &\quad \Big( (\zeta^{-2} + \zeta^2) - 2(\zeta^{-1} + \zeta) + 2 \Big)q \\ &+ \Big( -(\zeta^{-6} + \zeta^6) + 2(\zeta^{-5} + \zeta^5) - (\zeta^{-4} + \zeta^4) - 2(\zeta^{-3} + \zeta^3) \\ &\quad\quad - 15(\zeta^{-2} + \zeta^2) + 32(\zeta^{-1} + \zeta) - 30\Big)q^2 + O(q^3) \end{aligned}$ \\ \hline
\end{longtable}

We will need the structure of Hilbert modular forms for $K = \mathbb{Q}(\sqrt{2})$. This is a classical result (see Hammond, \cite{H}) and using the theory of Borcherds products the proof is quite short, so we recall the main ideas here. The symmetric, even-weight Hilbert modular forms are a polynomial ring $\mathbb{C}[\mathbf{E}_2,s_4,\mathbf{E}_6]$, where $\mathbf{E}_2,\mathbf{E}_6$ are Eisenstein series and where $s_4$ is a product of eight theta-constants whose divisor consists of a double zero along the diagonal (which can also be constructed as a Borcherds product). To get the full ring one observes that any antisymmetric, odd-weight Hilbert modular form has forced zeros on the rational quadratic divisors of discriminants $1$ and $4$; that any symmetric, odd-weight Hilbert modular form has forced zeros on the rational quadratic divisors of discriminants $2$ and $8$; and that any antisymmetric, even-weight Hilbert modular form has forced zeros on the rational quadratic divisors of discriminants $1,2,4,8$. Moreover one can construct an antisymmetric Borcherds product $s_5$ of weight $5$ and a symmetric Borcherds product $s_9$ of weight $9$, each of which has only simple zeros on the respective quadratic divisors. Thus by reducing against $s_5$, $s_9$ and their product we get the Hironaka decomposition $$M_*(\mathrm{SL}_2(\mathbb{Z}[\sqrt{2}])) = \mathbb{C}[\mathbf{E}_2,s_4,\mathbf{E}_6] \oplus s_5\mathbb{C}[\mathbf{E}_2,s_4,\mathbf{E}_6] \oplus s_9 \mathbb{C}[\mathbf{E}_2,s_4,\mathbf{E}_6] \oplus s_5 s_9 \mathbb{C}[\mathbf{E}_2,s_4,\mathbf{E}_6]$$ into symmetric and antisymmetric forms of even and odd weights. \\

\textbf{Symmetric even-weight paramodular forms.} The graded ring of symmetric even-weight paramodular forms of level $7$ will be computed by nearly the same argument that we used for level $5$. Here we reduce instead against the Borcherds product $b_4$ of weight $4$ with divisor $\mathrm{div} \, b_4 = 8 \mathcal{H}_1 + \mathcal{H}_4$.

\begin{lem} Let $F$ be a symmetric, even-weight paramodular form. Then there is a polynomial $P$ such that $F - P(\mathcal{E}_4,\mathcal{E}_6,b_6,g_8,g_{10})$ vanishes to order at least two along $\mathcal{H}_4$.
\end{lem}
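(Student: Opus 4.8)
The plan is to run the argument of the corresponding level-$5$ lemma almost verbatim, the point again being that $P_4$ carries $V_7$-symmetric forms into the ring $A_*^{sym}$. Indeed, the Proposition preceding the definition of $A_*$ shows that for any $f = P_4 F$ the functions $\Phi_1 f(\tau/2)$ and $\Phi_2 f(\tau/2)$ have level one, so $P_4 F \in A_*$; and the lemma describing $P_4$ under $V_7$ gives $P_4(F|V_7)(\tau_1,\tau_2) = P_4 F(\tau_2,\tau_1)$, so a $V_7$-symmetric $F$ pulls back into $A_*^{sym}$. The heart of the matter is therefore to show that the five pullbacks $P_4\mathcal{E}_4,\, P_4\mathcal{E}_6,\, P_4 b_6,\, P_4 g_8,\, P_4 g_{10}$ generate $A_*^{sym}$. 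Granting this, $P_4 F \in A_*^{sym}$ can be written as $P(P_4\mathcal{E}_4,\dots,P_4 g_{10}) = P_4\, P(\mathcal{E}_4,\dots,g_{10})$ for some polynomial $P$, and then $F - P$ satisfies $P_4(F-P)=0$.

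To compute the pullbacks I would proceed exactly as at level $5$. For the Gritsenko lifts $\mathcal{E}_4,\mathcal{E}_6,g_8,g_{10}$ the Maass relations reduce $P_4$ to an explicit expression in the first few Fourier--Jacobi coefficients of the input; as at level $5$ this yields $P_4\mathcal{E}_4 = X_2^2 - 48 X_4$ (forced, since $A_4^{sym}$ is one-dimensional and the $\Phi$-image is $E_4(2\tau)$) and $P_4\mathcal{E}_6 = X_2^3 - 72 X_2 X_4 + c'\Delta_6$ with a level-dependent constant $c'$, while $g_8,g_{10}$ are cusp forms (their input Jacobi forms have no constant term), so their pullbacks are cusp forms, hence — modulo the products $(P_4\mathcal{E}_4)^2$ and $(P_4\mathcal{E}_4)(P_4 b_6)$ already available — nonzero multiples of the generators $X_2\Delta_6$ and $X_4\Delta_6$. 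The one genuinely new point is $b_6$: being a Borcherds product it admits no Maass shortcut, so its pullback must be read off its Fourier--Jacobi expansion directly. Here the divisor $\mathrm{div}\, b_6 = 6\mathcal{H}_1 + \mathcal{H}_9$ does not meet $\mathcal{H}_4$, so $P_4 b_6 \neq 0$; and since this divisor is supported on Humbert surfaces of square discriminant, $b_6$ is a cusp form, forcing $P_4 b_6$ to be a nonzero multiple of the unique weight-six cusp form $\Delta_6$ in $A_*^{sym}$.

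Comparing the weights $4,6,6,8,10$ with the generators $X_2^2-48X_4,\, X_2^3-72X_2X_4,\, \Delta_6,\, X_2\Delta_6,\, X_4\Delta_6$ of $A_*^{sym}$ supplied by Proposition~\ref{genA}, the change of basis is triangular in each graded piece: $b_6$ supplies $\Delta_6$, then $\mathcal{E}_6$ supplies $X_2^3-72X_2X_4$, and $g_8,g_{10}$ supply the remaining cusp-form generators against the products built from $\mathcal{E}_4$ and $b_6$. Thus the five pullbacks generate $A_*^{sym}$. The only real obstacle is confirming that this transition is genuinely invertible — equivalently, that the relevant pullback coefficients (the $\Delta_6$-coefficient of $P_4 b_6$, the $X_4\Delta_6$-coefficient of $P_4 g_{10}$, and so on) are nonzero — which is the finite computation I would carry out weight by weight up to weight $10$.

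Finally, with $P$ as above, $F - P(\mathcal{E}_4,\mathcal{E}_6,b_6,g_8,g_{10})$ is $V_7$-symmetric and its $P_4$-image vanishes. Since $7$ is prime and $\mathcal{H}_4$ is the $K(7)^+$-orbit of the image of $P_4$, this form vanishes identically on $\mathcal{H}_4$. Its order of vanishing there is even, for otherwise the quasi-pullback to $\mathcal{H}_4$ would be an odd-weight modular form for $G$, of which there are none; this is exactly the argument of Lemma~\ref{level5orders}(i), which applies verbatim at level $7$. Hence the zero along $\mathcal{H}_4$ is at least double, as claimed.
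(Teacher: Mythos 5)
Your proposal is correct and follows essentially the same route as the paper: pull back to $A_*^{sym}$, show the five pullbacks generate it via Proposition~\ref{genA}, subtract, and use the parity of the vanishing order along $\mathcal{H}_4$ to upgrade a simple zero to a double one. The only cosmetic difference is that where the paper identifies $P_4 b_6 = -\Delta_6$, $P_4 g_8 = X_2\Delta_6$, $P_4 g_{10} = -2X_4\Delta_6$ by explicit Fourier expansion, you pin them down by softer constraints (divisors, cusp conditions, dimension counts) and defer the remaining nonvanishing checks to a finite coefficient computation, which is exactly the computation the paper carries out.
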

\begin{proof} If $F( \begin{psmallmatrix} \tau & z \\ z & w \end{psmallmatrix}) = \sum_{a,b,c} \alpha(a,b,c) q^a r^b s^{7c}$ is a paramodular form of level $7$ then its image under $P_4$ begins \begin{align*} P_4 F(\tau_1,\tau_2) &= F \left( \begin{psmallmatrix} 2\tau_1 & \tau_1 \\ \tau_1 & \tau_1/2 + \tau_2/14 \end{psmallmatrix} \right) \\ &= \Big( \alpha(0,0,0) + \alpha(1,0,0) q_1^2 + O(q_1^4) \Big)  + \Big( (\alpha(1,-5,1) + \alpha(2,-7,1) + \alpha(3,-9,1) ) q_1^{1/2} \\ &\quad\quad\quad + ((\alpha(1,-4,1) + \alpha(2,-6,1) + \alpha(3,-8,1) + \alpha(4,-10,1)) q_1^{3/2} + O(q_1^{5/2}) \Big) q_2^{1/2} + O(q_2) \end{align*} If $F$ is the Gritsenko lift of a Jacobi form $\phi = \sum_{n,r} c(n,r) q^n \zeta^r \in J_{k,7}$ then this reduces to \begin{align*} P_4 F(\tau_1,\tau_2) &= -\frac{B_k}{2k} c(0,0) \Big( 1 - \frac{2k}{B_k} q_1^2 + O(q_1^4) \Big) \\ &\quad + (-1)^k \Big( (2c(1,5) + c(2,7)) q_1^{1/2} + 2 (c(1,4) + c(2,6)) q_1^{3/2} + O(q_1^{5/2}) \Big) q_2^{1/2} + O(q_2). \end{align*} In any case, pulling back the Eisenstein series and the forms $b_6,g_8,g_{10}$ yields \begin{align*} P_4 \mathcal{E}_4 &= (1 + 240q_1^2 + ...) + (480q_1^{1/2} + 13440q_1^{3/2} + ...)q_2^{1/2} + O(q_2) = X_2^2 - 48X_4; \\ P_4 \mathcal{E}_6 &= (1 - 504q_1^2 - ...) + (\frac{25200}{191} q_1^{1/2} + \frac{5858496}{191} q_1^{3/2} + ...)q_2^{1/2} + O(q_2) = X_2^3 - 72X_2 X_4 - \frac{112320}{191} \Delta_6; \\ P_4 b_6 &= (-q_1^{1/2}+ 12q_1^{3/2} + ...)q_2^{1/2} + O(q_2) = -\Delta_6; \\ P_4 g_8 &= (q_1^{1/2} + 12q_1^{3/2} + ...) q_2^{1/2} + O(q_2) = X_2 \Delta_6; \\ P_4 g_{10} &= (-2q_1^{3/2} + ...)q_2^{1/2} + O(q_2) = -2X_4 \Delta_6. \end{align*} It is clear that these forms generate the ring $A_*^{sym}$ of possible pullbacks by Proposition~\ref{genA}.
\end{proof}

\begin{lem}\label{symmetriclvl7} Let $h_8 = \frac{g_6^2}{b_4}$. The graded ring of symmetric even-weight paramodular forms is generated by $$\mathcal{E}_4,b_4,\mathcal{E}_6,b_6,g_6,g_8,h_8,b_{10},g_{10},b_{12}^{sym}.$$
\end{lem}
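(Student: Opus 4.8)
The plan is to induct on the weight, reducing against the Borcherds product $b_4$, whose divisor is $\mathrm{div}\, b_4 = 8\mathcal{H}_1 + 2\mathcal{H}_4$. Thus, given a symmetric even-weight form $F$, the aim is to subtract a polynomial in the listed generators so that the difference vanishes to order at least $8$ along the diagonal $\mathcal{H}_1$ and to order at least $2$ along $\mathcal{H}_4$; the quotient by $b_4$ is then holomorphic, symmetric, of even weight and of smaller weight than $F$, so the inductive hypothesis applies. The weights match up correctly because for each of the four forms used below, the weight plus its order along $\mathcal{H}_1$ equals $12$.

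First I would invoke the previous lemma to subtract a polynomial in $\mathcal{E}_4,\mathcal{E}_6,b_6,g_8,g_{10}$, after which $F$ vanishes to order at least two along $\mathcal{H}_4$. Vanishing on $\mathcal{H}_4$ means $P_4 F = 0$, and since $\Phi F(\tau) = \Phi_2 P_4 F(\tau/2)$ this forces $\Phi F = 0$; hence $F$ is a cusp form and $P_1 F$ is a cusp form for $\mathrm{SL}_2(\mathbb{Z}) \times \mathrm{SL}_2(\mathbb{Z})$, that is, an element of $(\Delta \otimes \Delta)\,\mathbb{C}[E_4 \otimes E_4, E_6 \otimes E_6, \Delta \otimes \Delta]$. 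This cuspidality is exactly what lets the subsequent diagonal reduction use only forms that also vanish on $\mathcal{H}_4$.

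The heart of the argument is a four-step reduction along the diagonal built from the forms $b_{12}^{sym}, b_{10}, h_8, g_6$, which I claim vanish on $\mathcal{H}_1$ to orders exactly $0,2,4,6$ and on $\mathcal{H}_4$ to order at least $2$. Since each has weight-plus-order-on-$\mathcal{H}_1$ equal to $12$, the quasi-pullback of each to $\mathcal{H}_1$ is a nonzero weight-$12$ cusp form, hence a nonzero multiple of $\Delta \otimes \Delta$ (the space $S_{12}(\mathrm{SL}_2(\mathbb{Z}) \times \mathrm{SL}_2(\mathbb{Z})) = \mathbb{C}(\Delta \otimes \Delta)$ being one-dimensional). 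Granting this, I climb from $j=0$ to $j=3$: if $F$ vanishes to order at least $2j$ on $\mathcal{H}_1$, its order-$2j$ quasi-pullback lies in $(\Delta \otimes \Delta)\, M_{k-12+2j}^{sym}(\mathrm{SL}_2(\mathbb{Z}) \times \mathrm{SL}_2(\mathbb{Z}))$, and I match it by subtracting the $j$-th of these four forms times a polynomial in $\mathcal{E}_4,\mathcal{E}_6,b_{12}^{sym}$; this is possible because $P_1\mathcal{E}_4 = E_4 \otimes E_4$, $P_1 \mathcal{E}_6 = E_6 \otimes E_6$ and $P_1 b_{12}^{sym} \in \mathbb{C}^{\times}(\Delta \otimes \Delta)$ generate the diagonal ring. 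Each subtraction raises the order on $\mathcal{H}_1$ by two and, because all four forms vanish to order at least two on $\mathcal{H}_4$, preserves the double zero there. After the four steps $F$ is divisible by $b_4$ and the induction closes.

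The main obstacle is pinning down the exact orders, above all $\mathrm{ord}_{\mathcal{H}_1} g_6 = 6$ (which also yields the holomorphy of $h_8 = g_6^2/b_4$ and $\mathrm{ord}_{\mathcal{H}_1} h_8 = 4$). The bound $\mathrm{ord}_{\mathcal{H}_1} g_6 \geq 6$ and the exclusion of order $8$ are automatic from $S_{10}(\mathrm{SL}_2(\mathbb{Z})) = S_{14}(\mathrm{SL}_2(\mathbb{Z})) = 0$, since the quasi-pullback would otherwise be a nonzero cusp form of weight $10$ or $14$. To exclude order $\geq 10$ I would first prove $M_2(K(7)) = 0$ by the weight-two argument used in level $5$ (splitting into $V_7$-eigenforms, forcing zeros of order $\geq 10$ on $\mathcal{H}_1$ and $\geq 4$ on $\mathcal{H}_4$, and dividing by $b_4$ to reach negative weight), and then note that $\mathrm{ord}_{\mathcal{H}_1} g_6 \geq 10$ would make $g_6/b_4$ a nonzero holomorphic form of weight $2$. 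That $g_6$ (hence $h_8$) vanishes on $\mathcal{H}_4$ is checked from the Fourier coefficients of its input Jacobi form through the explicit formula for $P_4$ of a Gritsenko lift, while the orders of $b_{10}$ and $b_{12}^{sym}$ on $\mathcal{H}_1$ are exact because these are Borcherds products with prescribed divisors.
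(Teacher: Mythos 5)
Your proposal is correct and follows essentially the same route as the paper: reduce against $b_4$ after first arranging a double zero on $\mathcal{H}_4$ via the preceding lemma, then match diagonal quasi-pullbacks in the four cases of order $0,2,4,6$ using $b_{12}^{sym},b_{10},h_8,g_6$, with the key technical point being $\mathrm{ord}_{\mathcal{H}_1}g_6=6$. The only (immaterial) difference is bookkeeping in that last step: the paper assumes $\mathrm{ord}_{\mathcal{H}_1}g_6\ge 8$ and derives the weight $-2$ contradiction directly from $g_6/b_4$, whereas you rule out order $8$ via $S_{14}=0$ and order $\ge 10$ via a separately stated $M_2(K(7))=0$ --- the same underlying argument.
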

\begin{proof} Let $F$ be a symmetric even-weight paramodular form. By the previous lemma we assume without loss of generality that $F$ has at least a double zero along $\mathcal{H}_4$. In particular $F$ is a cusp form. To divide by $b_4$ we need to handle the possible cases that $F$ has order $0,2,4$ or $6$ along the diagonal. The argument is analogous to what we used for level $5$: since $$M_*^{sym}(\mathrm{SL}_2(\mathbb{Z}) \times \mathrm{SL}_2(\mathbb{Z})) = \mathbb{C}[E_4 \otimes E_4, E_6 \otimes E_6, \Delta \otimes \Delta]$$ and since $\mathcal{E}_4$ and $\mathcal{E}_6$ pullback to $E_4 \otimes E_4$ and $E_6 \otimes E_6$ along the diagonal, the task that remains is to find paramodular forms of weights $12,10,8,6$, each with at least a double zero along $\mathcal{H}_4$ and whose quasi-pullbacks to the diagonal are $\Delta \otimes \Delta$. \\

Since $\Delta \otimes \Delta$ is (up to scalar multiples) the unique cusp form of weight $12$, it is enough to produce any paramodular forms with the correct orders along $\mathcal{H}_1$ and $\mathcal{H}_4$. We claim that $b_{12}^{sym},b_{10},h_8,g_6$ have this property. The divisors of the Borcherds products $b_{12}^{sym}$ and $b_{10}$ can be read off of table 3 but we need to consider $h_8$ and $g_6$ more carefully. \\

First note that $g_6$ has a double zero along $\mathcal{H}_4$: fom the Fourier expansion of $P_4 F$ worked out in the previous lemma we see that $P_4 g_6(\tau_1,\tau_2) = 0 + O(q_2)$ and therefore $P_4 g_6 = 0$. Since $g_6$ is a cusp form, its pullback to $\mathcal{H}_1$ has weight at least 12, and therefore $\mathrm{ord}_{\mathcal{H}_1} g_6 \ge 6$. In particular, we find $\mathrm{ord}_{\mathcal{H}_1} h_8 \ge 4$ and $\mathrm{ord}_{\mathcal{H}_4} h_8 \ge 2$; and $\mathrm{ord}_{\mathcal{H}_1} h_8 = 4$ will follow from $\mathrm{ord}_{\mathcal{H}_1} g_6 = 6.$ \\

To prove this, suppose $g_6$ vanishes to order $\ge 8$; then $g_6 / b_4$ is holomorphic of weight two. Since $M_*(\mathrm{SL}_2(\mathbb{Z}) \times \mathrm{SL}_2(\mathbb{Z}))$ and $A_*$ do not admit modular forms of weight two, it follows that $g_6 / b_4$ vanishes along both $\mathcal{H}_1$ and $\mathcal{H}_4$. But considering the possible orders of its quasi-pullbacks shows that $\mathrm{ord}_{\mathcal{H}_1}(g_6 / b_4) \ge 10$ and $\mathrm{ord}_{\mathcal{H}_4}(g_6 / b_4) \ge 2$, so $(g_6 / b_4) / b_4$ is holomorphic of negative weight $(-2)$. This is a contradiction.
\end{proof}

\textbf{Antisymmetric even-weight paramodular forms.} We deal with antisymmetric forms by an argument similar to the level 5 case; namely, reduction against the Borcherds product $b_{12}^{anti}$. Recall $\mathrm{div} \, b_{12}^{anti} = \mathcal{H}_8 + \mathcal{H}_{28}$. Every antisymmetric, even-weight paramodular form of level $7$ has a forced zero on $\mathcal{H}_{28}$, so to reduce against $b_{12}^{anti}$ we only need to consider the possible pullbacks to $\mathcal{H}_8$. The result of such a pullback will be a Hilbert modular form for the field $K = \mathbb{Q}(\sqrt{2})$ of discriminant $8$:

\begin{lem} Fix the totally positive element $\lambda = 1 - \frac{1}{\sqrt{8}} \in \mathcal{O}_K^{\#}$ with conjugate $\lambda' = 1 + \frac{1}{\sqrt{8}}$. Every element of the Humbert surface $\mathcal{H}_8$ is equivalent under $K(7)^+$ to a matrix of the form $$\begin{psmallmatrix} \lambda \tau_1 + \lambda' \tau_2 & (\tau_1 + \tau_2)/2 \\ (\tau_1 + \tau_2)/2 & (2/7)\lambda' \tau_1 +(2/7)\lambda \tau_2 \end{psmallmatrix}, \; \; \tau_1,\tau_2 \in \mathbb{H}.$$ If $F$ is a paramodular form of weight $k$ then $$P_8 F(\tau_1,\tau_2) = F \Big( \begin{psmallmatrix} \lambda \tau_1 + \lambda' \tau_2 & (\tau_1 + \tau_2)/2 \\ (\tau_1 + \tau_2)/2 & (2/7)\lambda' \tau_1 +(2/7)\lambda \tau_2 \end{psmallmatrix} \Big)$$ is a Hilbert modular form of weight $k$ for $\mathrm{SL}_2(\mathbb{Z}[\sqrt{2}])$. The pullback $P_8$ preserves cusp forms and sends $V_7$-(anti)symmetric paramodular forms to (anti)symmetric Hilbert modular forms.
\end{lem}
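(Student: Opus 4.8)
The plan is to follow the template of Lemma~\ref{P5}, since $\mathcal{H}_8$ for level $7$ plays exactly the role that $\mathcal{H}_5$ plays for level $5$: it is the Humbert surface whose discriminant equals that of a real-quadratic field, here $K=\mathbb{Q}(\sqrt2)$. First I would verify that the parameterized matrices lie on $\mathcal{H}_8$. Writing $\tau=\lambda\tau_1+\lambda'\tau_2$, $z=(\tau_1+\tau_2)/2$, $w=(2/7)(\lambda'\tau_1+\lambda\tau_2)$ and using $\lambda+\lambda'=2$, $\lambda\lambda'=7/8$, one checks that they satisfy the single linear relation $2\tau-8z+7w=0$; this is a primitive singular relation of discriminant $(-8)^2-4\cdot2\cdot7=8$, so the image lies in $\mathcal{H}_8$. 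Since $7$ is prime, $\mathcal{H}_8$ is irreducible, hence is a single $K(7)^+$-orbit of the representative plane $\{2\tau-8z+7w=0\}$. It then remains to see that $(\tau_1,\tau_2)\mapsto\begin{psmallmatrix}\tau&z\\z&w\end{psmallmatrix}$ is a biholomorphism from $\mathbb{H}\times\mathbb{H}$ onto the part of that plane lying in $\mathbb{H}_2$: the linear map is invertible because $\lambda\ne\lambda'$, and a short computation of the imaginary-part matrix $Y$ gives $\det Y=\tfrac17\,\mathrm{Im}(\tau_1)\,\mathrm{Im}(\tau_2)$, which together with $\mathrm{Im}(\tau)=\lambda\,\mathrm{Im}(\tau_1)+\lambda'\,\mathrm{Im}(\tau_2)$ matches positive-definiteness of $Y$ with $\tau_1,\tau_2\in\mathbb{H}$. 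This gives the first assertion.

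To show $P_8F$ is a Hilbert modular form I would use that $\mathbb{Z}[\sqrt2]$ is Euclidean, so $\mathrm{SL}_2(\mathcal{O}_K)$ is generated by the translations $T_b:(\tau_1,\tau_2)\mapsto(\tau_1+b,\tau_2+b')$, $b\in\mathcal{O}_K$, and the inversion $S:(\tau_1,\tau_2)\mapsto(-1/\tau_1,-1/\tau_2)$, reducing the transformation law to these two cases. For $T_b$ the parameterized matrix shifts by $\begin{psmallmatrix}\mathrm{Tr}(\lambda b)&\tfrac12\mathrm{Tr}(b)\\\tfrac12\mathrm{Tr}(b)&\tfrac27\mathrm{Tr}(\lambda'b)\end{psmallmatrix}$; writing $b=m+n\sqrt2$ this is $\begin{psmallmatrix}2m-n&m\\m&(4m+2n)/7\end{psmallmatrix}\in\begin{psmallmatrix}\mathbb{Z}&\mathbb{Z}\\\mathbb{Z}&\frac17\mathbb{Z}\end{psmallmatrix}$, an integral paramodular translation, so invariance of $P_8F$ under $T_b$ follows from that of $F$. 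For $S$ I would exhibit an explicit matrix $M_S\in K(7)$ — found by direct search, the analogue of the matrix displayed in the proof of Lemma~\ref{P5} — that carries the parameterized point to the one parameterized by $(-1/\tau_1,-1/\tau_2)$ and whose automorphy factor $\det(C\tau+D)^k$ equals $(\tau_1\tau_2)^k$ there. Holomorphy at the cusps is then automatic: $K=\mathbb{Q}(\sqrt2)$ is real quadratic, so Koecher's principle holds for $\mathrm{SL}_2(\mathcal{O}_K)$ and no separate boundary condition is needed. That $P_8$ preserves cusp forms is checked on Fourier expansions, exactly as in the Proposition relating $\Phi_iP_4F$ to the Siegel $\Phi$-operator: the terms surviving along the Hilbert cusps come from coefficients $\alpha(a,b,c)$ with $b^2=4Nac$, which vanish for a cusp form.

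For the behaviour under $V_7$ I would compute that the relabeling $\tau_1\leftrightarrow\tau_2$ sends the parameterized matrix $\begin{psmallmatrix}\tau&z\\z&w\end{psmallmatrix}$ to $\begin{psmallmatrix}\tfrac72w&z\\z&\tfrac27\tau\end{psmallmatrix}$, whereas $V_7$ sends it to $\begin{psmallmatrix}7w&-z\\-z&\tau/7\end{psmallmatrix}$; the two differ, so — unlike the swap lemma for $P_4$ — one cannot realize the swap by $V_7$ times a diagonal conjugation (the required linear map on symmetric matrices is not a congruence, even over $\mathbb{R}$). Instead I would produce $U\in K(7)$ such that $UV_7$ induces $\tau_1\leftrightarrow\tau_2$ on $\mathcal{H}_8$ only, i.e. $UV_7$ preserves $\mathcal{H}_8$ and acts there as the Galois swap, in the same spirit as the element $R_uV_5$ of Lemma~\ref{level5orders} that fixes $\mathcal{H}_5$ pointwise without being trivial globally. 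Its existence is guaranteed by the orthogonal interpretation (the stabilizer of $\mathcal{H}_8$ in $K(7)^+$ contains the Galois involution of the signature-$(2,2)$ sublattice), and granting such $U$ the (anti)symmetry claim follows verbatim from the argument of the $P_4$ swap lemma. I expect the main obstacle to be precisely the explicit construction of the inversion matrix $M_S$ and, especially, of this element $U$: both must be written down as concrete elements of $K(7)$ and checked to act correctly on $\mathcal{H}_8$ with the right automorphy factors, and the swap element is subtle because it realizes a symmetry that exists only along the surface.
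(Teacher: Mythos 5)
Your proposal is correct and follows essentially the same route as the paper, whose proof simply states that the argument is ``almost identical to Lemma~\ref{P5}'' and supplies the one missing ingredient you defer to a direct search: the inversion matrix $\begin{psmallmatrix} 0 & 0 & 2 & 1 \\ 0 & 0 & 1 & 4/7 \\ -4 & 7 & 0 & 0 \\ 7 & -14 & 0 & 0 \end{psmallmatrix} \in K(7)$, the exact analogue of the level-$5$ matrix, so that search does succeed. The only slip is your parenthetical claim that the swap $\tau_1\leftrightarrow\tau_2$ composed with $V_7$ is not induced by a congruence even over $\mathbb{R}$ --- the map $(A,B,C)\mapsto(A/2,-B,2C)$ on symmetric matrices is the congruence by $\mathrm{diag}(1/\sqrt{2},-\sqrt{2})$, just not one coming from $K(7)$ --- but this does not affect your (correct) strategy for the (anti)symmetry claim, which the paper leaves implicit.
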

\begin{proof} The proof is almost identical to Lemma~\ref{P5}. The only nontrivial point to check is that $P_8F$ transforms correctly under $(\tau_1,\tau_2) \mapsto (-1/\tau_1,-1/\tau_2)$, and this follows from the behavior of $F$ under $$R = \begin{psmallmatrix} 0 & 0 & 2 & 1 \\ 0 & 0 & 1 & 4/7 \\ -4 & 7 & 0 & 0 \\ 7 & -14 & 0 & 0 \end{psmallmatrix} \in K(7),$$ which maps $\begin{psmallmatrix} \lambda \tau_1 + \lambda' \tau_2 & (\tau_1 + \tau_2)/2 \\ (\tau_1 + \tau_2)/2 & 2\lambda' \tau_1/7 +2\lambda \tau_2/7 \end{psmallmatrix}$ to $\begin{psmallmatrix} \lambda (-\tau_1)^{-1} + \lambda' (-\tau_2)^{-1} & -(\tau_1^{-1} + \tau_2^{-1})/2 \\ -(\tau_1^{-1} + \tau_2^{-1})/2 & 2\lambda' (-\tau_1)^{-1}/7 +2\lambda (-\tau_2)^{-1}/7 \end{psmallmatrix}$.
\end{proof}

The antisymmetric even-weight Hilbert modular forms are exactly $s_5 s_9 \mathbb{C}[\mathbf{E}_2,s_4,\mathbf{E}_6]$. By separating monomials in $\mathbb{C}[\mathbf{E}_2,s_4,\mathbf{E}_6]$ into those containing an even or odd number of instances of $\mathbf{E}_2$, we see that for any  antisymmetric even-weight Hilbert modular form $f$ there are unique polynomials $P_1,P_2$ such that $$f = s_5 s_9 P_1(\mathbf{E}_2^2,s_4,\mathbf{E}_6) + s_5 s_9 \mathbf{E}_2 P_2(\mathbf{E}_2^2,s_4,\mathbf{E}_6).$$

\begin{lem}\label{evenlvl7} Define $h_{14} = \frac{g_5 b_{13}}{b_4}$ and $h_{16} = \frac{g_7 b_{13}}{b_4}$. The graded ring of even-weight paramodular forms is generated by the forms $$\mathcal{E}_4,b_4,\mathcal{E}_6,b_6,g_6,g_8,h_8,b_{10},g_{10},b_{12}^{sym},b_{12}^{anti},h_{14},h_{16}.$$
\end{lem}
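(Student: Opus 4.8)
The plan is to mirror the level~$5$ argument of Lemma~\ref{evenlvl5}, replacing $\mathcal{H}_5$ by $\mathcal{H}_8$, the Fricke involution $V_5$ by $V_7$, and the reduction form $b_{12}$ by $b_{12}^{anti}$. First I would split an even-weight form as $F = \tfrac{1}{2}(F + F|V_7) + \tfrac{1}{2}(F - F|V_7)$; the $V_7$-symmetric summand is a polynomial in the listed generators by Lemma~\ref{symmetriclvl7}, so I may assume $F$ is $V_7$-antisymmetric. By the preceding lemma on $P_8$ the restriction $P_8 F$ is then an antisymmetric even-weight Hilbert modular form for $\mathrm{SL}_2(\mathbb{Z}[\sqrt{2}])$, so it lies in $s_5 s_9 \mathbb{C}[\mathbf{E}_2, s_4, \mathbf{E}_6]$ and, splitting monomials by the parity of the exponent of $\mathbf{E}_2$, can be written uniquely as
$$P_8 F = s_5 s_9 P_1(\mathbf{E}_2^2, s_4, \mathbf{E}_6) + s_5 s_9 \mathbf{E}_2 P_2(\mathbf{E}_2^2, s_4, \mathbf{E}_6).$$

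Next I would check that $h_{14} = g_5 b_{13}/b_4$ and $h_{16} = g_7 b_{13}/b_4$ are holomorphic and realize the two ``leading'' pullbacks. Holomorphy is a divisor comparison: $b_{13}$ has order $11$ along $\mathcal{H}_1$ and order $1$ along $\mathcal{H}_4$, while each odd-weight lift $g_5, g_7$ has positive (odd) order along $\mathcal{H}_4$ by its odd weight, so $g_5 b_{13}$ and $g_7 b_{13}$ have order $\ge 8$ along $\mathcal{H}_1$ and $\ge 2$ along $\mathcal{H}_4$, matching $\mathrm{div}\, b_4 = 8\mathcal{H}_1 + 2\mathcal{H}_4$; the quotients are then holomorphic, and they are $V_7$-antisymmetric of weights $14$ and $16$ since $g_5, g_7$ are antisymmetric (odd-weight Gritsenko lifts) and $b_{13}, b_4$ are symmetric. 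Because the only antisymmetric even-weight Hilbert forms of weights $14$ and $16$ are scalar multiples of $s_5 s_9$ and $\mathbf{E}_2 s_5 s_9$, I get $P_8 h_{14} = c\, s_5 s_9$ and $P_8 h_{16} = c' \mathbf{E}_2 s_5 s_9$; the essential point is that $c, c' \neq 0$, which I would confirm from a single Fourier coefficient showing that $g_5$ and $g_7$ do not vanish identically on $\mathcal{H}_8$. Finally I need paramodular forms whose $P_8$-images generate the coefficient ring $\mathbb{C}[\mathbf{E}_2^2, s_4, \mathbf{E}_6]$; as at level~$5$ one expects $P_8 \mathcal{E}_4 = \mathbf{E}_2^2$ and can produce weight-$4$ and weight-$6$ combinations of $\mathcal{E}_4, b_4$ and of $\mathcal{E}_6, b_6, g_6$ pulling back to $s_4$ and $\mathbf{E}_6$.

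With these building blocks I would choose polynomial expressions $Q_1, Q_2$ in the coefficient forms with $P_8(h_{14} Q_1 + h_{16} Q_2) = P_8 F$, which is possible because $P_8$ is a ring homomorphism and $c, c' \neq 0$. Then $F' = F - h_{14} Q_1 - h_{16} Q_2$ satisfies $P_8 F' = 0$, so $F'$ vanishes along $\mathcal{H}_8$; being $V_7$-antisymmetric of even weight it also has a forced zero along $\mathcal{H}_{28}$. Since $\mathrm{div}\, b_{12}^{anti} = \mathcal{H}_8 + \mathcal{H}_{28}$ consists of simple zeros, $F'/b_{12}^{anti}$ is a holomorphic, $V_7$-symmetric, even-weight paramodular form of smaller weight, hence a polynomial in the listed generators by Lemma~\ref{symmetriclvl7}. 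As the subtracted terms $h_{14} Q_1, h_{16} Q_2$ are themselves polynomials in the generators, this expresses $F$ as required; note that unlike level~$5$ no parity refinement is needed, since $b_{12}^{anti}$ meets $\mathcal{H}_8$ with multiplicity one.

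The hard part is the nondegeneracy of the $P_8$ picture: I must know that $P_8 h_{14}, P_8 h_{16}$ are nonzero and that the images of $\mathcal{E}_4, b_4, \mathcal{E}_6, b_6, g_6$ really generate $\mathbb{C}[\mathbf{E}_2^2, s_4, \mathbf{E}_6]$, so that every antisymmetric even-weight Hilbert form $P_8 F$ can be matched. Any accidental vanishing would leave a residual pullback that cannot be cancelled and would stall the reduction, exactly as the absence of a weight-two form does in the symmetric arguments. I would resolve this by a finite Fourier-coefficient computation of the relevant $P_8$-images, which simultaneously pins down the scalars $c, c'$ and the explicit coefficient-form combinations.
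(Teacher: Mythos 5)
Your proposal is correct and follows essentially the same route as the paper: split off the $V_7$-symmetric part, match the $\mathcal{H}_8$-pullback of the antisymmetric part against $h_{14},h_{16}$ times forms pulling back onto $\mathbb{C}[\mathbf{E}_2^2,s_4,\mathbf{E}_6]$, and divide by $b_{12}^{anti}$ using the forced zero on $\mathcal{H}_{28}$. The only differences are cosmetic — your holomorphy check for $h_{14},h_{16}$ via the full divisor comparison is slightly more explicit than the paper's, and the paper pins down $P_8 g_5 = s_5$ and $P_8 g_7 = \mathbf{E}_2 s_5$ by identifying the coefficient of $q_1^{1/2+1/\sqrt{8}}q_2^{1/2-1/\sqrt{8}}$ in $P_8\mathcal{G}_\phi$ with $c(1,-5)$, exactly the single-Fourier-coefficient verification you propose.
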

\begin{proof} The problem is to produce antisymmetric paramodular forms of weights $14$ and $16$ whose images under $P_8$ are nonzero scalar multiples of $s_5 s_9$ and $\mathbf{E}_2 s_5 s_9$. For this it is enough to ensure that the images are nonzero. \\

Note that $h_{14}$ and $h_{16}$ are holomorphic since the zeros of $g_5$ and $g_7$ on $\mathcal{H}_4$ cancel out the zero of $b_4$. To prove that $P_8 h_{14}$ and $P_8 h_{16}$ are nonzero it is enough to show that $P_8 g_5$ and $P_8 g_7$ are nonzero. If $\phi(\tau,z) = \sum_{n,r} c(n,r) q^n \zeta^r$ is a Jacobi form of any weight and $\mathcal{G}_{\phi}$ is its Gritsenko lift, then a short calculation shows that the Fourier coefficient of $q_1^{1/2 + 1/\sqrt{8}} q_2^{1/2 - 1/\sqrt{8}}$ in $P_8 \mathcal{G}_{\phi}$ is just the coefficient $c(1,-5)$. This is enough to show that $P_8 g_5 = s_5$ and $P_8 g_7 = \mathbf{E}_2 s_5$. \\

Finally consider that $b_4$ and $b_6$ are cusp forms whose pullbacks to $\mathcal{H}_8$ are not zero so $$\mathbb{C}[\mathbf{E}_2^2, s_4, \mathbf{E}_6] \subseteq P_8 \Big( \mathbb{C}[\mathcal{E}_4,b_4,\mathcal{E}_6,b_6]\Big) .$$ In particular, if $F$ is any antisymmetric paramodular form of even weight then decomposing $$P_8 F = s_5 s_9 P_1(\mathbf{E}_2^2,s_4,\mathbf{E}_6) + \mathbf{E}_2 s_5 s_9 P_2(\mathbf{E}_2^2,s_4,\mathbf{E}_6) \in P_8 \mathbb{C}[\mathcal{E}_4,b_4,\mathcal{E}_6,b_6,h_{14},h_{16}]$$ shows that there is some polynomial $P$ for which $F - P(\mathcal{E}_4,b_4,\mathcal{E}_6,b_6,h_{14},h_{16})$ is antisymmetric and vanishes along $\mathcal{H}_8$ and is therefore divisible by $b_{12}^{anti}$. The quotient is symmetric and has even-weight so the claim follows from Lemma~\ref{symmetriclvl7}.
\end{proof}

\textbf{Odd-weight paramodular forms.} We finish the computation of the graded ring $M_*(K(7))$ by reducing all odd-weight paramodular forms against the Borcherds product $b_7$, by matching (quasi-)pullbacks successively to the Humbert surfaces $\mathcal{H}_8$, $\mathcal{H}_4$ and finally $\mathcal{H}_1$ against those of holomorphic quotients of Gritsenko lifts.

\begin{lem} Define $h_{15}^{(1)} = \frac{g_6 b_{13}}{b_4}.$ Let $F$ be an odd-weight paramodular form of level $7$. There is a polynomial $P$ such that $F - P(\mathcal{E}_4,b_4,g_5,\mathcal{E}_6,b_6,g_7,b_{13},h_{15}^{(1)})$ vanishes along $\mathcal{H}_8$.
\end{lem}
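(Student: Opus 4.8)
The plan is to mirror Lemma~\ref{evenlvl7}: decompose $F = \frac{1}{2}(F + F|V_7) + \frac{1}{2}(F - F|V_7)$ into its $V_7$-symmetric and $V_7$-antisymmetric parts and treat the two separately under $P_8$. Both summands have odd weight, so by the preceding lemma $P_8$ carries them to a \emph{symmetric} and an \emph{antisymmetric} odd-weight Hilbert modular form for $\mathrm{SL}_2(\mathbb{Z}[\sqrt{2}])$, respectively. By Hammond's structure theorem these lie in $s_9\,\mathbb{C}[\mathbf{E}_2,s_4,\mathbf{E}_6]$ and $s_5\,\mathbb{C}[\mathbf{E}_2,s_4,\mathbf{E}_6]$. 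It thus suffices to exhibit, among polynomials in the listed generators, forms whose $P_8$-images realize everything that can occur in each of these two modules, after which the matching polynomial $P$ is assembled from the two pieces.

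The antisymmetric part is handled for free by the computation already made in Lemma~\ref{evenlvl7}, where $P_8 g_5 = s_5$, $P_8 g_7 = \mathbf{E}_2 s_5$, and $\mathbb{C}[\mathbf{E}_2^2,s_4,\mathbf{E}_6] \subseteq P_8(\mathbb{C}[\mathcal{E}_4,b_4,\mathcal{E}_6,b_6])$. Splitting $s_5\,\mathbb{C}[\mathbf{E}_2,s_4,\mathbf{E}_6] = s_5\,\mathbb{C}[\mathbf{E}_2^2,s_4,\mathbf{E}_6] \oplus \mathbf{E}_2 s_5\,\mathbb{C}[\mathbf{E}_2^2,s_4,\mathbf{E}_6]$ according to the parity of the exponent of $\mathbf{E}_2$, I match the first summand using $g_5$ times the even generators and the second using $g_7$ times the even generators. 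This reproduces the pullback of the antisymmetric part of $F$ exactly, with no residual condition.

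For the symmetric part I would use $b_{13}$ and $h_{15}^{(1)}$. Neither has $\mathcal{H}_8$ in its divisor --- the divisor of $b_{13}$ is $11\mathcal{H}_1 + \mathcal{H}_4 + \mathcal{H}_{28}$, and $h_{15}^{(1)} = g_6 b_{13}/b_4$ is holomorphic with all three factors nonvanishing on $\mathcal{H}_8$ --- so $P_8 b_{13}$ and $P_8 h_{15}^{(1)}$ are nonzero symmetric forms of weights $13$ and $15$. Coefficients of odd $\mathbf{E}_2$-parity, needed to reach the whole module, are supplied by the products $g_5^2 \mapsto s_5^2$ and $g_5 g_7 \mapsto \mathbf{E}_2 s_5^2$. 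The point reconciling the weights is that $F$, being of odd weight, already vanishes along $\mathcal{H}_1$ and $\mathcal{H}_4$ (its quasi-pullbacks there would be odd-weight forms for $\mathrm{SL}_2(\mathbb{Z}) \times \mathrm{SL}_2(\mathbb{Z})$ and for $G$, hence zero); therefore $P_8 F$ vanishes along the Hirzebruch-Zagier divisors that $\mathcal{H}_1$ and $\mathcal{H}_4$ cut out on $\mathcal{H}_8$, confining it to a proper submodule of $s_9\,\mathbb{C}[\mathbf{E}_2,s_4,\mathbf{E}_6]$ whose lowest generators occur in weights $13$ and $15$ rather than $9$.

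The main obstacle is the symmetric case, specifically verifying that $P_8 b_{13}$ and $P_8 h_{15}^{(1)}$ --- together with the coefficient forms above --- generate precisely the submodule of pullbacks that occurs. I would settle this computationally: evaluate $P_8$ on each generator from its Fourier-Jacobi expansion (using, as in Lemma~\ref{evenlvl7}, that the coefficient of $q_1^{1/2 + 1/\sqrt{8}} q_2^{1/2 - 1/\sqrt{8}}$ in $P_8 \mathcal{G}_\phi$ is a single Jacobi coefficient), express $P_8 b_{13}$ and $P_8 h_{15}^{(1)}$ as explicit polynomials in $\mathbf{E}_2, s_4, \mathbf{E}_6$, and confirm by a Hilbert-series comparison that the module they generate matches the space of symmetric odd-weight pullbacks. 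The antisymmetric half requires no such check.
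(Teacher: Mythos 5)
Your strategy is the paper's own: split $F$ into its $V_7$-symmetric and $V_7$-antisymmetric parts, push both through $P_8$ into Hammond's ring, cover the antisymmetric part with $g_5,g_7$ and coefficients from $\mathcal{E}_4,b_4,\mathcal{E}_6,b_6$, and cover the symmetric part with $b_{13}$ and $h_{15}^{(1)}$ after observing that the forced zero of $F$ on $\mathcal{H}_4$ makes $P_8F$ vanish on the diagonal. The one real gap is precisely the step you defer to computation: you propose to confirm ``by a Hilbert-series comparison'' that $b_{13}$ and $h_{15}^{(1)}$ generate the module of symmetric odd-weight pullbacks, but that comparison presupposes knowing the Hilbert series of that module, which is the thing to be determined. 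Exhibiting pullbacks only bounds the module from below; the needed upper bound is structural, not computational. The paper closes it with the fact (as in Hammond) that a symmetric Hilbert modular form for $\mathbb{Q}(\sqrt{2})$ vanishing on the diagonal vanishes there to even order and is therefore divisible by $s_4$, so the symmetric pullbacks lie in $s_4s_9\,\mathbb{C}[\mathbf{E}_2,s_4,\mathbf{E}_6]$, whose two $\mathbf{E}_2$-parity components are generated in weights $13$ and $15$. Once the target module is pinned down this way, the relevant weight-$13$ and weight-$15$ spaces are one-dimensional, so only the nonvanishing of $P_8b_{13}$ and $P_8h_{15}^{(1)}$ remains; the latter reduces to $P_8g_6\ne 0$, which you assert but which the paper actually proves via the nonzero coefficient of $q\zeta^{-5}$ in the input Jacobi form of $g_6$ (the same single-coefficient criterion you invoke for $g_5$ and $g_7$).

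A smaller point: the auxiliary matches $g_5^2\mapsto s_5^2$ and $g_5g_7\mapsto\mathbf{E}_2s_5^2$ are unnecessary and reflect a bookkeeping slip. The odd $\mathbf{E}_2$-parity in the symmetric module is absorbed entirely by the second module generator $h_{15}^{(1)}$, whose pullback is a nonzero multiple of $\mathbf{E}_2s_4s_9$, so only even-parity coefficients --- already supplied by $\mathcal{E}_4,b_4,\mathcal{E}_6,b_6$ --- are ever needed; and since $s_5^2$ has weight $10$, these extra forms could not in any case produce the weight-$15$ class $\mathbf{E}_2s_4s_9$ from $b_{13}$ alone, so they do not substitute for $h_{15}^{(1)}$.
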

\begin{proof} Recall the decomposition of odd-weight Hilbert modular forms for the field $K = \mathbb{Q}(\sqrt{2})$: $$\bigoplus_{k \, \text{odd}} M_k(\mathrm{SL}_2(\mathbb{Z}[\sqrt{2}])) = s_5 \mathbb{C}[\mathbf{E}_2,s_4,\mathbf{E}_6] \oplus s_9 \mathbb{C}[\mathbf{E}_2,s_4,\mathbf{E}_6],$$ where $s_5$ and $s_9$ are the (unique) antisymmetric resp. symmetric Borcherds products with trivial character and weights $5$ and $9$. \\

Since $F$ has odd weight, it has a forced zero on the Humbert surface $\mathcal{H}_4$. Since $\mathcal{H}_4$ and $\mathcal{H}_8$ intersect along the diagonal it follows that $P_8 F$ vanishes along the diagonal. Therefore we cannot, for example, realize $s_9$ as the pullback of a paramodular form. Actually one can show as in \cite{H} that any symmetric form which vanishes on the diagonal is also divisible by Hammond's cusp form $s_4$, so the odd-weight Hilbert modular forms which vanish on the diagonal are exactly $s_4 s_9 \mathbb{C}[\mathbf{E}_2,s_4,\mathbf{E}_6]$. Since $\mathbf{E}_2$ does not arise as a pullback it is more convenient to write \begin{align*} P_8 F &\in  s_5 \mathbb{C}[\mathbf{E}_2,s_4,\mathbf{E}_6] \oplus s_4 s_9 \mathbb{C}[\mathbf{E}_2,s_4,\mathbf{E}_6] \\ &= s_5 \mathbb{C}[\mathbf{E}_2^2,s_4,\mathbf{E}_6] \oplus \mathbf{E}_2 s_5 \mathbb{C}[\mathbf{E}_2^2,s_4,\mathbf{E}_6] \oplus s_4 s_9 \mathbb{C}[\mathbf{E}_2^2,s_4,\mathbf{E}_6] \oplus \mathbf{E}_2 s_4 s_9 \mathbb{C}[\mathbf{E}_2^2,s_4,\mathbf{E}_6]. \end{align*}  

Therefore we need to find antisymmetric paramodular forms of weights $5$ and $7$ and symmetric paramodular forms of weights $13$ and $15$ which are not identically zero along $\mathcal{H}_8$, since the spaces of possible pullbacks are one-dimensional and the pullbacks will have to equal $s_5$, $\mathbf{E}_2s_5$, $s_4 s_9$ and $\mathbf{E}_2 s_4 s_9$ up to nonzero constant multiples. We observed earlier that $g_5$ and $g_7$ pullback to $s_5$ and $\mathbf{E}_2 s_5$. Since the input Jacobi form to the Gritsenko lift $g_6$ has a nonzero Fourier coefficient of $q \zeta^{-5}$, the same argument shows that $P_8 g_6$ is nonzero. Finally $g_6$ has a zero along $\mathcal{H}_4$ so the quotient $h_{15}^{(1)}$ is holomorphic and nonzero on $\mathcal{H}_8$. 
\end{proof}

\begin{lem} Let $h_9^{(1)} = \frac{g_6g_7}{b_4}$, $h_{11}^{(1)} = \frac{g_6^2 g_7}{b_4^2}$, $h_{11}^{(2)} = \frac{b_7 g_8}{b_4}$, $h_{13} = \frac{g_5^2 b_7}{b_4}$, $h_{15}^{(2)} = \frac{g_5^2 g_6 b_7}{b_4^2}$. For any odd-weight paramodular form $F$ there is a polynomial $P$ such that $$F - P(\mathcal{E}_4,b_4,g_5,\mathcal{E}_6,b_6,g_6,g_7,g_8,h_9^{(1)},g_{10},h_{11}^{(1)},h_{11}^{(2)},b_{12}^{anti},b_{13},h_{13},h_{15}^{(1)},h_{15}^{(2)})$$ has a zero on $\mathcal{H}_8$ and at least a triple zero along $\mathcal{H}_4$.
\end{lem}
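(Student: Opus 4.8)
The plan is to set up the division of $F$ by the Borcherds product $b_7$, whose divisor is $5\mathcal{H}_1 + 3\mathcal{H}_4 + \mathcal{H}_8$; the present lemma is responsible only for the $\mathcal{H}_8$ and $3\mathcal{H}_4$ parts, the diagonal being dealt with afterwards. First I would invoke the previous lemma to subtract a polynomial in $\mathcal{E}_4,b_4,g_5,\mathcal{E}_6,b_6,g_7,b_{13},h_{15}^{(1)}$ and thereby assume $F$ already vanishes along $\mathcal{H}_8$; every subsequent correction will itself be chosen to vanish on $\mathcal{H}_8$, so that this property is preserved throughout. Since $F$ has odd weight and the group $G$ carries no nonzero modular form of odd weight, the quasi-pullback of $F$ to $\mathcal{H}_4$ has weight $k + \mathrm{ord}_{\mathcal{H}_4} F$, which is forced to be even, so $\mathrm{ord}_{\mathcal{H}_4} F$ is odd. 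Consequently it suffices to annihilate the \emph{order-one} quasi-pullback: once $F$ vanishes to order $\ge 2$ along $\mathcal{H}_4$, parity upgrades this automatically to order $\ge 3$.

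The core step is therefore to describe and then match the space of order-one quasi-pullbacks to $\mathcal{H}_4$. By the same Fourier-coefficient reasoning used to prove that $\Phi_1 f(\tau/2)$ and $\Phi_2 f(\tau/2)$ have level one (applied now to the leading conormal derivative rather than to the restriction itself), this quasi-pullback is a \emph{cusp} form for $G$ lying in the subring $A_*$ of Proposition~\ref{genA}, of even weight $k+1$. Its symmetry type is opposite to the $V_7$-symmetry of $F$, because the Fricke involution acts by $-1$ on the conormal direction of $\mathcal{H}_4$ (the infinitesimal form of the swap identity $P_4(F|V_7)(\tau_1,\tau_2) = P_4 F(\tau_2,\tau_1)$), so a $V_7$-symmetric $F$ produces an antisymmetric $G$-form and conversely. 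I would split $F$ into its $V_7$-symmetric and antisymmetric parts and treat the two target modules separately: the cuspidal symmetric forms, generated over $A_*^{sym}$ by $\Delta_6, X_2 \Delta_6, X_4 \Delta_6$, and the cuspidal antisymmetric forms, generated over $A_*^{sym}$ by $\Delta_6 X_8$ and $X_2 \Delta_6 X_8$ (from Proposition~\ref{genA} and Lemma~\ref{H4anti}).

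To realize these quasi-pullbacks I would produce odd-weight paramodular forms that vanish on $\mathcal{H}_8$, have a \emph{simple} zero on $\mathcal{H}_4$, and exhibit both symmetry types, then reduce their quasi-pullbacks to the module generators above. The antisymmetric seeds come from $b_7$, which supplies both the $\mathcal{H}_8$-zero and an odd $V_7$-weight, divided into Gritsenko lifts of order zero on $\mathcal{H}_4$, e.g. $h_{11}^{(2)} = b_7 g_8 / b_4$; the symmetric seeds come from $b_{12}^{anti}$, whose divisor contains $\mathcal{H}_8$, multiplied by antisymmetric odd forms such as $g_7$. Multiplying any such seed by the even generators $\mathcal{E}_4,\mathcal{E}_6,b_6,g_8,g_{10}$, all of order zero on $\mathcal{H}_4$ and with $P_4$-images generating $A_*^{sym}$, moves the quasi-pullback through the entire $A_*^{sym}$-module without disturbing the simple $\mathcal{H}_4$-zero or the $\mathcal{H}_8$-vanishing. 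The leading conormal coefficients identifying these quasi-pullbacks with the stated module generators are computed exactly as in the earlier lemmas, from the Gritsenko-lift Fourier formula. After subtracting the matching combination, $F$ vanishes to order $\ge 2$, hence $\ge 3$, along $\mathcal{H}_4$ while still vanishing on $\mathcal{H}_8$, which is the assertion.

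The main obstacle is precisely this identification and generation step: one must justify that the conormal derivative is cuspidal of level one (the boundary and Koecher analysis underlying $A_*$), pin down the symmetry flip, and then verify by explicit Fourier computation that the chosen $\mathcal{H}_8$-vanishing, simple-$\mathcal{H}_4$-zero seeds really generate both cuspidal modules over $A_*^{sym}$ in every relevant weight. This is where the exact orders $\mathrm{ord}_{\mathcal{H}_4} g_5 = \mathrm{ord}_{\mathcal{H}_4} g_7 = 1$ and $\mathrm{ord}_{\mathcal{H}_4} g_6 = 2$, established from the $P_4$ computations, must be used to guarantee that the quotients $h_9^{(1)}, h_{11}^{(1)}, h_{11}^{(2)}, h_{13}, h_{15}^{(2)}$ are holomorphic with exactly the claimed divisors; any miscount would break either holomorphy or the matching.
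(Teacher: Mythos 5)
Your overall architecture matches the paper's: reduce to the case $\mathrm{ord}_{\mathcal{H}_8}F\ge 1$ via the previous lemma, use the parity of $\mathrm{ord}_{\mathcal{H}_4}$ for odd weights so that killing the order-one quasi-pullback suffices, split by $V_7$-symmetry (with the symmetry flip on the quasi-pullback), and match against seeds built from $b_7$ and $b_{12}^{anti}$ multiplied through by $\mathcal{E}_4,\mathcal{E}_6,b_6,g_8,g_{10}$. However, there is a genuine gap in the matching step: you identify the target as the full module of cuspidal symmetric forms (generated over $A_*^{sym}$ by $\Delta_6, X_2\Delta_6, X_4\Delta_6$), resp.\ cuspidal antisymmetric forms (generated by $\Delta_6 X_8, X_2\Delta_6 X_8$). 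But every seed you propose vanishes on $\mathcal{H}_8$ (or is a multiple of $b_{12}^{anti}$ with $P_4 b_{12}^{anti}=X_4X_8$, or of $b_7$ with quasi-pullback $X_4\Delta_6$), so its quasi-pullback to $\mathcal{H}_4$ is a multiple of $X_4$ and vanishes on the diagonal $\tau_1=\tau_2$. Such seeds can never produce $\Delta_6$, $X_2\Delta_6$, or $\Delta_6 X_8$, none of which lie in the ideal $(X_4)$ (e.g.\ $\Delta_6(\tau,\tau)=\Delta(\tau)\ne 0$), so the generation claim you plan to ``verify by explicit Fourier computation'' is false for the module as you have described it, and the reduction cannot be completed.

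The missing idea is the one the paper leans on: since $\mathcal{H}_4\cap\mathcal{H}_8$ is precisely the diagonal inside $\mathcal{H}_4$, a paramodular form vanishing on $\mathcal{H}_8$ has quasi-pullback to $\mathcal{H}_4$ that vanishes along the diagonal, hence lies in $X_4\Delta_6\,\mathbb{C}[X_2,X_4,\Delta_6]$ in the symmetric-$G$-form case (and, after the additional $\mathcal{H}_{28}$/$b_{13}$ argument, in $X_4\Delta_6 X_8\,\mathbb{C}[X_2,X_4,\Delta_6]$ in the antisymmetric case). This shrinks the target exactly to what the $X_4$-divisible seeds can reach, and the containment $X_4\Delta_6\mathbb{C}[X_2,X_4,\Delta_6]\subseteq\sum_{i=0}^{3}X_2^iX_4\Delta_6 A_*^{sym}$ is what dictates that one needs seeds in the four weights $9,11,13,15$ (namely $b_9,h_{11}^{(2)},h_{13},h_{15}^{(2)}$) resp.\ $17,19,21,23$ (namely $g_5b_{12}^{anti},g_7b_{12}^{anti},h_9^{(1)}b_{12}^{anti},h_{11}^{(1)}b_{12}^{anti}$) --- in particular it explains why $h_{13}$ and $h_{15}^{(2)}$ appear in the statement at all, which your list of seeds does not account for. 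Your two-sided error (target too large, seeds too weak for it) happens to cancel in principle, but as written the proof does not establish the lemma; you must insert the diagonal-vanishing observation and redo the weight count of the required generators.
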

\begin{proof} We need to understand the possible quasi-pullbacks of (symmetric or antisymmetric) odd-weight paramodular forms $F$ which vanish along $\mathcal{H}_8$ and whose zero along $\mathcal{H}_4$ is only of order one. Suppose first that $F$ is antisymmetric. \\

Since $\mathcal{H}_4 \cap \mathcal{H}_8$ is the diagonal in $\mathcal{H}_4$, if $F$ is any paramodular form which vanishes along $\mathcal{H}_8$ then its quasi-pullback to $\mathcal{H}_4$ vanishes along the diagonal. (This implies, for example, that $b_7$ itself has quasi-pullback $X_4 \Delta_6$.) The ideal of $A_*^{sym}$ of cusp forms which vanish along the diagonal is $X_4 \Delta_6 \mathbb{C}[X_2,X_4,\Delta_6]$, which one can show is contained in $$X_4 \Delta_6 A_*^{sym} + X_2 X_4 \Delta_6 A_*^{sym} + X_2^2 X_4 \Delta_6 A_*^{sym} + X_2^3 X_4 \Delta_6 A_*^{sym}.$$ Therefore we only need to find antisymmetric forms of weights $9,11,13,15$ which vanish along $\mathcal{H}_8$ and whose quasi-pullbacks to $\mathcal{H}_4$ are scalar multiples of the generators $X_4 \Delta_6$, $X_2X_4 \Delta_6$, $X_2^2 X_4 \Delta_6$, $X_2^3 X_4 \Delta_6$. \\

We claim that $b_9, h_{11}^{(2)},h_{13},h_{15}^{(2)}$ have this property. This is clear for $b_9$. First note that $h_{11}^{(1)},h_{13},h_{15}^{(1)}$ are holomorphic since $\mathrm{ord}_{\mathcal{H}_1} g_5 = 7$, $\mathrm{ord}_{\mathcal{H}_1} g_6 = 6$ and $\mathrm{ord}_{\mathcal{H}_1} g_8 = 4$, and they vanish on $\mathcal{H}_8$ since $b_7$ does. Note that $g_6$ has a double zero on $\mathcal{H}_4$. (To prove this one can simply compute $P_4(g_6b_6 / b_4) \ne 0$.) The quasi-pullback of $g_6$ to $\mathcal{H}_4$ is therefore a multiple of $X_2 \Delta_6$. Now we only need to use the fact that $b_7$ and $b_4$ quasi-pullback to $X_4 \Delta_6$ and $\Delta_6$ and the fact that $P_4(g_8)$ is $X_2 \Delta_6$. \\

It follows that there are polynomials $P_1,P_2,P_3,P_4$ such that the quasi-pullback of $F$ to $\mathcal{H}_4$ equals that of $$b_9 P_1(\mathcal{E}_4,\mathcal{E}_6,b_6,g_8,g_{10}) + h_{11}^{(2)} P_2(\mathcal{E}_4,\mathcal{E}_6,b_6,g_8,g_{10}) + h_{13} P_3(\mathcal{E}_4,\mathcal{E}_6,b_6,g_8,g_{10}) + h_{15}^{(2)} P_4(\mathcal{E}_4,\mathcal{E}_6,b_6,g_8,g_{10}),$$ and the claim follows because the latter continues to vanish on $\mathcal{H}_8$. \\

Now suppose that $F$ is symmetric. Then it has a forced zero on $\mathcal{H}_{28}$ and therefore the quotient $\frac{b_6^2 F}{b_{13}}$ is holomorphic and vanishes on $\mathcal{H}_8$. In particular the quasi-pullback of $\frac{b_6^2 F}{b_{13}}$ to $\mathcal{H}_4$ is a multiple of $X_4$. This implies that the quasi-pullback of $F$ to $\mathcal{H}_4$ is a multiple of $X_4 \Delta_6 X_8$. We can use nearly the same argument as the previous case if we can find symmetric forms of weights $17,19,21,23$ which vanish along $\mathcal{H}_8$ and have a simple zero along $\mathcal{H}_4$, and whose quasi-pullbacks to $\mathcal{H}_4$ are $X_4 \Delta_6 X_8$, $X_2 X_4 \Delta_6 X_8$, $X_2^2 X_4 \Delta_6 X_8$ and $X_2^3 X_4 \Delta_6 X_8$, respectively. \\

We claim that the products $g_5 b_{12}^{anti}, g_7 b_{12}^{anti}, h_9^{(1)} b_{12}^{anti}$ and $h_{11}^{(1)} b_{12}^{anti}$ have this property. Since $b_{12}^{anti}$ vanishes on $\mathcal{H}_8$ and $P_4 b_{12}^{anti} = X_4X_8$ it is enough to show that $g_5$ and $g_7$ have only a simple zero on $\mathcal{H}_4$. But by direct computation one can show (for example) that $P_4(g_5g_7 / b_4) \ne 0$, which is enough. \end{proof}

\begin{lem} Let $h_9^{(2)} = \frac{g_6 b_7}{b_4}$ and $h_{11}^{(3)} = \frac{g_6^2 b_7}{b_4^2}$. Every paramodular form of level $7$ is a polynomial expression in the generators $$\mathcal{E}_4,b_4,g_5,\mathcal{E}_6,g_6,b_6,b_7,g_7,g_8,h_8,b_9,h_9^{(1)},h_9^{(2)},b_{10},g_{10},h_{11}^{(1)},h_{11}^{(2)},h_{11}^{(3)},b_{12}^{sym},b_{12}^{anti},b_{13},h_{13},h_{14},h_{15}^{(1)},h_{15}^{(2)},h_{16}.$$ 
\end{lem}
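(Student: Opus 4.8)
The plan is to reduce every odd-weight form against the Borcherds product $b_7$, whose divisor is $\mathrm{div}\, b_7 = 5\mathcal{H}_1 + 3\mathcal{H}_4 + \mathcal{H}_8$. After splitting $F$ into its $V_7$-symmetric and $V_7$-antisymmetric parts I may treat a single parity at a time; the even-weight case is already settled by Lemma~\ref{evenlvl7}, so assume $F$ has odd weight $k$. By the preceding lemma I may subtract a polynomial in the generators and assume that $F$ vanishes along $\mathcal{H}_8$ and to order at least three along $\mathcal{H}_4$. Two of the three local conditions needed to divide by $b_7$ are then in place, and the whole problem reduces to forcing $F$ to vanish to order at least five along the diagonal $\mathcal{H}_1$.

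First I would examine the quasi-pullback of $F$ to $\mathcal{H}_1$, a cusp form for $\mathrm{SL}_2(\mathbb{Z}) \times \mathrm{SL}_2(\mathbb{Z})$ of weight $k+n$ with $n = \mathrm{ord}_{\mathcal{H}_1} F$. The essential point is its behaviour under the swap $\tau_1 \leftrightarrow \tau_2$: the leading Taylor coefficient of a form with $V_7$-eigenvalue $\epsilon$ carries the sign $\epsilon(-1)^n$, so for antisymmetric $F$ (here $n$ is odd) the quasi-pullback is \emph{symmetric}, whereas for symmetric $F$ it is \emph{antisymmetric}. Recall $M_*^{sym}(\mathrm{SL}_2(\mathbb{Z}) \times \mathrm{SL}_2(\mathbb{Z})) = \mathbb{C}[E_4 \otimes E_4, E_6 \otimes E_6, \Delta \otimes \Delta]$ with cusp ideal generated by $\Delta \otimes \Delta$, while the antisymmetric cusp forms form a free module over this ring generated in weight $24$ by $(\Delta \otimes \Delta)(E_4^3 \otimes \Delta - \Delta \otimes E_4^3)$. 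Thus in each parity it suffices to exhibit paramodular forms of orders $1$ and $3$ along $\mathcal{H}_1$ with the correct swap-symmetry, and to realize $E_4 \otimes E_4$, $E_6 \otimes E_6$, $\Delta \otimes \Delta$ as diagonal pullbacks of even-weight forms.

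For the coefficient ring I would use $P_1 \mathcal{E}_4 = E_4 \otimes E_4$ and $P_1 \mathcal{E}_6 = E_6 \otimes E_6$, together with $b_{12}^{sym}$, which is a cusp form (its divisor contains the square-discriminant surfaces $\mathcal{H}_4$ and $\mathcal{H}_9$) of order zero along $\mathcal{H}_1$, so that $P_1 b_{12}^{sym}$ is a nonzero multiple of $\Delta \otimes \Delta$. For the symmetric quasi-pullbacks (the antisymmetric case of $F$) I would use the new forms $h_9^{(2)} = g_6 b_7 / b_4$ and $h_{11}^{(3)} = g_6^2 b_7 / b_4^2$: they are holomorphic, and since $\mathrm{ord}_{\mathcal{H}_1} g_6 = 6$ by Lemma~\ref{symmetriclvl7} they vanish to order exactly $3$ and $1$ along $\mathcal{H}_1$ (and to order at least $3$ along $\mathcal{H}_4$, at least $1$ along $\mathcal{H}_8$), with quasi-pullbacks equal to nonzero multiples of $\Delta \otimes \Delta$. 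For the antisymmetric quasi-pullbacks (the symmetric case of $F$) I would instead use the products $b_{12}^{anti} h_9^{(2)}$ and $b_{12}^{anti} h_{11}^{(3)}$; since $b_{12}^{anti}$ has order zero along $\mathcal{H}_1$ with antisymmetric pullback a nonzero multiple of $E_4^3 \otimes \Delta - \Delta \otimes E_4^3$, these products again have orders $3$ and $1$ along $\mathcal{H}_1$ and have antisymmetric quasi-pullbacks generating the antisymmetric cusp module.

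Matching first the order-one part and then the order-three part of the quasi-pullback -- multiplying the forms above by suitable polynomials in $\mathcal{E}_4$, $\mathcal{E}_6$, $b_{12}^{sym}$ -- produces a polynomial $P$ in the generators for which $F - P$ vanishes to order at least five along $\mathcal{H}_1$ while keeping its zeros along $\mathcal{H}_4$ and $\mathcal{H}_8$. Then $F - P$ is divisible by $b_7$, and the quotient is an even-weight paramodular form, hence a polynomial in the generators by Lemma~\ref{evenlvl7}; this completes the reduction. I expect the symmetry bookkeeping to be the main obstacle: one must track the sign $\epsilon(-1)^n$ correctly and confirm that the quasi-pullbacks of the chosen (products of) generators really span the full space of symmetric, respectively antisymmetric, cusp forms of every weight, so that the vanishing along $\mathcal{H}_1$ can be pushed all the way to order five before dividing by $b_7$.
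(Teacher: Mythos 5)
Your proposal follows the same route as the paper: reduce against $b_7$ after the preceding lemma has arranged $\mathrm{ord}_{\mathcal{H}_8} F \ge 1$ and $\mathrm{ord}_{\mathcal{H}_4} F \ge 3$, and kill the possible order-$1$ and order-$3$ leading terms along the diagonal using $h_{11}^{(3)}$ and $h_9^{(2)}$ (whose orders $1$ and $3$ along $\mathcal{H}_1$ follow, exactly as you say, from $\mathrm{ord}_{\mathcal{H}_1} g_6 = 6$ and $\mathrm{ord}_{\mathcal{H}_1} b_7 = 5$), then quote Lemma~\ref{evenlvl7} for the even-weight quotient. The one place you go beyond the paper is the symmetry bookkeeping, and your extra care is justified: the paper's compressed proof only exhibits correction terms whose quasi-pullbacks are multiples of $\Delta \otimes \Delta$, i.e.\ it only matches the \emph{symmetric} cusp forms for $\mathrm{SL}_2(\mathbb{Z}) \times \mathrm{SL}_2(\mathbb{Z})$, whereas a $V_7$-symmetric odd-weight $F$ with odd order $n$ along the diagonal has an \emph{antisymmetric} quasi-pullback (eigenvalue $\epsilon(-1)^n$, as you compute), which can be nonzero once the weight $k+n$ reaches $24$. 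Your fix --- multiplying by $b_{12}^{anti}$, whose restriction to the diagonal is a nonzero multiple of $E_4^3 \otimes \Delta - \Delta \otimes E_4^3$, so that $b_{12}^{anti} h_9^{(2)}$ and $b_{12}^{anti} h_{11}^{(3)}$ have orders $3$ and $1$ along $\mathcal{H}_1$ with quasi-pullbacks spanning the rank-one module of antisymmetric cusp forms generated in weight $24$ --- is correct, stays inside the listed generators, and preserves the required vanishing along $\mathcal{H}_4$ and $\mathcal{H}_8$ (both factors have the right orders there). Your identification of $P_1 b_{12}^{sym}$ with a nonzero multiple of $\Delta \otimes \Delta$ and of $P_1 \mathcal{E}_4$, $P_1 \mathcal{E}_6$ with $E_4 \otimes E_4$, $E_6 \otimes E_6$ matches what the paper establishes in Lemma~\ref{symmetriclvl7}. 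In short: same argument, but you have made explicit (and correctly resolved) a case that the paper's ``the argument is familiar by now'' leaves implicit.
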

\begin{proof} We only need to consider the case that $F$ has odd weight. By the previous lemmas we can assume without loss of generality that $\mathrm{ord}_{\mathcal{H}_8} F \ge 1$ and $\mathrm{ord}_{\mathcal{H}_4} F \ge 3$. To reduce against $b_7$ we need to handle the cases that $\mathrm{ord}_{\mathcal{H}_1} F \in \{1,3\}$. Since the argument is familiar by now we mention only that we need to find paramodular forms of weights $9$ and $11$, with at least triple zeros along $\mathcal{H}_4$, zeros along $\mathcal{H}_8$, and orders exactly $3$ and $1$ along the diagonal (such that the quasi-pullbacks are both $\Delta \otimes \Delta$). But it is easy to see that $h_9^{(2)}$ and $h_{11}^{(3)}$ have this property using the fact that $\mathrm{ord}_{\mathcal{H}_1} g_6= 6$. \\

Having subtracted away the possible first-order and third-order terms of $F$ along the diagonal, we obtain a form which is divisible by $b_7$. The quotient $F / b_7$ has even weight so the claim follows from Lemma~\ref{evenlvl7}.
\end{proof}

At this point we have found too many generators: the forms $h_9^{(2)},h_{11}^{(2)},h_{11}^{(3)},h_{13},h_{15}^{(2)}$ turn out to be unnecessary. It seems difficult to prove this without resorting to Fourier expansions. A computation shows \begin{align*} h_9^{(2)} &=  \frac{1}{14} h_9^{(1)} + \frac{48}{7} b_9 - \frac{1224}{301} b_4g_5 - \frac{1}{14} \mathcal{E}_4 g_5; \\  h_{11}^{(2)} &= - \frac{1}{384} h_{11}^{(1)} + \frac{2925}{191} g_5 b_6 - \frac{5}{192} g_5 \mathcal{E}_6 - \frac{33681}{16426} b_4 g_7 - \frac{1124189}{32852} b_4 b_7 + \frac{11}{384} \mathcal{E}_4 g_7 + \frac{1}{96} \mathcal{E}_4 b_7; \\ h_{11}^{(3)} &= \frac{1}{8} h_{11}^{(1)} - \frac{28080}{191} g_5 b_6 + \frac{1}{4} g_5 \mathcal{E}_6 + \frac{114912}{8213} b_4 g_7 + \frac{2201148}{8213} b_4 b_7 - \frac{3}{8} \mathcal{E}_4 g_7 - \frac{3}{2} \mathcal{E}_4 b_7; \\ h_{13} &= \frac{1}{14} b_4 h_9^{(1)} + \frac{20}{7} b_4 b_9 - \frac{1224}{301} b_4^2 g_5 - \frac{1}{14} \mathcal{E}_4 b_4 g_5; \\ h_{15}^{(2)} &= \frac{3}{32} b_4 h_{11}^{(1)} - \frac{11700}{191} b_4 g_5 b_6 + \frac{5}{48} b_4 g_5 \mathcal{E}_6 + \frac{28398}{8213} b_4^2 g_7 + \frac{722325}{8213} b_4^2 b_7 - \frac{19}{96} \mathcal{E}_4 b_4 g_7 - \frac{25}{24} \mathcal{E}_4 b_4 b_7. \end{align*} Therefore we can abbreviate $h_9 = h_9^{(1)}, h_{11} = h_{11}^{(1)}, h_{15} = h_{15}^{(1)}.$ Finally we obtain theorem 2 from the introduction:\\

\noindent \textbf{Theorem 2.} \emph{Let $h_8 = \frac{g_6^2}{b_4}$, $h_9 = \frac{g_6 g_7}{b_4}$, $h_{11} = \frac{g_6^2 g_7}{b_4^2}$, $h_{14} = \frac{g_5 b_{13}}{b_4}$, $h_{15} = \frac{g_6 b_{13}}{b_4}$, $h_{16} = \frac{g_7 b_{13}}{b_4}$. The graded ring of paramodular forms of level $7$ is minimally presented by the generators $$\mathcal{E}_4,b_4,g_5,\mathcal{E}_6,b_6,g_6,b_7,g_7,g_8,h_8,b_9,h_9,b_{10},g_{10},h_{11},b_{12}^{sym},b_{12}^{anti},b_{13},h_{14},h_{15},h_{16}$$ of weights $4,4,5,6,6,6,7,7,8,8,9,9,10,10,11,12,12,13,14,15,16$ and by 144 relations in weights $10$ through $32$.}
\begin{proof} The relations are computed as in the proof of Theorem 1. Here we use the Hilbert series $$\mathrm{Hilb} \, M_*(K(7)) = \sum_{k=0}^{\infty} \mathrm{dim}\, M_k(K(7)) = \frac{P(t)}{(1 - t^4)^2 (1-t^6)(1 - t^{12})},$$ where $$P(t) = 1 + t^5 + 2(t^6 + t^7 + ... + t^{22} + t^{23}) + t^{24} + t^{29},$$ which can be derived from Ibukiyama's formula \cite{I} for $\mathrm{dim}\, S_k(K(p))$, $k \ge 5$.
\end{proof}

\begin{cor} $M_*(K(7))$ is a Gorenstein graded ring which is not a complete intersection.
\end{cor}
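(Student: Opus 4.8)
The plan is to follow verbatim the strategy used for $M_*(K(5))$ in the previous corollary. Since $X_{K(7)}$ is three-dimensional, the ring $M_*(K(7))$ has Krull dimension $4$, and the shape of its Hilbert series, with denominator $(1-t^4)^2(1-t^6)(1-t^{12})$, suggests seeking a homogeneous system of parameters consisting of four forms of weights $4,4,6,12$. A natural candidate is the sequence $\mathcal{E}_4,b_4,\mathcal{E}_6,b_{12}^{anti}$ (one could equally try $b_{12}^{sym}$ in the last slot). First I would verify in Macaulay2 that this sequence is both a homogeneous system of parameters and an $M_*(K(7))$-regular sequence; since its length equals the Krull dimension, this establishes that $M_*(K(7))$ is Cohen--Macaulay by Proposition 6.7 of \cite{BG}. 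This is cheaper than testing Cohen--Macaulayness directly via Algorithm 5.2 of \cite{SW}, and it reuses the same machinery as the level $5$ computation.

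Once Cohen--Macaulayness is established, the Gorenstein and complete-intersection properties can be read off the numerator of the Hilbert series by Stanley's theorem (\cite{S}, Corollary 3.3 and Theorem 4.4). Setting
$$P(t) = (1-t^4)^2(1-t^6)(1-t^{12})\cdot \mathrm{Hilb}\,M_*(K(7)) = 1 + t^5 + 2(t^6 + \dots + t^{23}) + t^{24} + t^{29},$$
the ring is Gorenstein if and only if $P(t)$ is palindromic. One checks this directly: $P$ has degree $29$; the coefficients of $t^0$ and $t^{29}$ agree (both $1$), those of $t^5$ and $t^{24}$ agree (both $1$), the block of coefficients equal to $2$ runs symmetrically from $t^6$ to $t^{23}$ (and $29-6=23$), and all remaining coefficients vanish. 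Hence $M_*(K(7))$ is Gorenstein.

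For the complete-intersection property, Stanley's criterion states that a Cohen--Macaulay graded ring is a complete intersection precisely when its numerator $P(t)$ factors as a product of cyclotomic polynomials (this is because the numerator of a complete intersection is a product of terms $1-t^{c_k}$, each of which splits into cyclotomics). I would therefore argue that $P(t)$ admits no such factorization. In practice this follows at once from the rational factorization of $P(t)$, computed by hand or in Macaulay2, which exhibits a non-cyclotomic irreducible factor. This completes the proof.

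The main obstacle I expect is the Macaulay2 verification that the four chosen forms form a genuine regular sequence: with $21$ generators and $144$ relations, the presentation is substantially larger than in the level $5$ case, so some care is needed both in choosing a sequence that is actually regular and in ensuring the Gr\"obner-basis computation terminates reliably. Everything after that---the palindrome check and the cyclotomic-factorization check---is routine and purely arithmetic.
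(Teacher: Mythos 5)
Your proposal is correct and follows essentially the same route as the paper: guessing the homogeneous system of parameters $\mathcal{E}_4,b_4,\mathcal{E}_6,b_{12}^{anti}$ from the denominator of the Hilbert series, verifying regularity in Macaulay2 to get Cohen--Macaulayness, and then reading off Gorenstein (palindromic numerator) and non-complete-intersection (no cyclotomic factorization) from Stanley's criteria. The palindrome check you spell out is accurate and the whole argument matches the paper's proof.
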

\begin{proof} This is essentially the same as the proof for $M_*(K(5))$. One can guess the sequence of modular forms $\mathcal{E}_4,b_4,\mathcal{E}_6,b_{12}^{anti}$ (which is suggested by the denominator of the Hilbert series) and verify that it is a homogeneous system of parameters and a $M_*(K(7))$-sequence. (Verifying this is much faster in Macaulay2 than computing even the Krull dimension of $M_*(K(7))$ naively.) The rest of the claim can be read off of the Hilbert polynomial $P(t)$ by \cite{S}.
\end{proof}

\bibliographystyle{plainnat}
\bibliofont
\bibliography{\jobname}

\begin{thebibliography}{29}
\providecommand{\natexlab}[1]{#1}
\providecommand{\url}[1]{\texttt{#1}}
\expandafter\ifx\csname urlstyle\endcsname\relax
  \providecommand{\doi}[1]{doi: #1}\else
  \providecommand{\doi}{doi: \begingroup \urlstyle{rm}\Url}\fi

\bibitem[Aoki and Ibukiyama(2005)]{AI}
Hiroki Aoki and Tomoyoshi Ibukiyama.
\newblock Simple graded rings of {S}iegel modular forms, differential operators
  and {B}orcherds products.
\newblock \emph{Internat. J. Math.}, 16\penalty0 (3):\penalty0 249--279, 2005.
\newblock ISSN 0129-167X.
\newblock \doi{10.1142/S0129167X05002837}.
\newblock URL \url{https://doi.org/10.1142/S0129167X05002837}.

\bibitem[Borcherds(1998)]{B}
Richard Borcherds.
\newblock Automorphic forms with singularities on {G}rassmannians.
\newblock \emph{Invent. Math.}, 132\penalty0 (3):\penalty0 491--562, 1998.
\newblock ISSN 0020-9910.
\newblock \doi{10.1007/s002220050232}.
\newblock URL \url{http://dx.doi.org/10.1007/s002220050232}.

\bibitem[Breeding~II et~al.(2016)Breeding~II, Poor, and Yuen]{BPY}
Jeffery Breeding~II, Cris Poor, and David Yuen.
\newblock Computations of spaces of paramodular forms of general level.
\newblock \emph{J. Korean Math. Soc.}, 53\penalty0 (3):\penalty0 645--689,
  2016.
\newblock ISSN 0304-9914.
\newblock \doi{10.4134/JKMS.j150219}.
\newblock URL \url{https://doi.org/10.4134/JKMS.j150219}.

\bibitem[Bruinier(2002)]{B2}
Jan Bruinier.
\newblock On the rank of {P}icard groups of modular varieties attached to
  orthogonal groups.
\newblock \emph{Compositio Math.}, 133\penalty0 (1):\penalty0 49--63, 2002.
\newblock ISSN 0010-437X.
\newblock \doi{10.1023/A:1016357029843}.
\newblock URL \url{http://dx.doi.org/10.1023/A:1016357029843}.

\bibitem[Bruinier and Bundschuh(2003)]{BB}
Jan Bruinier and Michael Bundschuh.
\newblock On {B}orcherds products associated with lattices of prime
  discriminant.
\newblock \emph{Ramanujan J.}, 7\penalty0 (1-3):\penalty0 49--61, 2003.
\newblock ISSN 1382-4090.
\newblock \doi{10.1023/A:1026222507219}.
\newblock URL \url{http://dx.doi.org/10.1023/A:1026222507219}.
\newblock Rankin memorial issues.

\bibitem[Bruns and Gubeladze(2009)]{BG}
Winfried Bruns and Joseph Gubeladze.
\newblock \emph{Polytopes, rings, and {$K$}-theory}.
\newblock Springer Monographs in Mathematics. Springer, Dordrecht, 2009.
\newblock ISBN 978-0-387-76355-2.
\newblock \doi{10.1007/b105283}.
\newblock URL \url{https://doi.org/10.1007/b105283}.

\bibitem[Cl\'{e}ry and Gritsenko(2011)]{CG}
Fabien Cl\'{e}ry and Valeri Gritsenko.
\newblock Siegel modular forms of genus 2 with the simplest divisor.
\newblock \emph{Proc. Lond. Math. Soc. (3)}, 102\penalty0 (6):\penalty0
  1024--1052, 2011.
\newblock ISSN 0024-6115.
\newblock \doi{10.1112/plms/pdq036}.
\newblock URL \url{https://doi.org/10.1112/plms/pdq036}.

\bibitem[Dern(2002)]{Dern2}
Tobias Dern.
\newblock Paramodular forms of degree 2 and level 3.
\newblock \emph{Comment. Math. Univ. St. Paul.}, 51\penalty0 (2):\penalty0
  157--194, 2002.
\newblock ISSN 0010-258X.

\bibitem[Dern and Krieg(2004)]{DK2}
Tobias Dern and Aloys Krieg.
\newblock The graded ring of {H}ermitian modular forms of degree 2 over {${\Bbb
  Q}(\sqrt{-2})$}.
\newblock \emph{J. Number Theory}, 107\penalty0 (2):\penalty0 241--265, 2004.
\newblock ISSN 0022-314X.
\newblock \doi{10.1016/j.jnt.2003.10.007}.
\newblock URL \url{https://doi.org/10.1016/j.jnt.2003.10.007}.

\bibitem[Dolgachev(2014)]{Dolg}
Igor Dolgachev.
\newblock Endomorphisms of complex abelian varieties.
\newblock Lecture notes, Milan, February, 2014.
\newblock URL \url{http://www.math.lsa.umich.edu/~idolga/MilanLect.pdf}.

\bibitem[Eichler and Zagier(1985)]{EZ}
Martin Eichler and Don Zagier.
\newblock \emph{The theory of {J}acobi forms}, volume~55 of \emph{Progress in
  Mathematics}.
\newblock Birkh\"auser Boston, Inc., Boston, MA, 1985.
\newblock ISBN 0-8176-3180-1.
\newblock \doi{10.1007/978-1-4684-9162-3}.
\newblock URL \url{http://dx.doi.org/10.1007/978-1-4684-9162-3}.

\bibitem[Freitag and Schneider(1967)]{FS}
Eberhard Freitag and Volker Schneider.
\newblock Bemerkung zu einem {S}atz von {J}. {I}gusa und {W}. {H}ammond.
\newblock \emph{Math. Z.}, 102:\penalty0 9--16, 1967.
\newblock ISSN 0025-5874.
\newblock \doi{10.1007/BF01110281}.
\newblock URL \url{https://doi.org/10.1007/BF01110281}.

\bibitem[Gehre(2012)]{G}
Dominic Gehre.
\newblock Quaternionic modular forms of degree two over $\mathbb{Q}(-3,-1)$.
\newblock Dissertation (advisers A. Krieg, S. Krau\ss har), RWTH Aachen, 2012.
\newblock URL
  \url{http://publications.rwth-aachen.de/record/211801/files/4403.pdf}.

\bibitem[Gritsenko and Hulek(1999)]{GH2}
Valeri Gritsenko and Klaus Hulek.
\newblock The modular form of the {B}arth-{N}ieto quintic.
\newblock \emph{Internat. Math. Res. Notices}, \penalty0 (17):\penalty0
  915--937, 1999.
\newblock ISSN 1073-7928.
\newblock \doi{10.1155/S1073792899000483}.
\newblock URL \url{https://doi.org/10.1155/S1073792899000483}.

\bibitem[Gritsenko and Nikulin(1998)]{GN2}
Valeri Gritsenko and Viacheslav Nikulin.
\newblock Automorphic forms and {L}orentzian {K}ac-{M}oody algebras. {II}.
\newblock \emph{Internat. J. Math.}, 9\penalty0 (2):\penalty0 201--275, 1998.
\newblock ISSN 0129-167X.
\newblock \doi{10.1142/S0129167X98000117}.
\newblock URL \url{https://doi.org/10.1142/S0129167X98000117}.

\bibitem[Gritsenko et~al.(2013)Gritsenko, Hulek, and Sankaran]{GHS}
Valeri Gritsenko, Klaus Hulek, and Gregory Sankaran.
\newblock Moduli of {K}3 surfaces and irreducible symplectic manifolds.
\newblock In \emph{Handbook of moduli. {V}ol. {I}}, volume~24 of \emph{Adv.
  Lect. Math. (ALM)}, pages 459--526. Int. Press, Somerville, MA, 2013.

\bibitem[Gundlach(1963)]{Gu}
Karl-Bernhard Gundlach.
\newblock Die {B}estimmung der {F}unktionen zur {H}ilbertschen {M}odulgruppe
  des {Z}ahlk\"orpers {$\mathbb{Q}(\sqrt{5})$}.
\newblock \emph{Math. Ann.}, 152:\penalty0 226--256, 1963.
\newblock ISSN 0025-5831.
\newblock \doi{10.1007/BF01470882}.
\newblock URL \url{https://doi.org/10.1007/BF01470882}.

\bibitem[Hammond(1966)]{H}
William Hammond.
\newblock The modular groups of {H}ilbert and {S}iegel.
\newblock \emph{Amer. J. Math.}, 88:\penalty0 497--516, 1966.
\newblock ISSN 0002-9327.
\newblock \doi{10.2307/2373204}.
\newblock URL \url{https://doi.org/10.2307/2373204}.

\bibitem[Ibukiyama(1985)]{I}
Tomoyoshi Ibukiyama.
\newblock On relations of dimensions of automorphic forms of {${\rm Sp}(2,{\bf
  R})$} and its compact twist {${\rm Sp}(2)$}. {I}.
\newblock In \emph{Automorphic forms and number theory ({S}endai, 1983)},
  volume~7 of \emph{Adv. Stud. Pure Math.}, pages 7--30. North-Holland,
  Amsterdam, 1985.

\bibitem[Ibukiyama and Onodera(1997)]{IO}
Tomoyoshi Ibukiyama and Fumie Onodera.
\newblock On the graded ring of modular forms of the {S}iegel paramodular group
  of level {$2$}.
\newblock \emph{Abh. Math. Sem. Univ. Hamburg}, 67:\penalty0 297--305, 1997.
\newblock ISSN 0025-5858.
\newblock \doi{10.1007/BF02940837}.
\newblock URL \url{https://doi.org/10.1007/BF02940837}.

\bibitem[Igusa(1962)]{Ig2}
Jun-ichi Igusa.
\newblock On {S}iegel modular forms of genus two.
\newblock \emph{Amer. J. Math.}, 84:\penalty0 175--200, 1962.
\newblock ISSN 0002-9327.
\newblock \doi{10.2307/2372812}.
\newblock URL \url{https://doi.org/10.2307/2372812}.

\bibitem[Kumar(2015)]{K}
Abhinav Kumar.
\newblock Hilbert modular surfaces for square discriminants and elliptic
  subfields of genus 2 function fields.
\newblock \emph{Res. Math. Sci.}, 2:\penalty0 Art. 24, 46, 2015.
\newblock ISSN 2522-0144.
\newblock \doi{10.1186/s40687-015-0042-9}.
\newblock URL \url{https://doi.org/10.1186/s40687-015-0042-9}.

\bibitem[Marschner(2004)]{M}
Axel Marschner.
\newblock Paramodular forms of degree $2$ with particular emphasis on level
  $t=5$.
\newblock Dissertation (advisers A. Krieg, J. M\"uller), RWTH Aachen, 2004.
\newblock URL
  \url{http://publications.rwth-aachen.de/record/59634/files/Marschner_Axel.pdf}.

\bibitem[Marschner and M\"{u}ller(2009)]{MM}
Axel Marschner and J\"{u}rgen M\"{u}ller.
\newblock On a certain algebra of higher modular forms.
\newblock \emph{Algebra Colloq.}, 16\penalty0 (3):\penalty0 371--380, 2009.
\newblock ISSN 1005-3867.
\newblock \doi{10.1142/S1005386709000364}.
\newblock URL \url{https://doi.org/10.1142/S1005386709000364}.

\bibitem[Poor and Yuen(2000)]{PY}
Cris Poor and David Yuen.
\newblock Linear dependence among {S}iegel modular forms.
\newblock \emph{Math. Ann.}, 318\penalty0 (2):\penalty0 205--234, 2000.
\newblock ISSN 0025-5831.
\newblock \doi{10.1007/s002080000083}.
\newblock URL \url{https://doi.org/10.1007/s002080000083}.

\bibitem[Poor et~al.(2018)Poor, Shurman, and Yuen]{PSY}
Cris Poor, Jerry Shurman, and David Yuen.
\newblock Finding all {B}orcherds product paramodular cusp forms of a given
  weight and level.
\newblock Preprint, 2018.
\newblock URL \url{https://arxiv.org/abs/1803.11092}.

\bibitem[Stanley(1978)]{S}
Richard Stanley.
\newblock Hilbert functions of graded algebras.
\newblock \emph{Advances in Math.}, 28\penalty0 (1):\penalty0 57--83, 1978.
\newblock ISSN 0001-8708.
\newblock \doi{10.1016/0001-8708(78)90045-2}.
\newblock URL \url{https://doi.org/10.1016/0001-8708(78)90045-2}.

\bibitem[Sturmfels and White(1991)]{SW}
Bernd Sturmfels and Neil White.
\newblock Computing combinatorial decompositions of rings.
\newblock \emph{Combinatorica}, 11\penalty0 (3):\penalty0 275--293, 1991.
\newblock ISSN 0209-9683.
\newblock \doi{10.1007/BF01205079}.
\newblock URL \url{https://doi.org/10.1007/BF01205079}.

\bibitem[Williams(2018)]{W}
Brandon Williams.
\newblock Computing modular forms for the {W}eil representation.
\newblock Dissertation (adviser R. Borcherds), UC Berkeley, 2018.

\end{thebibliography}

\end{document}